\newtheorem{thm}{Theorem}[section]
\newtheorem{cor}[thm]{Corollary}
\newtheorem{lem}[thm]{Lemma}
\newtheorem{prop}[thm]{Proposition}
\theoremstyle{definition}
\theoremstyle{remark}
\newtheorem{rmk}[thm]{Remark}
\newtheorem{example}[thm]{Example}
\numberwithin{equation}{section}
\newcommand{\N}{\mathbb{N}}
\newcommand{\Z}{\mathbb{Z}}
\newcommand{\E}{\mathcal{E}}
\newcommand{\F}{\mathcal{F}}
\newcommand{\G}{\mathcal{G}}
\newcommand{\OO}{\mathcal{O}}
\newcommand{\NN}{\N}
\newcommand{\ZZ}{\Z}
\newcommand{\RR}{\mathbb{R}}
\newcommand{\TT}{\mathbb{T}}
\def\Gg{\mathcal{G}}
\def\Tt{\mathcal{T}}
\newcommand{\CC}{\mathbb{C}}
\newcommand{\KMS}{\text{KMS}}
\newcommand{\TC}{\mathcal{T}C}
\newcommand{\Aut}{\operatorname{Aut}}
\newcommand{\clsp}{\overline{\text{span}}}
\newcommand{\PF}{\kappa}
\def\clsp{\operatorname{\overline{span}}}
\def\MCE{\operatorname{MCE}}
\def\Ext{\operatorname{Ext}}
\def\Per{\operatorname{Per}}
\begin{document}

\author[an Huef]{Astrid an Huef}
\author[Kang]{Sooran Kang}
\author[Raeburn]{Iain Raeburn}
\address{Department of Mathematics and Statistics, University of Otago, PO Box 56, Dunedin 9054, New Zealand.}
\email{\{astrid, sooran, iraeburn\}@maths.otago.ac.nz}
\title[Spatial realisations of KMS states]{\boldmath Spatial realisations of KMS states\\ on the $C^*$-algebras of higher-rank graphs}

\thanks{This research has been supported by 
the Marsden Fund of the Royal Society of New Zealand. We thank Aidan Sims for helpful conversations.}

\begin{abstract}
Several authors have recently been studying the equilibrium or KMS states on the Toeplitz algebras of finite higher-rank graphs. For graphs of rank one (that is, for ordinary directed graphs), there is a natural dynamics obtained by lifting the gauge action of the circle  to an action of the real line. The algebras of higher-rank graphs carry a gauge action of a higher-dimensional torus, and there are many potential dynamics arising from different embeddings of the real line in the torus.  Previous results show that there is nonetheless a ``preferred dynamics'' for which the system exhibits a particularly satisfactory phase transition, and that the unique KMS state at the critical inverse temperature can then be implemented by intregrating vector states against a measure on the infinite path space of the graph. Here we obtain a similar description of the KMS state at the critical inverse temperature for other dynamics. Our spatial implementation is given by integrating against a measure on a space of paths which are infinite in some directions but finite in others. Our results are sharpest for the algebras of rank-two graphs.
\end{abstract}
\date{1 October 2014}
\maketitle

\section{Introduction}

There has recently been renewed interest in the KMS states of dynamical systems associated to directed graphs \cite{KW, aHLRS1, aHLRSseq, CL} and their higher-rank analogues \cite{Yang1, Yang2,aHLRS2, aHLRS3}. For systems based on the Toeplitz algebra of the graph, there is a simplex of KMS$_\beta$ states at each inverse temperature $\beta$ larger than a critical value $\beta_c$; under additional hypotheses on the graph, this simplex collapses to a single KMS state at inverse temperature $\beta_c$. This last state often factors through a state of the graph algebra of the graph, which is then the only  KMS state of the graph algebra.

Both the Toeplitz algebra and graph algebra of a directed graph $E$ carry a natural gauge action of the circle $\TT$ which lifts via $t\mapsto e^{it}$ to a natural dynamics, and the results in \cite{KW, aHLRS1, aHLRSseq} are about this dynamics for finite $E$ (more general dynamics have been studied in \cite{EL,CL,IK}, for example). The critical inverse temperature $\beta_c$ is given in terms of  the spectral radius $\rho(A)$ of the vertex matrix $A$ of the graph by $\beta_c=\ln\rho(A)$ (this goes back to \cite{EFW}). 

For a higher-rank graph $\Lambda$ of rank $k$, the gauge action is an action of the $k$-torus $\TT^k$, and to get a dynamics we have to choose an embedding of the real line $\RR$ in $\TT^k$. The graph $\Lambda$ has $k$ vertex matrices $\{A_j:1\leq j\leq k\}$, and if the embedding is given by $t\mapsto e^{itr}$ for some $r\in (0,\infty)^k$, the critical inverse temperature is $\beta_c=\max_j\{r_j^{-1}\ln\rho(A_j)\}$. For $\beta>\beta_c$, the dynamics on the Toeplitz algebra again admits a simplex of KMS$_\beta$ states \cite[Theorem~6.1]{aHLRS2}. At $\beta=\beta_c$ it matters what $r$ is. The best results in \cite{aHLRS2} and \cite{aHLRS3} concern a \emph{preferred dynamics} in which $r= \big(\ln\rho(A_1),\dots,\ln\rho(A_k)\big)$, and for which we have $\beta_c=1=r_j^{-1}\ln\rho(A_j)$ for all $j$. Under strong irreducibility hypotheses on the graph, there is then a unique KMS$_1$ state on the Toeplitz algebra and on the graph algebra \cite[Theorem~7.2]{aHLRS2}; for more general graphs, uniqueness requires aperiodicity of the graph \cite[Corollary~10.3]{aHLRS3}.
 
Graph and Toeplitz algebras have large commutative subalgebras $D$ generated by range projections, and we expect, both from previous studies \cite{EL,LR, LN} and from general results in \cite{N}, that KMS states should be given by integrating vector states against measures on the spectrum of $D$. For $\beta>\beta_c$, this is indeed the case: the KMS$_\beta$ states are constructed in \cite[\S6]{aHLRS2} as weighted sums of vector states in the Toeplitz representation on the finite-path space, and the weights give atomic measures with the required property (see Remark~\ref{atomicmeas}). For the preferred dynamics, where $\beta_c=1$, Proposition~10.2 of \cite{aHLRS3} describes a measure on the infinite-path space such that the KMS$_1$ state is an integral of vector states for the infinite-path representation, and indeed that result was needed in \cite{aHLRS3} to prove existence for periodic graphs.  So finding such measures seems an interesting and potentially useful enterprise. In this paper we construct suitable measures for other dynamics, in which $\beta_c=r_j^{-1}\ln\rho(A_j)$ for some but not all $j$.

Suppose that $\Lambda$ is a finite $k$-graph. The finite-path space is just $\Lambda$ itself in the discrete topology. The infinite-path space $\Lambda^\infty$ consists of functors $x$ from a model graph based on $\N^k$ into $\Lambda$  \cite[\S2]{KP}, and has a compact Hausdorff topology (because $\Lambda$ is finite). Both path spaces sit naturally in the spectrum of $D$, but they are not all of it by any means: there are many ways to go to infinity in $\N^k$, and  $\Lambda^\infty$ is the part of the boundary in which we have gone to infinity in every direction. We will focus on $K:=\{j:r_j^{-1}\ln\rho(A_j)=\beta_c\}$, and our measures will live on the part of the boundary where we have gone to infinity in the directions in $K$, and not in the others. We work with some concretely defined \emph{semi-infinite path spaces}, instead of working explicitly inside the spectrum of $D$. 

\medskip
We begin with a section on preliminary material. We briefly review facts about KMS states and results from Perron-Frobenius theory that we later rely on. We then set out our conventions for higher-rank graphs and their vertex matrices, and  discuss the Toeplitz-Cuntz-Krieger algebra and $C^*$-algebra of a higher-rank graph. At the end of \S\ref{TCK-defn}, we discuss the dynamics $\alpha^r$ which we will be using throughout the paper.  In \S\ref{semiinf}  we investigate the full path space $W_\Lambda$ of a higher-rank graph $\Lambda$, building on the recent work of Webster \cite{W}. In particular, we discuss the semi-infinite path spaces, and realisations of certain subsets as inverse limits which we will use to build measures. We then discuss the semi-infinite path representations that we use in our spatial realisations of KMS states.

We begin our analysis of KMS states in \S\ref{sec-kms-states} by looking at KMS$_\beta$ states on Toeplitz algebras above the critical inverse temperature. The main analysis of these states remains that of \cite[Theorem~6.1]{aHLRS2}, but we make some minor improvements to the general results. Then in Remarks~\ref{atomicmeas} and \ref{nolimit} we motivate our later analysis by describing a spatial realisation for $\beta>\beta_c$, and examining why it breaks down at $\beta=\beta_c$. Our main results are formulated in Theorem~\ref{main_thm}, and most of \S\ref{sec-spatial} is devoted to its proof. In \S\ref{secnogaugeexpectation}, we consider another spatial construction of KMS states which works when the set $K=\{j:r_j^{-1}\ln\rho(A_j)=\beta_c\}$ is a singleton, and in particular for any non-preferred dynamics on the Toeplitz algebra of a $2$-graph. This is itself of some interest, since many of the most interesting examples of higher-rank graph algebras are those of $2$-graphs \cite{DY, PRRS, PRW}.

For the preferred dynamics, the KMS states on the Toeplitz algebra at critical inverse temperatures factor through the quotient map onto the graph algebra. Our KMS states factor through the quotient which imposes the Cuntz-Krieger relations for degrees in $\N^K$ (Proposition~\ref{KMS_combine}). This quotient looks rather like the relative graph algebras of Muhly and Tomforde \cite{MT}, but not at first sight like the relative higher-rank graph algebras of Sims \cite{S1}. In Appendix~\ref{App:RCK}, we confirm that it is one of Sims' relative algebras. In our final Appendix~\ref{App:nesh}, we reconcile our results with Neshveyev's general machine for computing KMS states on groupoid algebras \cite{N}. Unfortunately, to do this we need an appropriate groupoid model for the Toeplitz algebras, and this does not seem to be explicitly available in the literature. So we provide one here, by adapting results of Yeend \cite{Y}, and then show that our measure is the quasi-invariant measure predicted by Neshveyev's theorem.

\section{Background material}\label{sec-background}

\subsection{KMS states}\label{secKMS}
Suppose that $(A,\alpha)$ is a dynamical system consisting of an action $\alpha$ of $\RR$ on a $C^*$-algebra $A$. As in \cite{BR,Ped}, we say that $a\in A$ is \emph{analytic} for $\alpha$ if the function $t\mapsto\alpha_t(a)$ extends to an analytic function $z\mapsto \alpha_z(a)$ on $\CC$ (and then that extension is automatically unique).
A state $\phi$ on $A$ is a \emph{KMS state with inverse temperature} $\beta$ (or a \emph{$\KMS_\beta$} state) of $(A,\alpha)$ if
\[
\phi(ab)=\phi(b\alpha_{i\beta}(a))
\]
for all analytic elements $a,b$. Proposition~8.12.3 of \cite{Ped} implies that it suffices to check the $\KMS$ condition on a set of analytic elements which span a dense subspace of $A$. In this paper, we are only interested in KMS$_\beta$ states with inverse temperature $\beta\in (0,\infty)$.

The following simple lemma will be handy when we want to normalise our dynamics. It says that changing the unit of time does not affect the behaviour of the system in any material way.

\begin{lem}\label{scaledynamics}
Suppose that $\alpha:\RR\to\Aut A$ is an action of $\RR$ on a $C^*$-algebra $A$ and that $\phi$ is a KMS$_\beta$ state of $(A, \alpha)$. Let $d\in (0,\infty)$ and define $\alpha':\RR\to\Aut A$ by $\alpha'_t=\alpha_{td}$. Then $\phi$ is a KMS$_{d^{-1}\beta}$ state of $(A, \alpha')$. 
\end{lem}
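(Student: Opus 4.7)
The plan is straightforward: rescaling time by a positive factor $d$ rescales the temperature parameter by the same factor, so everything reduces to unwinding definitions.

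First I would verify that the class of analytic elements for $\alpha'$ coincides with that for $\alpha$. If $a$ is analytic for $\alpha$, with entire extension $z \mapsto \alpha_z(a)$, then $z \mapsto \alpha_{dz}(a)$ is also entire (composition of an entire function with the multiplication-by-$d$ map), and on the real axis it agrees with $t \mapsto \alpha_{td}(a) = \alpha'_t(a)$. By uniqueness of analytic extensions, $a$ is analytic for $\alpha'$ with $\alpha'_z(a) = \alpha_{dz}(a)$. The same argument in reverse (using $d^{-1}$) gives the converse, so the two classes of analytic elements are equal, and moreover they span a dense subspace of $A$ (by the standard smoothing argument recalled just above the lemma).

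Next, take any analytic elements $a, b \in A$ for $\alpha$ (equivalently, for $\alpha'$). Using the KMS$_\beta$ condition for $\phi$ with respect to $\alpha$ together with the identity $\alpha'_z(a) = \alpha_{dz}(a)$, I compute
\[
\phi(ab) = \phi\bigl(b\,\alpha_{i\beta}(a)\bigr) = \phi\bigl(b\,\alpha'_{i\beta/d}(a)\bigr) = \phi\bigl(b\,\alpha'_{i(d^{-1}\beta)}(a)\bigr).
\]
This is exactly the KMS$_{d^{-1}\beta}$ condition for $\phi$ with respect to $\alpha'$, checked on a set of analytic elements whose linear span is dense. By the sufficiency criterion cited from \cite[Proposition~8.12.3]{Ped} in the paragraph preceding the lemma, $\phi$ is a KMS$_{d^{-1}\beta}$ state of $(A, \alpha')$.

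There is essentially no obstacle here; the only point requiring a moment's care is the identification of analytic elements and the corresponding identity $\alpha'_z = \alpha_{dz}$ on them, and this follows immediately from uniqueness of analytic continuation. The rest is a one-line substitution in the KMS identity.
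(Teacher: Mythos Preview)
Your proof is correct. The paper states this lemma without proof, treating it as routine; your argument supplies exactly the expected details (identifying the analytic elements for $\alpha'$ via $\alpha'_z=\alpha_{dz}$ and then substituting into the KMS identity), so there is nothing to compare.
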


\subsection{The Perron-Frobenius theorem}
Let $X$ be a finite set. A matrix $A\in M_X(\CC)$ is \emph{irreducible} if for all $v,w\in X$ there exists $n\in\NN$ such that $A^n(v,w)\ne 0$. We say that a matrix is positive (non-negative) if all its entries are positive (non-negative). 

Let $A\in M_X([0,\infty))$ be an irreducible non-negative matrix. The Perron-Frobenius theorem says that the spectral radius $\rho(A)$ of  $A$ is an
eigenvalue of $A$ with a $1$-dimensional eigenspace and a  positive eigenvector (see, for example,
\cite[Theoren~1.5]{Sen}). We call
the unique  positive eigenvector with eigenvalue $\rho(A)$ and unit $1$-norm  the
\emph{unimodular Perron-Frobenius eigenvector of $A$}.
A vector $\epsilon\in [0,\infty)^X$ is \emph{subinvariant} for $A$ and $t \in \RR$ if $A\epsilon\le t \epsilon$.
The subinvariance theorem \cite[Theorem~1.6]{Sen} says that if a vector $\epsilon\in [0,\infty)^X$ is subinvariant for $A$ and a positive real number $t$, then all the entries of $\epsilon$ are positive and $t\ge \rho(A)$; moreover, $t=\rho(A)$ if and only if $A\epsilon= t\epsilon$. 

\subsection{Higher-rank graphs and their vertex matrices}
Let $k\in\N$ with $k\geq 1$. We write $e_1,\dots,e_k$ for the generators of $\NN^k$ and $n_i$ for the $i^{th}$ coordinate of $n\in (\NN \cup \{\infty\})^k$. For $m,n\in (\NN \cup \{\infty\})^k$ we write $m\leq n$ if and only if $m_i\leq n_i$ for $1\leq i\leq k$.

Let $\Lambda$ be a $k$-graph with vertex set $\Lambda^0$ and degree functor $d:\Lambda\to \NN^k$, as in \cite{KP}.  
 We say that $\Lambda$ is \emph{finite} if $\Lambda^n:=d^{-1}(n)$ is finite for all $n\in\NN^k$. Except in the appendices, we consider only finite $k$-graphs in this paper.
For $v, w\in \Lambda^0$ and $n\in\NN^k$, we write, for example,  \[v\Lambda:=\{\lambda\in\Lambda: r(\lambda)=v\}\text{\ \ and\ \ }v\Lambda^n w :=\{\lambda\in\Lambda^n : r(\lambda)=v, s(\lambda)=w\}.\]
We say that $\Lambda$ has no sources if $v\Lambda^n\ne \emptyset$ for every $v\in\Lambda^0$ and $n\in\NN^k$.

\begin{example}\label{Omega}
Let $\Omega_k^0:=\NN^k$, $\Omega_k:=\{(p,q)\in \NN^k\times\NN^k : p\leq q\}$, define $r,s:\Omega_k\to \Omega_k^0$ by $r(p,q):=p$ and $s(p,q):=q$, define composition by $(p,q)(q,r)=(p,r)$, and define $d:\Omega_k\to\NN^k$ by $d(p,q):=q-p$. Then $\Omega_k$ is a $k$-graph with no sources.

For $n \in (\NN \cup \{\infty\})^k$ we denote by $\Omega_{k,n}$ the subgraph $\{(p,q) \in \NN^k\times \NN^k : q \le n\}$ of $\Omega_k$. 
\end{example}

For $1\le i\le k$, let  $A_i$ be the the matrix in $M_{\Lambda^0}(\NN)$ with entries $A_i(v,w)=|v\Lambda^{e_i}w|$; we call the $A_i$ the \emph{vertex matrices} of $\Lambda$. Since $(A_iA_j)(v,w)=|v\Lambda^{e_i+e_j}w|$, the factorisation property of $\Lambda$ implies that $A_iA_j=A_jA_i$, and we define
\[
A^n:=\prod_{i=1}^k A_i^{n_i}\quad\text{for $n\in\NN^k$. }
\] 
We say that $\Lambda$ is \emph{coordinatewise irreducible} if vertex matrix $A_i$ is irreducible for $1\le i\le k$. If $\Lambda$ is coordinatewise irreducible, then \cite[Lemma~2.1]{aHLRS2} implies that the unimodular Perron-Frobenius eigenvectors of the $A_i$  are all equal, and we call it the \emph{common Perron-Frobenius eigenvector of $\Lambda$}. We write $\rho(\Lambda)$ for the vector $\big(\rho(A_1),\dots,\rho(A_k)\big)$.

We visualise $k$-graphs as coloured graphs, by choosing $k$ different colours $c_1,\dots,c_k$, and viewing paths in $\Lambda^{e_i}$ as edges of colour $c_i$. (See \cite[Chapter~10]{R} for a discussion of how this relates to the factorisation property, and \cite[\S3]{HRSW} for details of the relationship between a $k$-graph and its underlying coloured graph.) When $k=2$, we view edges in $\Lambda^{e_1}$ as blue, and edges in $\Lambda^{e_2}$ as red.

\subsection{\boldmath The Toeplitz-Cuntz-Krieger $C^*$-algebra of a $k$-graph}\label{TCK-defn} Let $\Lambda$ be a finite $k$-graph with no sources. For $\mu,\nu\in \Lambda$, we write $\Lambda^{\text{min}}(\mu,\nu)$ for the set of $(\eta,\zeta)$ in $\Lambda\times\Lambda$ such that $\mu\eta=\nu\zeta$ and $d(\mu\eta)=d(\mu)\vee d(\nu)$.
 As in \cite{aHLRS2, RS1}, a \emph{Toeplitz-Cuntz-Krieger $\Lambda$-family} consists of partial isometries $\{T_\lambda:\lambda\in\Lambda\}$ such that
\begin{itemize}
\item[(T1)] $\{T_v:v\in\Lambda^0\}$ are mutually orthogonal projections;
\item[(T2)] $T_\lambda T_\mu=T_{\lambda\mu}$ whenever $s(\lambda)=r(\mu)$;
\item[(T3)] $T_\lambda^* T_\lambda=T_{s(\lambda)}$ for all $\lambda\in\Lambda$;
\item[(T4)] for all $v\in\Lambda^0$ and $n\in\NN^k$, we have
\[T_v\ge\sum_{\lambda\in v\Lambda^n}T_\lambda T_\lambda^*;\]
\item[(T5)] for all $\mu,\nu\in\Lambda$, we have
\[T_\mu^* T_\nu=\sum_{(\eta,\zeta)\in\Lambda^{\text{min}}(\mu,\nu)}T_\eta T_\zeta^*,\]
where by convention the sum over the empty set is $0$.
\end{itemize}
A Toeplitz-Cuntz-Krieger $\Lambda$-family is a \emph{Cuntz-Krieger $\Lambda$-family} if in addition we have
\begin{itemize}
\item[(CK)] $T_v=\sum_{\lambda\in v\Lambda^n}T_\lambda T_\lambda^*$ for all $v\in\Lambda^0$ and $n\in\NN^k$.
\end{itemize}

The Toeplitz algebra $\mathcal{T}C^*(\Lambda)$ of a $k$-graph $\Lambda$ is generated by a universal Toeplitz-Cuntz-Krieger $\Lambda$-family $\{t_\lambda:\lambda\in\Lambda\}$, and the standard arguments show that
\[
\Tt C^*(\Lambda)=\clsp\big\{t_\mu t_\nu^*:\mu,\nu\in \Lambda\big\}.
\]
The Cuntz-Krieger algebra $C^*(\Lambda)$ is the quotient of $\mathcal{T}C^*(\Lambda)$ by the ideal generated by
\[
\Big\{ t_v-\sum_{\lambda\in v\Lambda^n}t_\lambda t_\lambda^*: v\in\Lambda, n\in\NN^k\Big\}.
\]
The universal property gives a gauge action $\gamma$ of $\TT^k$ on $\mathcal{T}C^*(\Lambda)$ such that $\gamma_z(t_\lambda)=z^{d(\lambda)}t_\lambda$ (using multi-index notation, so that $z^n=\prod_{i=1}^kz_i^{n_i}$ for $z=(z_1,\dots,z_k)\in \TT^k$ and $n\in \Z^k$).

\section{Semi-infinite path spaces}\label{semiinf}

Let $\Lambda$ be a finite $k$-graph, let  $n\in\NN^k$ and consider the graph $\Omega_{k,n}$ of Example~\ref{Omega}.  Then each  $\lambda\in \Lambda^n$ gives a functor $x_\lambda:\Omega_{k,n}\to\Lambda$, as follows.
Take $p\leq q\leq n$, use the factorisation property to see that there are unique paths $\lambda'\in\Lambda^p$, $\lambda''\in \Lambda^{q-p}$ and $\lambda'''\in \Lambda^{n-q}$ such that $\lambda=\lambda'\lambda''\lambda'''$, and then define $x_\lambda(p,q):=\lambda(p,q):=\lambda''$. The map $\lambda\mapsto x_\lambda$ is a bijection from $\Lambda^n:=d^{-1}(n)$ onto the set of degree-preserving functors from $\Omega_{k,n}$ to $\Lambda$. We use this bijection to identify the two sets, and this identification motivates the definitions of infinite and semi-infinite paths.

Now let  $n \in (\NN \cup \{\infty\})^k$. 
Then we denote by $\Lambda^n$  the set of $k$-graph morphisms from $\Omega_{k,n}$ to $\Lambda$. (When $n\in \NN^k$ we had already identified the set of $k$-graph morphisms from $\Omega_{k,n}$ to $\Lambda$ with $\Lambda^n:=d^{-1}(n)$ in the paragraph above.) We write $d(x) = n$ whenever $x \in \Lambda^n$.  As usual, we write $\Lambda^\infty$ for the infinite-path space $\Lambda^{\infty,\dots,\infty}$ and call its elements \emph{infinite paths}.

We consider the \emph{path space} 
\[W_\Lambda:=\bigcup_{n\in (\NN\cup \{\infty\})^k}\Lambda^n.
\] 
For each $\lambda\in\Lambda$ and finite subset $G$ of $s(\lambda)\Lambda$ we write 
\begin{equation}\label{defn-cylindersets} Z(\lambda):=\{x\in W_\Lambda: x(0,d(\lambda))=\lambda\}\quad\text{and}\quad
Z(\lambda\setminus G):=Z(\lambda)\setminus \big(\textstyle{\bigcup_{\alpha\in G}}Z(\lambda\alpha)\big).
\end{equation}
Theorems 3.1~and~3.2 of \cite{W} show that the
$Z(\lambda \setminus G)$ form a basis for a locally compact Hausdorff
topology on $W_\Lambda$ (see also \cite[\S2]{PW} and \cite[\S3]{FMY}). Webster shows in the proof of \cite[Theorem~3.2]{W} that  $Z(v)$ is compact for $v\in\Lambda^0$.  Since $\Lambda$ is finite, $W_\Lambda=\bigcup_{v\in\Lambda^0} Z(v)$ is  compact.  Then we also have:

\begin{lem}\label{oversight} Let $\lambda\in\Lambda$ and  $G$ be a finite subset of $s(\lambda)\Lambda$.
Then $Z(\lambda\setminus G)$ is compact in $W_\Lambda$.
\end{lem}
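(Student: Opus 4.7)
Since $W_\Lambda$ is compact Hausdorff, it suffices to show that $Z(\lambda\setminus G)$ is closed in $W_\Lambda$. Because $\bigcup_{\alpha\in G}Z(\lambda\alpha)$ is a finite union of basic open sets and so open, this reduces in turn to showing that $Z(\lambda)$ itself is closed. My plan is therefore to take an arbitrary $x\in W_\Lambda\setminus Z(\lambda)$ and produce a basic open neighbourhood of $x$ disjoint from $Z(\lambda)$.

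Set $m:=d(x)\wedge d(\lambda)$ (the coordinate-wise minimum); this lies in $\NN^k$ since $d(\lambda)\in\NN^k$, and $m\le d(x)$ so $\mu:=x(0,m)$ is defined. I first handle the easy case $\mu\neq\lambda(0,m)$: any $y\in Z(\mu)\cap Z(\lambda)$ would satisfy both $y(0,m)=\mu$ (because $y\in Z(\mu)$) and $y(0,m)=\lambda(0,m)$ (applying the factorisation property to $y(0,d(\lambda))=\lambda$), a contradiction. So $Z(\mu)$ already separates $x$ from $Z(\lambda)$.

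The remaining case $\mu=\lambda(0,m)$ is the main obstacle, since $Z(\mu)$ now genuinely meets $Z(\lambda)$ and a further excision is needed. Here, because $x\notin Z(\lambda)$, there must be some $i$ with $m_i<d(\lambda)_i$, and the definition of $m$ forces $m_i=d(x)_i$: so $x$ stops short of $d(\lambda)$ in direction $i$. Setting $\alpha:=\lambda(m,m+e_i)\in s(\mu)\Lambda^{e_i}$, the inequality $d(\mu\alpha)_i=m_i+1>d(x)_i$ gives $x\notin Z(\mu\alpha)$, so $x\in Z(\mu\setminus\{\alpha\})$. On the other hand, the factorisation property applied to any $y\in Z(\mu)\cap Z(\lambda)$ yields $y(0,m+e_i)=\mu\alpha$, so $Z(\mu)\cap Z(\lambda)\subseteq Z(\mu\alpha)$ and therefore $Z(\mu\setminus\{\alpha\})\cap Z(\lambda)=\emptyset$. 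With $Z(\lambda)$ now shown closed, $Z(\lambda\setminus G)$ is closed in the compact space $W_\Lambda$ and hence compact.
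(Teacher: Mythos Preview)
Your proof is correct. Both you and the paper reduce to showing that $Z(\lambda)$ is closed in the compact Hausdorff space $W_\Lambda$, but the arguments diverge from there. The paper takes a sequential approach: given $x_n\to x$ with $x_n\in Z(\lambda)$, it writes $x_n=\lambda y_n$, uses compactness of $Z(s(\lambda))$ to extract a convergent subsequence $y_{n_i}\to y$, and then argues that $\lambda y_{n_i}\to\lambda y$, forcing $x=\lambda y\in Z(\lambda)$ by Hausdorffness. This is short but leans on second-countability of $W_\Lambda$ (so that sequences detect closedness) and on continuity of the concatenation map $z\mapsto\lambda z$. Your argument is instead a direct open-complement proof: for each $x\notin Z(\lambda)$ you explicitly exhibit a basic neighbourhood $Z(\mu)$ or $Z(\mu\setminus\{\alpha\})$ disjoint from $Z(\lambda)$, splitting on whether $x(0,d(x)\wedge d(\lambda))$ agrees with the corresponding initial segment of $\lambda$. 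This is a little longer but is self-contained, avoids the implicit appeals to continuity of concatenation and to sequential closedness, and makes transparent exactly which basic sets witness the separation.
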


\begin{proof}
Since $Z(r(\lambda))$ is compact, it suffices to show $Z(\lambda)$ and $Z(\lambda\setminus G)$  are closed.  Let $\{x_n\}\subset Z(\lambda)$  and $x_n\to x$.  Then $x_n=\lambda y_n$ for $y_n\in Z(s(\lambda))$.  Since $Z(s(\lambda))$ is compact, there is a convergent subsequence $y_{n_i}\to y\in Z(s(\lambda))$.  Now it is easy to see that $x_{n_i}=\lambda y_{n_i}\to \lambda y$. Since $W_\Lambda$ is Hausdorff, $x=\lambda y$ and $Z(\lambda)$ is closed.  

Similarly, let $\{x_n\}\subset Z(\lambda\setminus G)$  and $x_n\to x$. Then $x_n\in Z(\lambda)$. Since $W_\Lambda$ is compact, $Z(\lambda)$ is closed, and  then $x\in Z(\lambda)$. Suppose, by way of contradiction, that $x\in Z(\lambda\alpha)$ for some $\alpha\in G$. Since $Z(\lambda\alpha)$ is open, we have $x_n\in Z(\lambda\alpha)$ eventually, a contradiction. Hence $Z(\lambda\setminus G)$ is closed.
\end{proof}

We consider a nonempty subset $K$ of $\{1,\dots,k\}$, and set $J:=\{1,\dots, k\}\setminus K$. We view $\NN^k$ as $\NN^J\times \NN^K$, and for $n\in(\NN\cup\{\infty\})^k$, we write $n=(n_J,n_K)$ where $n_J\in(\NN\cup\{\infty\})^J$ and $n_K\in(\NN\cup\{\infty\})^K$.
For $m\in \NN^J$, we define
\begin{gather*}
\Lambda^{m,\infty_K}:=\big\{x \in W_\Lambda : d(x)_J=m\;\;\text{and}\;\; d(x)_K=\infty_K\big\}\quad\text{and}\quad
\partial^K\!\Lambda:=\textstyle{\bigcup_{m\in\NN^J}}\Lambda^{m,\infty_K}.
\end{gather*}  
We call elements of   $\partial^K\!\Lambda$ \emph{semi-infinite} paths.

For $m\in \N^J$ and $n\leq p\in \N^K$, we define $r_{n,p}:\Lambda^{m,p}\to \Lambda^{m,n}$ by $r_{n,p}(\lambda)=\lambda(0,(m,n))$. Then the factorisation property implies that for $n\leq p\leq q\in \N^K$ we have $r_{n,p}\circ r_{p,q}=r_{n,q}$. So when we view the finite sets $\Lambda^{m,n}$ as topological spaces with the discrete topology, $\{\Lambda^{m,n},r_{n,p}\}$ is an inverse system of compact Hausdorff spaces. Then the inverse limit 
\[
\textstyle{\varprojlim_{n\in \N^K}}\Lambda^{m,n}
\] is a compact Hausdorff space, which we can realise concretely as the subspace of the product $\prod_{n\in \N^K}\Lambda^{m,n}$ consisting of the elements $\{\lambda^n\}_{n\in \N^K}$ satisfying $r_{n,p}(\lambda^p)=\lambda^n$ for $n\leq p$.

\begin{prop}\label{idinvlim} 
 Let $m\in \NN^J$.
For each $\{\lambda^n\}\in \varprojlim_{n\in \N^K}\Lambda^{m,n}$ there is a path $x^\lambda\in \Lambda^{m,\infty_K}$ such that $x^\lambda(0,(m,n))=\lambda^n$ for all $n\in \N^K$, and then $\phi:\{\lambda^n\}\mapsto x^\lambda$ is a homeomorphism of $\varprojlim_{n\in \N^K}\Lambda^{m,n}$ onto the subset $\Lambda^{m,\infty_K}$ of $W_{\Lambda}$.  In particular, $\Lambda^{m,\infty_K}$ is compact Hausdorff.
\end{prop}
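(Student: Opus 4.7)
The plan is to construct the map $\phi$ by defining $x^\lambda$ directly as a functor on $\Omega_{k,(m,\infty_K)}$, verify it is a bijection, and then show it is a continuous bijection from the compact space $\varprojlim_{n\in\N^K}\Lambda^{m,n}$ onto the Hausdorff subspace $\Lambda^{m,\infty_K}$ of $W_\Lambda$; a continuous bijection from a compact space to a Hausdorff space is automatically a homeomorphism, and compactness of $\Lambda^{m,\infty_K}$ drops out as a bonus.

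Given a compatible family $\{\lambda^n\}_{n\in\N^K}$ and $(p,q)\in\Omega_{k,(m,\infty_K)}$, I would choose any $n\in\N^K$ with $q_K\leq n$ (possible because $q_K$ is finite) and set $x^\lambda(p,q):=\lambda^n(p,q)$. For $n'\geq n$, the inverse-limit compatibility $r_{n,n'}(\lambda^{n'})=\lambda^n$ unpacks to $\lambda^{n'}(0,(m,n))=\lambda^n$, and then the factorisation property gives $\lambda^{n'}(p,q)=\lambda^n(p,q)$; hence $x^\lambda$ is well-defined. Functoriality and degree-preservation transfer from the $\lambda^n$ by choosing $n$ large enough to accommodate any given composition.

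Bijectivity is routine. Injectivity follows from the identity $\lambda^n=x^\lambda(0,(m,n))$, which also verifies the formula in the statement. For surjectivity, given $x\in\Lambda^{m,\infty_K}$, the family $\lambda^n:=x(0,(m,n))$ lies in the inverse limit (compatibility is immediate from the factorisation property) and satisfies $x^{\lambda}=x$.

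The main obstacle is continuity of $\phi$, because the subspace topology inherited from $W_\Lambda$ is generated by the somewhat awkward sets $Z(\nu\setminus G)\cap\Lambda^{m,\infty_K}$, whereas the inverse-limit topology has a much simpler basis. I would first replace the complicated basis by the cleaner one $\{Z(\mu)\cap\Lambda^{m,\infty_K}:\mu\in\Lambda^{m,n},\ n\in\N^K\}$ as follows: given $x\in Z(\nu\setminus G)\cap\Lambda^{m,\infty_K}$, any $\alpha\in G$ with $d(\nu\alpha)_J>m$ can be discarded since $Z(\nu\alpha)\cap\Lambda^{m,\infty_K}=\emptyset$; then I would choose $n\in\N^K$ exceeding $d(\nu\alpha)_K$ for every remaining $\alpha$, set $\mu:=x(0,(m,n))$, and use the factorisation property to check that $Z(\mu)\cap\Lambda^{m,\infty_K}\subseteq Z(\nu\setminus G)\cap\Lambda^{m,\infty_K}$. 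Since $\phi^{-1}\bigl(Z(\mu)\cap\Lambda^{m,\infty_K}\bigr)$ is precisely the basic open set $\pi_n^{-1}(\{\mu\})$ in the inverse-limit topology (where $\pi_n$ is the projection to the $n$th coordinate), continuity is established and the argument closes.
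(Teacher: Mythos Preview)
Your argument is correct and follows essentially the same route as the paper: the paper isolates your basis-replacement step as a separate lemma (Lemma~\ref{lem:top'}), but the logic is identical, including the compact-to-Hausdorff finish. One small omission: when choosing $n$ you should also require $n\geq d(\nu)_K$ (as the paper does), so that $d(\mu)\geq d(\nu)$ and hence $Z(\mu)\subseteq Z(\nu)$ even when $G$ is empty or all $\alpha$ are discarded.
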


For the proof, we need a lemma.

\begin{lem}\label{lem:top'}
The cylinder sets $\{Z(\lambda)\cap \Lambda^{m,\infty_K}:\lambda\in\Lambda, d(\lambda)_J=m\}$ are a basis for the relative topology on $\Lambda^{m,\infty_K}\subset W_{\Lambda}$.
\end{lem}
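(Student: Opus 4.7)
The plan is to show (a) that each set $Z(\lambda) \cap \Lambda^{m,\infty_K}$ with $d(\lambda)_J=m$ is relatively open in $\Lambda^{m,\infty_K}$, and (b) that if $U$ is any relatively open neighbourhood of $x\in \Lambda^{m,\infty_K}$, there exists $\mu\in\Lambda$ with $d(\mu)_J=m$ and $x \in Z(\mu)\cap \Lambda^{m,\infty_K}\subset U$. Part (a) is immediate because $Z(\lambda)$ is already one of Webster's basic open sets in $W_\Lambda$.

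For part (b), I would begin by using the description of the topology on $W_\Lambda$ coming from Theorems 3.1--3.2 of \cite{W}: replacing $U$ by a smaller set if necessary, we may assume $U = Z(\lambda\setminus G)\cap \Lambda^{m,\infty_K}$ for some $\lambda\in\Lambda$ and some finite $G\subset s(\lambda)\Lambda$ containing $x$. From $x\in Z(\lambda)$ and $d(x)=(m,\infty_K)$ one gets $d(\lambda)\leq (m,\infty_K)$, so in particular $d(\lambda)_J\leq m$ and $d(\lambda)_K\in\NN^K$. Choose $N\in\NN^K$ with $N\geq d(\lambda)_K+d(\alpha)_K$ for every $\alpha\in G$ (only finitely many conditions) and define
\[
\mu := x\bigl(0,(m,N)\bigr).
\]
Then $d(\mu)_J=m$ and $x\in Z(\mu)$ by construction, so it only remains to verify the inclusion $Z(\mu)\cap \Lambda^{m,\infty_K}\subset Z(\lambda\setminus G)$.

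For that inclusion, let $y\in Z(\mu)\cap \Lambda^{m,\infty_K}$. Since $d(\lambda)\leq (m,N)=d(\mu)$, the factorisation property gives $y(0,d(\lambda))=\mu(0,d(\lambda))=\lambda$, so $y\in Z(\lambda)$. Suppose for contradiction that $y\in Z(\lambda\alpha)$ for some $\alpha\in G$. Then $d(\lambda\alpha)\leq d(y)=(m,\infty_K)$, forcing $d(\lambda\alpha)_J\leq m$; combined with our choice of $N$ this gives $d(\lambda\alpha)\leq (m,N)=d(\mu)$, and hence $y(0,d(\lambda\alpha))=\mu(0,d(\lambda\alpha))=x(0,d(\lambda\alpha))$, so $x\in Z(\lambda\alpha)$, contradicting $x\in Z(\lambda\setminus G)$. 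Thus $y\in Z(\lambda\setminus G)$, completing (b).

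The only delicate point, and the main obstacle, is the coordinate bookkeeping separating the $J$- and $K$-directions: one has to check that those $\alpha\in G$ for which $d(\lambda\alpha)_J>m$ cause no trouble because $Z(\lambda\alpha)$ is automatically disjoint from $\Lambda^{m,\infty_K}$, while the remaining finitely many $\alpha$ can be controlled by extending $x$ far enough in the $K$-direction when choosing $N$. Everything else is routine use of the factorisation property of $\Lambda$.
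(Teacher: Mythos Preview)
Your proof is correct and follows essentially the same approach as the paper's: both reduce to a basic neighbourhood $Z(\lambda\setminus G)$, observe $d(\lambda)_J\le m$, choose a sufficiently large $N\in\NN^K$, and take the initial segment $\mu=x(0,(m,N))$. The only cosmetic difference is that the paper restricts the defining maximum for $N$ to those $\alpha\in G$ with $d(\lambda\alpha)_J\le m$, whereas you take $N$ to dominate $d(\lambda\alpha)_K$ for \emph{all} $\alpha\in G$; as your final paragraph correctly notes, the extra $\alpha$'s are harmless since $Z(\lambda\alpha)\cap\Lambda^{m,\infty_K}=\emptyset$ for them anyway.
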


\begin{proof}
Suppose that $x\in \Lambda^{m,\infty_K}$ and $Z(\tau\setminus G)$ is a basic open neighbourhood of $x$ in $W_\Lambda$ --- in other words,  $\tau\in \Lambda$, $G$ is a finite subset of $s(\tau)\Lambda$, and
\[
x\in Z(\tau\setminus G)=Z(\tau)\setminus \big(\textstyle{\bigcup_{\alpha\in G}}Z(\tau\alpha)\big).
\]
We have to find $\lambda\in \Lambda$ such that $d(\lambda)_J=m$, $x\in Z(\lambda)$ and
\begin{equation}\label{needsigma}
Z(\lambda)\cap \Lambda^{m,\infty_K}\subset  Z(\tau)\setminus \big(\textstyle{\bigcup_{\alpha\in G}}Z(\tau\alpha)\big).
\end{equation}
We note first that $x\in Z(\tau)$ implies that $d(\tau)_J\leq m$. Next, we observe that 
\[
\Lambda^{m,\infty_K}\cap Z(\tau\alpha)\not=\emptyset \Longrightarrow d(\tau\alpha)_J\leq m.
\]
Now we take
\[
n=\big(\textstyle{\bigvee_{\alpha\in G,\;d(\tau\alpha)_J\leq m}}d(\tau\alpha)_K\big)\vee d(\tau)_K.
\]
Then since $x\notin Z(\tau\alpha)$ for all $\alpha\in G$, we have 
\[
x\in Z(x(0,(m,n)))\cap \Lambda^{m,\infty_K}\subset Z(\tau)\setminus \big(\textstyle{\bigcup_{\alpha\in G}}Z(\tau\alpha)\big),
\]
as required.
\end{proof}

\begin{proof}[Proof of Proposition~\ref{idinvlim}]
The existence of $x^\lambda$ is established as in \cite[Remarks~2.2]{KP} (which covers the case $J=\emptyset$). The factorisation property implies that for every $x\in \Lambda^{m,\infty_K}$, $\{\lambda^n\}=\{x(0,(m,n))\}$ belongs to $\varprojlim_{n\in \N^K}\Lambda^{m,n}$, and that $x\mapsto\{x(0,(m,n))\}$ is a set-theoretic inverse for $\phi$.

Let $\sigma\in \Lambda$ such that $d(\sigma)_J=m$. Then $\phi^{-1}(Z(\sigma))$ is the intersection of $\varprojlim_{n\in \N^K}\Lambda^{m,n}$ with the product set
\[
\Big\{\{\mu^n\}\in \prod_{n\in \N^K}\Lambda^{m,n}:\mu^n=\sigma(0,(m,n))\text{ for $n\leq d(\sigma)_K$}\Big\},
\]
and is therefore open in $\varprojlim_{n\in \N^K}\Lambda^{m,n}$. Since  $\{Z(\sigma)\cap \Lambda^{m,\infty_K}:d(\sigma)_J=m\}$ is a basis for the topology on $\Lambda^{m,\infty_K}$ by Lemma~\ref{lem:top'},  it follows that $\phi$ is continuous. Since the inverse limit is compact and $W_\Lambda$ is Hausdorff \cite[Theorem~3.2]{W}, $\phi$ is a homeomorphism onto its range $\Lambda^{m,\infty_K}$.
Since $\varprojlim_{n\in \N^K}\Lambda^{m,n}$ is  compact so is $\Lambda^{m,\infty_K}$.
\end{proof}

Next we build a Toeplitz-Cuntz-Krieger $\Lambda$-family $\{T_\lambda:\lambda\in\Lambda\}$ on $\ell^2(\partial^K\!\Lambda)$, and then the universal property of $\mathcal{T}C^*(\Lambda)$ gives a representation $\pi_T:\mathcal{T}C^*(\Lambda)\to B(\ell^2(\partial^K\!\Lambda))$.
Since $\pi_T$ depends on the partition $\{1,\dots,k\}=J\sqcup K$, we denote it by $\pi^K$, and we call it the \emph{semi-infinite path representation} for $K$.
We prove that $\pi^K$  factors through the quotient of $\mathcal{T}C^*(\Lambda)$ by the ideal $I^K$ generated by
\[
\Big\{t_v-\sum_{e\in v\Lambda^{e_i}}t_e t^*_e :v\in\Lambda^0,i\in K\Big\}.
\]

\begin{rmk}
The quotient $\mathcal{T}C^*(\Lambda)/I^K$ is an interesting example of the relative Cuntz-Krieger algebras $C^*(\Lambda;\E)$ of \cite[\S3]{S1}, and $\pi^K$ is one of the boundary-path representations in that paper. Since this last observation can only be directed to those familiar with \cite{S1}, we have put the details in Appendix~\ref{App:RCK}.
\end{rmk}

\begin{prop}\label{TCK}
Let $\Lambda$ be a finite $k$-graph with no sources. Let $J\sqcup K$ be a nontrivial partition of $\{1,\dots, k\}$.
 Let $\{\xi_{m,x} :m\in\NN^J,x\in\Lambda^{m,\infty_K}\}$ be the usual orthonormal basis of point masses in \[
\ell^2(\partial^K\!\Lambda)=\bigoplus_{m\in\NN^J}\ell^2(\Lambda^{m,\infty_K}).
\]
 For $\lambda\in \Lambda$, let $T_\lambda$ be the operator on $\ell^2(\partial^K\!\Lambda)$ such that
\[
T_\lambda \xi_{m,x}=\begin{cases}\xi_{m+d(\lambda)_J,\lambda x}\quad\text{if}\; s(\lambda)=r(x)\\
0\quad\quad\quad\quad\;\;\;\text{otherwise.}\end{cases}
\]
Then $\{T_\lambda:\lambda\in\Lambda\}$ is a Toeplitz-Cuntz-Krieger $\Lambda$-family  such that
\begin{equation*}
\sum_{e\in v\Lambda^{e_i}}T_e T^*_e=T_v\quad \text{for $v\in\Lambda^0$ and $i\in K$.}
\end{equation*}
\end{prop}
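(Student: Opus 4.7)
The plan is to verify each of the Toeplitz--Cuntz--Krieger relations (T1)--(T5) and the stated extra relation by direct calculation on the orthonormal basis $\{\xi_{m,x}\}$. Each $T_\lambda$ sends basis vectors to basis vectors or to $0$, so it is a partial isometry, and a short adjoint computation shows that
\[
T_\lambda^*\xi_{m,x}=\begin{cases}\xi_{m-d(\lambda)_J,\,x(d(\lambda),d(x))}&\text{if }x\in Z(\lambda),\\ 0&\text{otherwise.}\end{cases}
\]
Here the condition $x\in Z(\lambda)$ forces $d(\lambda)\leq d(x)=(m,\infty_K)$, so $m-d(\lambda)_J\in\NN^J$ and $x(d(\lambda),d(x))\in\Lambda^{m-d(\lambda)_J,\infty_K}$, making the output a genuine basis vector of $\ell^2(\partial^K\!\Lambda)$.

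Relations (T1)--(T3) then follow by inspection. For (T1), $T_v$ is the orthogonal projection onto $\clsp\{\xi_{m,x}:r(x)=v\}$, and distinct vertices give orthogonal projections because each basis vector has a single range. For (T2), both sides send $\xi_{m,x}$ to $\xi_{m+d(\lambda\mu)_J,\lambda\mu x}$ when $s(\mu)=r(x)$, and to $0$ otherwise. For (T3), $T_\lambda^*T_\lambda\xi_{m,x}$ strips off the $\lambda$ just prepended by $T_\lambda$, yielding $T_{s(\lambda)}\xi_{m,x}$.

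For (T4) I would observe that the projections $\{T_\lambda T_\lambda^*:\lambda\in v\Lambda^n\}$ are mutually orthogonal, because by the factorisation property a basis vector $\xi_{m,x}$ lies in at most one cylinder $Z(\lambda)$ with $d(\lambda)=n$, and each is dominated by $T_v$. The extra relation for $i\in K$ follows because $d(e)_J=0$ for every $e\in v\Lambda^{e_i}$, so the $J$-index constraint in the formula for $T_e^*$ is vacuous; and since $d(x)_i=\infty$, every $x\in\Lambda^{m,\infty_K}$ with $r(x)=v$ has a unique edge-prefix $x(0,e_i)\in v\Lambda^{e_i}$. Hence exactly one term $T_eT_e^*$ fixes $\xi_{m,x}$, giving $\sum_{e\in v\Lambda^{e_i}}T_eT_e^*=T_v$.

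The main obstacle is (T5), which I would handle by a direct case analysis. Given $\xi_{m,x}$, both sides vanish unless $s(\nu)=r(x)$ and $\nu x\in Z(\mu)$; assume these hold. Set $\beta:=(\nu x)(0,d(\mu)\vee d(\nu))$, which makes sense because $d(\mu)\vee d(\nu)\leq d(\nu x)$, and apply the factorisation property to obtain unique $\eta,\zeta\in\Lambda$ with $\mu\eta=\nu\zeta=\beta$; by construction $(\eta,\zeta)\in\Lambda^{\text{min}}(\mu,\nu)$. Using the identity $d(\eta)_J-d(\zeta)_J=d(\nu)_J-d(\mu)_J$ together with the factorisation $x=\zeta\cdot x(d(\zeta),d(x))$, which yields $\eta\cdot x(d(\zeta),d(x))=(\nu x)(d(\mu),d(\nu x))$, a short calculation gives
\[
T_\eta T_\zeta^*\xi_{m,x}=\xi_{m+d(\nu)_J-d(\mu)_J,\,(\nu x)(d(\mu),d(\nu x))}=T_\mu^*T_\nu\xi_{m,x}.
\]
Any other $(\eta',\zeta')\in\Lambda^{\text{min}}(\mu,\nu)$ contributes $0$: for $T_{\eta'}T_{\zeta'}^*\xi_{m,x}$ to be nonzero, $\mu\eta'=\nu\zeta'$ must equal a prefix of $\nu x$ of degree $d(\mu)\vee d(\nu)$, which forces $(\eta',\zeta')=(\eta,\zeta)$. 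This completes the verification.
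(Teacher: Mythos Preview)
Your proof is correct and follows the same approach as the paper: direct verification of (T1)--(T5) and the extra Cuntz--Krieger relation on the basis $\{\xi_{m,x}\}$, using the same formula for $T_\lambda^*$. The only difference is organisational---the paper checks (T5) by comparing matrix coefficients $(T_\mu^*T_\nu\xi_{m,x}\mid\xi_{n,y})$ in a three-case analysis (and then deduces (T4) from (T5)), whereas you evaluate both sides of (T5) directly on $\xi_{m,x}$ and pick out the unique contributing pair $(\eta,\zeta)$ via the factorisation of $(\nu x)(0,d(\mu)\vee d(\nu))$---but the underlying idea is identical and your version is slightly more streamlined.
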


\begin{proof}
Let $\lambda\in \Lambda$. Then the adjoint $T^*_\lambda$ is characterised by
\[
T^*_\lambda\xi_{m,x}=\begin{cases}\xi_{m-d(\lambda)_J, x(d(\lambda),\infty)}\quad \text{if}\;\; x(0,d(\lambda))=\lambda\;\;\text{and}\;\;m\ge d(\lambda)_J\\ 0\quad\quad\quad\quad\quad\quad\quad\;\; \text{otherwise.}\end{cases}
\]
Now it is easy to see that $T_\lambda T_\lambda^* T_\lambda=T_\lambda$, so $T_\lambda$ is a partial isometry. Let $v,w\in \Lambda^0$. Then $T_v$ is the projection onto $\clsp\{\xi_{m,x}:x\in\partial^K\!\Lambda, r(x)=v\}$, and $T_vT_w=0$ unless $v=w$. Thus $\{T_v:v\in\Lambda^0\}$ is a set of mutually orthogonal projections, and we have proved (T1).

For (T2), fix $\lambda,\mu\in\Lambda$ with $s(\lambda)=r(\mu)$. Then
\[\begin{split}
T_\lambda T_\mu\xi_{m,x}&=\begin{cases}T_\lambda \xi_{m+d(\mu)_J,\mu x}\quad\text{if}\;\;s(\mu)=r(x)\\
0\quad\quad\quad\quad\quad\quad\;\text{otherwise}\end{cases}\\
&=\begin{cases} \xi_{m+d(\mu)_J+d(\lambda)_J,\lambda\mu x}\quad\text{if}\;\;s(\mu)=r(x)\\
0\quad\quad\quad\quad\quad\quad\quad\;\;\;\text{otherwise}\end{cases}\\
&=T_{\lambda\mu}\xi_{m,x},\\
\end{split}\]
since $d(\lambda)_J +d(\mu)_J=d(\lambda\mu)_J$. To see (T3), take $\lambda\in\Lambda$. Then
\[\begin{split}
T^*_\lambda T_\lambda \xi_{m,x}&=\begin{cases}T^*_\lambda \xi_{m+d(\lambda)_J,\lambda x}\quad\text{if}\;\;s(\lambda)=r(x)\\
0\quad\quad\quad\quad\quad\quad\;\text{otherwise}\end{cases}\\
&=\begin{cases}\xi_{m,x}\quad\text{if}\;\;s(\lambda)=r(x)\\
0\quad\quad\;\text{otherwise}\end{cases}\\
&=T_{s(\lambda)}\xi_{m,x}.
\end{split}\]

We want to use (T5) to prove (T4), so we next establish (T5). Let $\mu, \nu\in\Lambda$ and $\xi_{m,x}, \xi_{n,y}\in \ell^2(\partial^K\!\Lambda)$. We will show that
\begin{equation}\label{eq-t5}
(T^*_\mu T_\nu \xi_{m,x}\mid \xi_{n,y})=\Big(\sum_{(\eta,\zeta)\in\Lambda^{\text{min}}(\mu,\nu)}T_\eta T^*_\zeta \xi_{m,x}\;\big|\; \xi_{n,y}\Big).
 \end{equation}
We consider three cases. First, suppose that $\Lambda^{\text{min}}(\mu,\nu)=\emptyset$. Then $\mu x\ne \nu y$ and 
\[
(T^*_\mu T_\nu \xi_{m,x}\mid \xi_{n,y})=(T_\nu \xi_{m,x}\mid T_\mu \xi_{n,y})=(\xi_{m+d(\nu)_J,\nu x}\mid \xi_{n+d(\mu)_J, \mu y})=0.
\]
The empty sum on the right-hand-side of  \eqref{eq-t5} is by definition zero,  and hence  \eqref{eq-t5} holds in this first case.

Second, suppose that $\Lambda^{\text{min}}(\mu,\nu)\ne\emptyset$ and  $(T^*_\mu T_\nu\xi_{m,x}\mid \xi_{n,y})\ne 0$.
Then
\[
0\neq (T^*_\mu T_\nu\xi_{m,x}\mid \xi_{n,y})=(T_\nu\xi_{m,x}\mid T_\mu \xi_{n,y})=(\xi_{m+d(\nu)_J,\nu x}\mid \xi_{n+d(\mu)_J,\mu y}).
\]
Thus $\xi_{m+d(\nu)_J,\nu x}=\xi_{n+d(\mu)_J,\mu y}$ and $(T^*_\mu T_\nu\xi_{m,x}\mid \xi_{n,y})=1$.
To see that
\[
\Big(\sum_{(\eta,\zeta)\in\Lambda^{\text{min}}(\mu,\nu)}T_\eta T^*_\zeta \xi_{m,x}\mid \xi_{n,y}\Big)=1,
\]
 we will find $(\sigma,\tau)\in\Lambda^{\text{min}}(\mu,\nu)$ such that $(T_\sigma T^*_\tau \xi_{m,x}\mid \xi_{n,y})=1$; this suffices because  then  $T^*_\zeta\xi_{m,x}=0$ and $T_\eta^* \xi_{n,y}=0$ for every other $(\eta,\zeta)\in \Lambda^{\text{min}}(\mu,\nu)$. Since $\xi_{m+d(\nu)_J,\nu x}=\xi_{n+d(\mu)_J,\mu y}$, we have
\[
m+d(\nu)_J=n+d(\mu)_J,\;\; \nu x=\mu y,\;\; r(x)=s(\nu)\;\; \text{\ and\ } \;\;r(y)=s(\mu).
\]
Let $\sigma=y(0,d(\mu)\vee d(\nu)-d(\mu))$ and $\tau=x(0,d(\mu)\vee d(\nu)-d(\nu))$.
Then \[\mu\sigma=(\mu y)(0, d(\mu)\vee d(\nu))=(\nu x)(0, d(\mu)\vee d(\nu))=\nu\tau.\] So $(\sigma, \tau)\in \Lambda^{\text{min}}(\mu,\nu)$.
Then $x=\tau x'$ and $y=\sigma y'$ for some $x', y'$. But $\nu\tau x'=\nu x=\mu y=\mu \sigma y'$ and $\nu\tau=\mu\sigma$, so $x'=y'$.
Also $m+d(\nu)_J=n+d(\mu)_J$ and $\nu\tau=\mu\sigma$ imply that $m-d(\tau)_J=n-d(\sigma)_J$. So we have
\[\begin{split}
(T_\sigma T^*_\tau \xi_{m,x}\mid \xi_{n,y})&=(T^*_\tau \xi_{m,x}\mid  T^*_\sigma\xi_{n,y})\\
&=(\xi_{m-d(\tau)_J,x(d(\tau),\infty)}\mid  \xi_{n-d(\sigma)_J, y(d(\sigma),\infty)})\\
&= (\xi_{m-d(\tau)_J, x'}\mid \xi_{n-d(\sigma)_J,y'})=1,
 \end{split}\]
as required.

Third, suppose that $\Lambda^{\text{min}}(\mu,\nu)\ne\emptyset$ and  $(T^*_\mu T_\nu\xi_{m,x}\mid \xi_{n,y})=0$. We argue by contradiction. Suppose that the right-hand-side of \eqref{eq-t5} is not zero. Then there exists $ (\sigma,\tau)\in\Lambda^{\text{min}}(\mu,\nu)$ such that $(T_\sigma T^*_\tau \xi_{m,x}\mid \xi_{n,y})\neq 0$.
 This implies that 
 \[0\ne (T^*_\tau \xi_{m,x}\mid T^*_\sigma \xi_{n,y})=(\xi_{m-d(\tau)_J,x(d(\tau),\infty)}\mid \xi_{n-d(\sigma)_J,y(d(\sigma),\infty)}).
\]
So $\xi_{m-d(\tau)_J,x(d(\tau),\infty)}=\xi_{n-d(\sigma)_J,y(d(\sigma),\infty)}$, $m-d(\tau)_J=n-d(\sigma)_J$, and there exists a semi-infinite path $x'$ such that $x=\tau x'$ and $y=\sigma x'$.
Since $\mu\sigma=\nu\tau$, we get $\nu x=\nu\tau x'=\mu\sigma x'=\mu y$. Also
$m-d(\tau)_J=n-d(\sigma)_J$ implies that $m+d(\nu)_J=n+d(\mu)_J$.
Therefore $(\xi_{m+d(\nu)_J,\nu x}\mid \xi_{n+d(\mu)_J,\mu y})=1$, which contradicts $(T^*_\mu T_\nu\xi_{m,x}\mid \xi_{n,y})=0$. Thus we have proved our third case, and we have verified that (T5) holds.

To see (T4), let $v\in\Lambda^0$ and $n\in\NN^k$.  Let  $\lambda,\mu\in v\Lambda^n$ such that $\lambda\ne \mu$. Since $\lambda$ and $\mu$ have the same degree they cannot have a common extension, and hence $\Lambda^{\text{min}}(\lambda,\mu)=\emptyset$. Then (T5) forces $T_\lambda (T^*_\lambda T_\mu )T^*_\mu=0$.
By (T2), $T_v T_\lambda T^*_\lambda=T_\lambda T^*_\lambda$, that is, $T_v \ge T_\lambda T^*_\lambda$. Thus $T_v\ge \sum_{\lambda\in v\Lambda^n}T_\lambda T^*_\lambda$ which is (T4). We have now shown that $\{T_\lambda:\lambda\in\Lambda\}$ is a Toeplitz-Cuntz-Krieger  $\Lambda$-family.

Finally, let $v\in\Lambda^0$,  $i\in K$ and  $x\in\partial^K\!\Lambda$. Let $e\in v\Lambda^{e_i}$. If $r(x)\neq v$, then $T_eT_e^*\xi_{m,x}=0=T_v\xi_{m,x}$, and hence
$(\sum_{f\in v\Lambda^{e_i}}T_fT_f^*)\xi_{m,x}=0=T_v\xi_{m,x}$.
So suppose $r(x)=v$. Since $d(e)_J=0$, 
\[\begin{split}
T_e T^*_e \xi_{m,x}&=
\begin{cases}T_e \xi_{m, x(d(e),\infty)}\quad\text{if}\;\;x(0,e_i)=e\\ 0\quad\quad\quad\quad\quad\quad\; \text{otherwise}\end{cases}\\
&=\begin{cases}\xi_{m,x}\quad\text{if}\;\;x(0,e_i)=e\\ 0\quad\quad\; \text{otherwise.}\end{cases}
\end{split}\]
Since $d(x)_i=\infty$ we have $T_eT_e^*\xi_{m,x}=\xi_{m,x}$ for exactly one edge $e=x(0,e_i)$, and hence $\big(\sum_{f\in v\Lambda^{e_i}}T_f T^*_f\big)\xi_{m,x}=T_v\xi_{m,x}$. Thus $\sum_{f\in v\Lambda^{e_i}}T_f T^*_f=T_v$, and we are done.
\end{proof}

\section{$\KMS$ states on Toeplitz algebras}\label{sec-kms-states}

For $r\in(0,\infty)^k$, define $\alpha^r:\RR\to\Aut \mathcal{T}C^*(\Lambda)$ in terms of the gauge action by $\alpha^r_t=\gamma_{e^{itr}}$. Then for $\mu,\nu\in \Lambda$, 
\[
\alpha^r_t(t_\mu t_\nu^*)=e^{itr\cdot(d(\mu)-d(\nu))}t_\mu t_\nu^* \]
is the restriction of the analytic function $z\mapsto e^{izr\cdot(d(\mu)-d(\nu))}t_\mu t_\nu^*$. Thus to see that a state is a KMS$_\beta$ state for $(\Tt C^*(\Lambda),\alpha^r)$, it suffices to check the $\KMS$ condition on pairs of elements of the form $t_\mu t_\nu^*$.

The following result is an improvement on \cite[Corollary~4.3]{aHLRS2}, which requires $\Lambda$ to be coordinatewise irreducible.

\begin{prop}\label{KMS1}
Let $\Lambda$ be a finite $k$-graph with no sources, and suppose that all the coordinate graphs $(\Lambda^0,\Lambda^{e_i},r,s)$ have cycles. Let $r\in (0,\infty)^k$ and $\beta\in [0,\infty)$. If there is a $\KMS_\beta$ state of  $(\TC^*(\Lambda), \alpha^r)$, then $\beta r_i \ge \ln \rho(A_i)$ for $1\leq i\leq k$.
\end{prop}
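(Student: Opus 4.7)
The plan is to derive a subinvariance relation for the vector $\epsilon\in[0,\infty)^{\Lambda^0}$ defined by $\epsilon(v):=\phi(t_v)$, and then feed it into the subinvariance theorem recalled in \S\ref{secKMS}. Observe that $\sum_v\epsilon(v)=\phi(1)=1$, so $\epsilon$ is nonzero.

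The first step is a standard $\KMS$ identity on the range projections. Each $t_\lambda$ is analytic with $\alpha^r_z(t_\lambda)=e^{izr\cdot d(\lambda)}t_\lambda$, so applying the $\KMS$ condition with $a=t_\lambda$, $b=t_\lambda^*$ and using (T3) gives
\[
\phi(t_\lambda t_\lambda^*)=\phi\bigl(t_\lambda^*\,\alpha^r_{i\beta}(t_\lambda)\bigr)=e^{-\beta r\cdot d(\lambda)}\phi(t_{s(\lambda)}).
\]
For each $i$, apply $\phi$ to the inequality (T4) at $v\in\Lambda^0$ with $n=e_i$ and substitute this identity:
\[
\epsilon(v)\;\ge\;\sum_{\lambda\in v\Lambda^{e_i}}\phi(t_\lambda t_\lambda^*)\;=\;e^{-\beta r_i}\sum_{w\in\Lambda^0}A_i(v,w)\,\epsilon(w)\;=\;e^{-\beta r_i}(A_i\epsilon)(v),
\]
since $|v\Lambda^{e_i}w|=A_i(v,w)$. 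Rearranging gives the subinvariance $A_i\epsilon\le e^{\beta r_i}\epsilon$.

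The main obstacle is the concluding step: extracting $e^{\beta r_i}\ge\rho(A_i)$ from the subinvariance $A_i\epsilon\le e^{\beta r_i}\epsilon$ with $\epsilon\ge 0$, $\epsilon\ne 0$. The subinvariance theorem recalled in \S\ref{secKMS} requires the matrix to be irreducible, while here $A_i$ may be reducible and the hypothesis only ensures that the $i$-th coordinate graph contains a cycle, i.e.\ $\rho(A_i)>0$. To close the gap I would pass to a basic irreducible component $C\subset\Lambda^0$ of $A_i$ with $\rho(A_i|_C)=\rho(A_i)$, guaranteed to exist by the Frobenius normal form once $\rho(A_i)>0$, and pair the inequality $A_i\epsilon\le e^{\beta r_i}\epsilon$ against the left Perron--Frobenius eigenvector of $A_i|_C$, extended by zero. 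The delicate point is to verify that this pairing against $\epsilon$ is nonzero; this is where the cycle hypothesis together with the TCK relations (T3) and (T5) at edges lying on cycles of the $i$-th coordinate graph must be combined to rule out $\epsilon$ being concentrated entirely on vertices that cannot see any basic class of $A_i$.
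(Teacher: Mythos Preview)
Your approach is genuinely different from the paper's. You work directly with the vector $\epsilon=(\phi(t_v))_v$, derive the subinvariance $A_i\epsilon\le e^{\beta r_i}\epsilon$ from (T4) and the KMS identity, and then try to extract the spectral-radius bound by pairing against a left Perron--Frobenius eigenvector of $A_i|_C$ extended by zero, for a basic class $C$. The paper instead chooses a strongly connected component $C$ with $\rho(A_C)=\rho(A_i)$, notes that $\{t_v:v\in C\}\cup\{t_e:e\in C\Lambda^{e_i}C\}$ is a Toeplitz--Cuntz--Krieger family for the $1$-graph $E_C=(C,C\Lambda^{e_i}C,r,s)$, pulls $\phi$ back along the induced homomorphism $\pi:\Tt C^*(E_C)\to\Tt C^*(\Lambda)$, and then invokes the known bound \cite[Theorem~4.3(c)]{aHLRS1} for the strongly connected $1$-graph $E_C$.

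Your last paragraph, however, is not a proof: you correctly isolate the obstruction (one needs $\epsilon|_C\ne 0$ so that the pairing $\ell^T\epsilon$ is positive), but the fix you sketch --- combining (T3), (T5) and the cycle hypothesis --- is not carried out, and it is not at all clear how those relations would force $\phi(t_v)>0$ for some $v\in C$. Indeed, the subinvariance $A_i\epsilon\le e^{\beta r_i}\epsilon$ alone cannot do it: if some vertex $v\notin C$ carries an $e_i$-cycle whose class has spectral radius strictly less than $\rho(A_i)$, the inequality is consistent with $\epsilon$ supported at $v$ and $e^{\beta r_i}$ as small as that local spectral radius. Nothing in (T3) or (T5) evaluated on a single colour forces mass onto $C$. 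Note, incidentally, that the paper's reduction faces the same obstruction in disguise: the composite $\phi\circ\pi$ is a positive KMS functional on $\Tt C^*(E_C)$, but to normalise it to a \emph{state} and invoke \cite{aHLRS1} one needs $\sum_{v\in C}\phi(t_v)>0$, a point the paper does not address. So your diagnosis of the difficulty is accurate; neither your sketch nor the paper's argument, as written, resolves it.
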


\begin{proof}
Suppose that $\phi$ is a KMS$_\beta$ state of $(\TC^*(\Lambda), \alpha^r)$, and fix $i$. Since $(\Lambda^0,\Lambda^{e_i},r,s)$ is not a cycle, there is a strongly connected component $C$ of this coordinate graph such that the $C\times C$ block $A_C$ of $A_i$ has $\rho(A_C)=\rho(A_i)>0$. (To see this, consider a Seneta decomposition of $A_i$, as described in \cite[\S2.3]{aHLRSseq}.) Then $A_C$ is irreducible, and the directed graph $E_C=(\Lambda^0,C\Lambda^{e_i}C,r,s)$ is strongly connected with vertex matrix $A_C$. The set $\{t_v:v\in C\}\cup\{t_e:e\in C\Lambda^{e_i}C\}$ is a Toeplitz-Cuntz-Krieger $E_C$-family in $\Tt C^*(\Lambda)$, and hence there is a homomorphism $\pi$ of $\Tt C^*(E_C)$ into $\Tt C^*(\Lambda)$ such that $\pi(s_e)=t_e$ for all $e\in C\Lambda^{e_i}C$. This homomorphism is equivariant for the action $\alpha$ studied in \cite{aHLRS1} and the action $\alpha'$ on $\Tt C^*(\Lambda)$ defined by $\alpha'_t=\alpha^r_{r_i^{-1}t}$. Lemma~\ref{scaledynamics} implies that $\phi$  is a KMS$_{r_i\beta}$ state of $(\Tt C^*(\Lambda),\alpha')$. Thus $\pi\circ\phi$ is a KMS$_{r_i\beta}$ state of $(\Tt C^*(E_C),\alpha)$, and since $E_C$ is strongly connected, it follows from \cite[Theorem~4.3(c)]{aHLRS1} that $r_i\beta\geq \ln\rho(A_C)=\ln\rho(A_i)$. 
\end{proof}

When $\beta$ is strictly larger than all the numbers $r_i^{-1}\ln \rho(A_i)$, Theorem~6.1 of \cite{aHLRS2} gives a $(|\Lambda^0|-1)$-dimensional simplex of KMS$_\beta$ states of $(\Tt C^*(\Lambda),\alpha^r)$. When $\beta$ is strictly less than any of the $r_i^{-1}\ln\rho(A_i)$, Proposition~\ref{KMS1} implies that there are no KMS$_\beta$ states at all. So the behaviour of the KMS$_\beta$ states changes dramatically as the inverse temperature  $\beta$ passes through the value
\[\beta_c:=\max_i\{r_i^{-1}\ln\rho(A_i)\};\]
we call $\beta_c$ the \emph{critical inverse temperature}. In this paper, we are interested in what happens at $\beta=\beta_c$.

Recall from Lemma~\ref{scaledynamics} that scaling the time $t$ does not effectively change the behaviour of KMS states. So in our case, replacing the vector $r$ by a scalar multiple will not change things significantly. We choose to use the unique multiple that has
\begin{equation}\label{choice}
\beta_c=\max_i\{r_i^{-1}\ln\rho(A_i)\}=1,
\end{equation}
and then we are interested in the KMS$_1$ states. To emphasise: even if we forget to say so locally, the restriction \eqref{choice} is in force throughout the rest of the paper. Thus we have $r_i=\ln\rho(A_i)$ for $i$ in some nonempty subset $K$ of $\{1,\dots,k\}$, and $r_i>\ln\rho(A_i)$ for $i\in J:=\{1,\dots,k\}\setminus K$. For the preferred dynamics studied in \cite[\S7]{aHLRS2} and \cite{aHLRS3}, we have $K=\{1,\dots,k\}$, but here we are thinking primarily about the case where $K$ is a proper subset of $\{1,\dots,k\}$.

\begin{prop}\label{KMS_combine}
Suppose that $\Lambda$ is a finite $k$-graph with no sources, and that $r\in (0,\infty)^k$. We suppose that $ r_i\geq\ln \rho(A_i)$ for all $i$, and that
\begin{equation}\label{defK}
K:=\big\{i\in\{1,\dots, k\}: r_i=\ln\rho(A_i)\big\}
\end{equation}
is nonempty.
\begin{enumerate}
\item\label{4a} There exist KMS$_1$ states of $(\TC^*(\Lambda),\alpha^r)$.

\item\label{4b} Every KMS$_1$ state  of $(\TC^*(\Lambda),\alpha^r)$ factors through the ideal generated by
\[
\Big\{ t_v-\sum_{e\in v\Lambda^{e_i}}t_e t^*_e : v\in\Lambda^0, i\in K, \text{$A_i$ is irreducible}\Big\}.
\]
\item\label{4c} Suppose that the coordinates of $r$ are rationally independent and that there exists $i\in K$ such that $A_i$ is irreducible. Let $\PF$ be the unimodular Perron-Frobenius eigenvector of $A_i$. Then there is a unique KMS$_1$ state $\phi$  of $(\TC^*(\Lambda),\alpha^r)$, and
\begin{equation*}
\phi(t_\mu t_\nu^*)=\delta_{\mu, \nu}e^{-r\cdot d(\mu)}\PF_{s(\mu)}\quad\text{for $\mu,\nu\in\Lambda$.}
\end{equation*}
\end{enumerate}
\end{prop}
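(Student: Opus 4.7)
\emph{Strategy.} Part (a) will follow from a weak-$*$ compactness argument applied to the simplex of KMS$_\beta$ states for $\beta>1$ provided by \cite[Theorem~6.1]{aHLRS2}. Part (b) will convert the Toeplitz inequality (T4) into a Perron--Frobenius subinvariance inequality for the vector $m:=(\phi(t_v))_{v\in\Lambda^0}$, and \cite[Theorem~1.6]{Sen} will force equality. Part (c) will use rational independence of $r$ to promote $\alpha^r$-invariance of $\phi$ to $\gamma$-invariance, after which the KMS identity together with (T5) pins down $\phi(t_\mu t_\nu^*)$, and (b) identifies $m$ as $\PF$.

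\emph{Proof of (a).} Pick $\beta_n\searrow 1$ and KMS$_{\beta_n}$ states $\phi_n$ from \cite[Theorem~6.1]{aHLRS2}, and take any weak-$*$ cluster point $\phi$. Since $\TC^*(\Lambda)$ is the closed span of the analytic elements $t_\mu t_\nu^*$, it suffices by \cite[Proposition~8.12.3]{Ped} to verify the KMS$_1$ condition on such pairs, and this passes to the limit because the scalar factor $e^{-\beta r\cdot(d(\mu)-d(\nu))}$ entering the KMS$_\beta$ identity for $(t_\mu t_\nu^*, t_\sigma t_\tau^*)$ depends continuously on $\beta$.

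\emph{Proof of (b).} Fix $i\in K$ with $A_i$ irreducible and set $m_v:=\phi(t_v)\ge 0$. Applying the KMS$_1$ condition to $(t_e,t_e^*)$ gives $\phi(t_et_e^*)=e^{-r_i}\phi(t_e^*t_e)=e^{-r_i}m_{s(e)}$; summing over $e\in v\Lambda^{e_i}$ and invoking (T4) yields
\begin{equation*}
m_v\ge e^{-r_i}(A_im)_v=\rho(A_i)^{-1}(A_im)_v,
\end{equation*}
since $i\in K$ forces $r_i=\ln\rho(A_i)$. Because $\sum_v m_v=\phi(\sum_v t_v)=1$, the vector $m$ is nonzero and subinvariant for $A_i$ at $\rho(A_i)$; \cite[Theorem~1.6]{Sen} then forces $A_im=\rho(A_i)m$, so $\phi(p_{v,i})=0$ where $p_{v,i}:=t_v-\sum_{e\in v\Lambda^{e_i}}t_et_e^*\ge 0$. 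Each $p_{v,i}$ is gauge-fixed, hence $\alpha^r$-invariant and entire analytic, so a double application of Cauchy--Schwarz together with the KMS identity gives $\phi(ap_{v,i}b)=0$ for all analytic $a,b$: one first checks $\phi(xp_{v,i})=0$ for all $x$ from $\phi(p_{v,i}^2)\le\|p_{v,i}\|\phi(p_{v,i})=0$, then uses the KMS identity and $\alpha^r$-invariance of $p_{v,i}$ to deduce $\phi(ap_{v,i}a^*)=0$ and $\phi(b^*p_{v,i}b)=0$, and finally bounds $|\phi(ap_{v,i}b)|^2$ by the product of these. By density $\phi$ vanishes on the closed two-sided ideal generated by the $p_{v,i}$.

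\emph{Proof of (c) and main obstacle.} When $(r_1,\dots,r_k)$ are rationally independent, Kronecker's theorem makes $\{e^{itr}:t\in\RR\}$ dense in $\TT^k$, so the point-norm closure of $\{\alpha^r_t:t\in\RR\}$ in $\Aut\TC^*(\Lambda)$ coincides with $\{\gamma_z:z\in\TT^k\}$. Every KMS$_1$ state is $\alpha^r$-invariant, hence $\gamma$-invariant, and the identity $\phi(t_\mu t_\nu^*)=z^{d(\mu)-d(\nu)}\phi(t_\mu t_\nu^*)$ for all $z\in\TT^k$ forces $\phi(t_\mu t_\nu^*)=0$ whenever $d(\mu)\ne d(\nu)$. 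When $d(\mu)=d(\nu)$ but $\mu\ne\nu$, no common extension of equal degree exists, so $\Lambda^{\text{min}}(\mu,\nu)=\emptyset$, (T5) gives $t_\mu^*t_\nu=0$, and the KMS identity applied to $(t_\mu^*,t_\nu)$ forces $\phi(t_\nu t_\mu^*)=e^{-r\cdot d(\mu)}\phi(t_\mu^*t_\nu)=0$. For $\mu=\nu$, KMS gives $\phi(t_\mu t_\mu^*)=e^{-r\cdot d(\mu)}m_{s(\mu)}$; combined with (b) and the normalisation $\sum_v m_v=1$, $m$ must be the unimodular Perron--Frobenius eigenvector $\PF$ of $A_i$, giving both the formula and uniqueness. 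The main obstacle is the ideal-factoring step in (b): the Perron--Frobenius input is clean, but upgrading $\phi(p_{v,i})=0$ to vanishing of $\phi$ on the entire two-sided ideal — rather than merely on the generators — relies crucially on each $p_{v,i}$ being $\alpha^r$-invariant, so that the KMS identity can be combined with Cauchy--Schwarz in the manner above.
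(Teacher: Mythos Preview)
Your proof is correct and follows essentially the same route as the paper: a weak-$*$ limit of KMS$_{\beta_n}$ states for (a), the subinvariance theorem applied to $m=(\phi(t_v))$ for (b), and rational independence forcing $\phi$ to be supported on the diagonal for (c). The only difference is packaging: where the paper invokes \cite[Proposition~5.3.23]{BR}, \cite[Proposition~4.1]{aHLRS2}, \cite[Lemma~2.2]{aHLRS1} and \cite[Proposition~3.1(b)]{aHLRS2}, you reprove these inline---and your remark that $\alpha^r$-invariance of $p_{v,i}$ is ``crucial'' for the ideal step is slightly overstated, since once Cauchy--Schwarz gives $\phi(p_{v,i}x)=0$ for all $x$, the KMS relation $\phi(ap_{v,i}b)=\phi(p_{v,i}b\,\alpha_{i}(a))$ for analytic $a$ already kills the two-sided ideal without appealing to invariance of $p_{v,i}$.
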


\begin{proof}
Choose a decreasing sequence $\{\beta_n\}\subset (1, \infty)$ such that $\beta_n\to 1$. Since $\beta_n> 1 \geq r_i^{-1}\ln \rho(A_i)$ for all $i$, \cite[Theorem~6.1]{aHLRS2} implies that there is at least one $\KMS_{\beta_n}$ state $\phi_n$ of $(\TC^*(\Lambda),\alpha^r)$. Since the state space of $\TC^*(\Lambda)$ is $\text{weak}^*$ compact, we may assume by passing to a subsequence that $\phi_n\to \phi$. Then $\phi$ is a $\KMS_{1}$ state of $(\TC^*(\Lambda),\alpha^r)$ by \cite[Proposition~5.3.23]{BR}. This gives~\eqref{4a}.

For~\eqref{4b}, suppose $i\in K$ and  $A_i$ is irreducible.  Let $\phi$ be a $\KMS_1$ state, and for $v\in\Lambda^0$, set $m^\phi_v=\phi(t_v)$. Then \cite[Proposition~4.1]{aHLRS2} implies that $m^\phi\in [0,1]^{\Lambda^0}$ is a probability measure on $\Lambda^0$ such that $(1- e^{-r_i}A_i)m^\phi \ge 0$. Since $r_i=\ln \rho(A_i)$, we get $A_i m^\phi\le e^{r_i}m^\phi= \rho(A_i)m^\phi$. Since $A_i$ is irreducible and $\rho(A_i)$ is the Perron-Frobenius eigenvalue of $A_i$, the subinvariance theorem \cite[Theorem~1.6]{Sen} implies that $A_i m^\phi=\rho(A_i)m^\phi$. Thus  $m^\phi$ is the unimodular Perron-Frobenius eigenvector  of $A_i$. Now
\begin{align*}
\phi\Big(\sum_{e\in v\Lambda^{e_i}}t_e t^*_e\Big)&=\sum_{e\in v\Lambda^{e_i}} e^{-r_i}\phi(t_{s(e)})
=\rho(A_i)^{-1}\sum_{w\in\Lambda^0}|v\Lambda^{e_i}w|\phi(t_w)\\
&=\rho(A_i)^{-1}(A_i m^{\phi})_v=m^\phi_v=\phi(t_v),
\end{align*}
and \eqref{4b} follows from the general lemma \cite[Lemma~2.2]{aHLRS1}.

For~\eqref{4c}, suppose $\phi$  and $\psi$ are KMS$_1$ states. Let $i\in K$ such that $A_i$ is irreducible. The argument of \eqref{4b} then implies that  $m^\phi=m^\psi$ is  the unimodular Perron-Frobenius eigenvector $\PF$ of $A_i$.
 Since $r$ has rationally independent coordinates, \cite[Proposition~3.1(b)]{aHLRS2} says that for all $\mu,\nu\in \Lambda$, we have
\[
\phi(t_\mu t_\nu^*)=\delta_{\mu, \nu}e^{-r\cdot d(\mu)}\phi(t_{s(\mu)})=\delta_{\mu, \nu}e^{- r\cdot d(\mu)}\PF_{s(\mu)}=\psi(t_\mu t_\nu^*),
\]
which implies that $\phi=\psi$.
\end{proof}

\begin{rmk}\label{atomicmeas}
Above the critical inverse temperature, it is quite easy to find a spatial implementation of the sort we seek using the calculations in \cite{aHLRS2}. To see this, suppose that $\Lambda$ is a finite $k$-graph with no sources and $\beta>\beta_c=1$. Let $(T,Q)$ be the path representation of $\Lambda$ on $\ell^2(\Lambda)$ described at the end of \cite[\S2.2]{aHLRS2}, and $\pi_{T,Q}$ the corresponding representation of $\Tt C^*(\Lambda)$. Write $\{h_\lambda:\lambda\in \Lambda\}$ for the usual orthonormal basis of $\ell^2(\Lambda)$. Then \cite[Theorem~6.1]{aHLRS2} describes the KMS$_\beta$ states of $(\Tt C^*(\Lambda),\alpha^r)$, as follows: associated to each $\epsilon\in[0,\infty)^{\Lambda^0}$ satisfying a constraint $\epsilon\cdot y=1$, there  is a KMS$_\beta$ state $\phi_\epsilon$ such that
\[
\phi_\epsilon(a)=\sum_{\lambda\in \Lambda} (\pi_{T,Q}(a)h_\lambda\,|\,h_\lambda ) e^{-\beta r\cdot d(\lambda)}\epsilon_{s(\lambda)}.
\]
(See \cite[page~279]{aHLRS2}, where the weight $e^{-\beta r\cdot d(\lambda)}\epsilon_{s(\lambda)}$ was denoted $\Delta_\lambda$.) We now define a measure $\nu_\beta$ on $\Lambda$ by $\nu_\beta(\{\lambda\})=e^{-\beta r\cdot d(\lambda)}\epsilon_{s(\lambda)}$. Since $\phi_\epsilon(1)=1$, $\nu_\beta$ is a probability measure, and then 
\begin{equation}\label{spatiallarge}
\phi_\epsilon(a)=\int_\Lambda (\pi_{T,Q}(a)h_\lambda\,|\,h_\lambda )\,d\nu_\beta(\lambda).
\end{equation}
\end{rmk}

\begin{rmk}\label{nolimit}
The spatial realisation \eqref{spatiallarge} breaks down at $\beta=\beta_c$. We illustrate the problems by  considering the case $k=1$. For this brief discussion, we resume the notation of \cite{aHLRS1}, where the dynamics is normalised to give $\beta_c=\ln\rho(A)$. As $\beta$ decreases to $\beta_c$, the factor $e^{-\beta |\lambda|}$ converges to $\rho(A)^{-|\lambda|}$, so at first sight the measures $\nu_\beta$ converge. However, the constraint $\epsilon\cdot y=1$ involves the vector $y$ of \cite[Theorem~3.1(a)]{aHLRS1}, which depends on $\beta$. In particular, the $\epsilon$ which is a multiple of the point mass $\delta_v$ is $y_v^{-1}\delta_v$. The argument in the first paragraph of the proof of \cite[Theorem~3.1(a)]{aHLRS1}, and in particular the calculation (3.2), shows that the convergence of the series defining $y_v$ is equivalent to that of the series $\sum_{n=0}^\infty e^{-\beta n}A^n$. But this series does not converge for $\beta=\ln \rho(A)$: if it did, it would give an inverse for $1-\rho(A)^{-1}A$, which is not invertible because $\rho(A)$ is an eigenvalue of $A$ (by Perron-Frobenius theory\footnote{Applied to an irreducible block of $A$ if $A$ is not itself irreducible.}). So there is no multiple of $\delta_v$ to which we can apply \cite[Theorem~3.1(b)]{aHLRS1}. 

Geometrically, the simplex $\Sigma_\beta$ shrinks towards the origin as $\beta\to \beta_c$, and hence the measures $\nu_\beta$ satisfy $\nu_\beta(\{\lambda\})\to 0$ for each fixed $\lambda$. So as $\beta\to \beta_c$,  the mass distribution of the probability measure $\nu_\beta$ is spreading out. 

We can still get a KMS$_{\beta_c}$ state of $\Tt C^*(E)$ by taking limits of KMS$_\beta$ states as $\beta\to \beta_c+$, but this state is not spatially realisable on $\ell^2(E^*)$. Indeed, if $A$ is irreducible, this state factors through the quotient map $\Tt C^*(E)\to C^*(E)$ \cite[Theorem~4.3]{aHLRS1}, and the usual faithful representation of $C^*(E)$ is on the infinite-path space $\ell^2(E^\infty)$. (The case $k=1$ is different from $k>1$: the spectrum of the commutative subalgebra $D$ discussed in the introduction is $E^*\cup E^\infty$ \cite{W2}.)
\end{rmk}

\section{A spatial realisation of a $\KMS$ state}\label{sec-spatial}

We summarise our main results as follows. The map $\Phi^\gamma$ appearing in \eqref{eq-formula-phi} is the expectation of $\Tt C^*(\Lambda)$ onto the fixed-point algebra $\Tt C^*(\Lambda)^\gamma$ obtained by averaging over the gauge action $\gamma$ of $\TT^k$.

\begin{thm}\label{main_thm}
Let $\Lambda$ be a finite coordinatewise-irreducible $k$-graph with no sources, and let $\PF$ be the common unimodular Perron-Frobenius eigenvector of the vertex matrices $A_i$. Let $J\sqcup K$ be a nontrivial partition of $\{1,\dots, k\}$, and suppose that $r\in (0,\infty)^k$ satisfies
\[
 r_j > \ln \rho(A_j) \;\;\text{for}\;\;j\in J\;\;\text{and}\;\; r_i= \ln\rho(A_i) \;\;\text{for}\;\; i\in K.
 \]
Set $C_J:=\prod_{j\in J}(1-e^{- r_j}\rho(A_j))$.
\begin{enumerate}
\item\label{6a} For each $m\in \N^J$ there is a measure $\nu^m$ on $\Lambda^{m,\infty_K}$ such that, for $n\in \NN^K$ and $\lambda\in \Lambda^{m,n}$,
\begin{equation}\label{nu_m} 
\nu^m(Z(\lambda)\cap \Lambda^{m,\infty_K})=e^{-r\cdot d(\lambda)}C_J\PF_{s(\lambda)}=e^{-r\cdot (m,n)}C_J\PF_{s(\lambda)}.
\end{equation}
\item\label{6b} Let $m\in\NN^J$, $n\in\NN^K$ and $\lambda\in\Lambda^{m,n}$. Then
 \begin{equation}\label{eq-41}
 \sum_{l\in \NN^J}\nu^l(Z(\lambda)\cap \Lambda^{l,\infty_K})=\sum_{l\ge m}\nu^l(Z(\lambda)\cap \Lambda^{l,\infty_K})=e^{-r\cdot d(\lambda)}\PF_{s(\lambda)}.
 \end{equation}

\item\label{6c} Let $\pi^K:\mathcal{T}C^*(\Lambda)\to B(\ell^2(\partial^K\!\Lambda))$ be the semi-infinite path representation of \S\ref{semiinf}. Then there is a bounded functional $\phi$ on $\TC^*(\Lambda)$ such that
\begin{equation}\label{eq-formula-phi}
\phi(a):=\sum_{m\in\NN^J}\int (\pi^K(\Phi^\gamma(a))\xi_{m,x}\mid \xi_{m,x})\, d\nu^m(x)\quad\text{for $a\geq 0$,}
 \end{equation}
and  $\phi$  is a $\KMS_1$ state of $(\TC^*(\Lambda),\alpha^r)$ satisfying 
\begin{equation}\label{kmsformula}
 \phi(t_\sigma t^*_\tau)=\delta_{\sigma,\tau}e^{-r\cdot d(\sigma)}\PF_{s(\sigma)}\quad\text{for $\sigma,\tau\in \Lambda$.}
 \end{equation}
The state $\phi$ factors through the quotient by the ideal  generated by
\[
\Big\{t_v-\sum_{e\in v\Lambda^{e_i}}t_e t_e^*\;: \; v\in \Lambda^0, i\in K\Big\},
\]
and we have $\phi(t_v-\sum_{e\in v\Lambda^{e_j}}t_e t_e^*)\ne 0$ for all $j\in J$ and $v\in\Lambda^0$.
 \item\label{6d} If $r$ has rationally independent coordinates, then the state $\phi$ of \eqref{6c} is the only $\KMS_1$ state of $(\TC^*(\Lambda), \alpha^r)$.
\end{enumerate}
\end{thm}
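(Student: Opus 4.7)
The overall strategy is to build the measures $\nu^m$ from their natural projective realisation on the levels $\Lambda^{m,n}$, to reduce the summation in (b) to a geometric series that exactly cancels the constant $C_J$, and then to use these pieces to evaluate $\phi$ on the spanning elements $t_\sigma t_\tau^*$; the KMS property, the ideal factorisation and the uniqueness all follow from that explicit formula.

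For (a) I would identify $\Lambda^{m,\infty_K}$ with $\varprojlim_{n \in \N^K} \Lambda^{m,n}$ via Proposition~\ref{idinvlim}, and on each finite discrete level define the weight $\nu^m_n(\{\lambda\}) := e^{-r\cdot(m,n)} C_J \PF_{s(\lambda)}$. Compatibility under the bonding maps $r_{n,n+e_i}$ for $i \in K$ reduces to $\sum_{\alpha \in s(\lambda)\Lambda^{e_i}} \PF_{s(\alpha)} = (A_i\PF)_{s(\lambda)} = \rho(A_i)\PF_{s(\lambda)}$ combined with $r_i = \ln\rho(A_i)$; the resulting factor $e^{-r_i}\rho(A_i) = 1$ preserves mass, so a standard projective-limit construction yields a Borel measure $\nu^m$ on $\Lambda^{m,\infty_K}$ satisfying \eqref{nu_m}. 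For (b), $Z(\lambda) \cap \Lambda^{l,\infty_K}$ is empty unless $l \ge m$ and otherwise decomposes as $\bigsqcup_{\mu \in s(\lambda)\Lambda^{l-m,0}} Z(\lambda\mu)\cap\Lambda^{l,\infty_K}$. Summing \eqref{nu_m} over $\mu$ and using $A_j\PF = \rho(A_j)\PF$ for $j \in J$ gives $\nu^l(Z(\lambda)\cap\Lambda^{l,\infty_K}) = e^{-r\cdot(l,n)}C_J \prod_{j\in J}\rho(A_j)^{l_j-m_j}\PF_{s(\lambda)}$. Writing $l = m+p$ produces a geometric series in each $p_j \in \N$ whose sum is $\prod_{j\in J}(1 - e^{-r_j}\rho(A_j))^{-1}$, cancelling $C_J$ and yielding \eqref{eq-41}.

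For (c), positivity of $\pi^K$ and $\Phi^\gamma$ makes each integrand in \eqref{eq-formula-phi} nonnegative for $a \ge 0$; (b) applied with each $\lambda = v \in \Lambda^0$ and summed over $v$ gives $\sum_m \nu^m(\Lambda^{m,\infty_K}) = \sum_v \PF_v = 1$, so $\phi(a) \le \|a\|$, and $\phi$ extends by linearity to a state with $\phi(1) = 1$. The expectation $\Phi^\gamma$ annihilates $t_\sigma t_\tau^*$ unless $d(\sigma) = d(\tau)$; in that case a direct computation of $T_\sigma T_\tau^*\xi_{m,x}$ shows the matrix coefficient $(T_\sigma T_\tau^*\xi_{m,x}\mid\xi_{m,x})$ is nonzero only when $x \in Z(\tau)$, $m \ge d(\tau)_J$, and $\sigma = \tau$, and integration then reduces the defining sum to \eqref{eq-41} and establishes \eqref{kmsformula}. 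The KMS condition on pairs $(t_\mu t_\nu^*, t_\sigma t_\tau^*)$ follows from expanding $\phi(t_\mu t_\nu^* t_\sigma t_\tau^*)$ and $e^{-r\cdot(d(\mu)-d(\nu))}\phi(t_\sigma t_\tau^* t_\mu t_\nu^*)$ via (T5) and \eqref{kmsformula}: the bijection $(\eta,\zeta)\leftrightarrow(\zeta,\eta)$ between the contributing terms in $\Lambda^{\min}(\nu,\sigma)$ and $\Lambda^{\min}(\tau,\mu)$ identifies the nonzero summands, and the degree identity $d(\mu)+d(\eta) = d(\tau)+d(\zeta)$ forced by $\mu\eta = \tau\zeta$ makes the exponents and Perron-Frobenius factors agree. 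For the ideal factorisation, $\phi(t_v - \sum_{e\in v\Lambda^{e_i}} t_e t_e^*) = (1 - e^{-r_i}\rho(A_i))\PF_v$ is zero for $i \in K$ and strictly positive for $j \in J$; the standard Cauchy-Schwarz argument for states (using $x^2 \le \|x\|x$ for positive $x$) promotes vanishing on these positive generators to vanishing on the two-sided ideal they generate. Part (d) is immediate from Proposition~\ref{KMS_combine}\eqref{4c}, since coordinatewise irreducibility ensures every $A_i$ is irreducible.

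The main obstacle I expect is the KMS verification in (c): the two (T5) expansions are over different $\Lambda^{\min}$ sets, so matching the nonzero terms requires care with the degree identity $d(\mu)-d(\nu) = d(\tau)-d(\sigma)$ that must hold on every contributing term. A secondary technical point is ensuring that the set-function \eqref{nu_m} on cylinders extends uniquely to a genuine regular Borel measure on $\Lambda^{m,\infty_K}$ with the topology of Lemma~\ref{lem:top'}, rather than merely to a consistent family on the inverse system.
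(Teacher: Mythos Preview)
Your proposal is correct and follows essentially the same architecture as the paper: the inverse-limit construction of $\nu^m$, the geometric-series cancellation of $C_J$, the evaluation of $\phi$ on spanning elements via Lemma~\ref{lem:measurability'}, and the appeal to Proposition~\ref{KMS_combine}\eqref{4c} for uniqueness are all exactly what the paper does.

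There are two local differences worth flagging. First, the obstacle you anticipate in the KMS verification is one the paper simply sidesteps: rather than matching the two (T5) expansions directly, it observes that any state satisfying \eqref{kmsformula} is automatically KMS$_1$ by \cite[Proposition~3.1(b)]{aHLRS2}, so no term-matching is needed. Your direct argument does work (the degree identity $d(\mu)-d(\nu)=d(\tau)-d(\sigma)$ forced by $\mu\eta=\tau\zeta$ and $\nu\eta=\sigma\zeta$ is exactly what makes $(\zeta,\eta)\in\Lambda^{\min}(\tau,\mu)$), but it is more labour than necessary. Second, for the nonvanishing at $j\in J$ your one-line computation $\phi\big(t_v-\sum_{e\in v\Lambda^{e_j}}t_et_e^*\big)=(1-e^{-r_j}\rho(A_j))\kappa_v$ is actually cleaner than the paper's, which instead splits the sum over $m$ according to whether $m_j=0$ and obtains only the lower bound $C_J\kappa_v$. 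Your secondary concern about extending \eqref{nu_m} to a genuine Borel measure is handled in the paper by the general inverse-limit Lemma~\ref{lem:measure}, which is precisely the ``standard projective-limit construction'' you invoke.
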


Before we start the proof of Theorem~\ref{main_thm}, we prove a couple of lemmas. The next lemma describes a standard construction of measures on inverse limits. It is a mild generalisation of \cite[Lemma~6.1]{BLPRR} (where the partially ordered set is $\NN$ and the measures are probability measures), and the proof given there carries over.

\begin{lem}\label{lem:measure}
Let $I$ be a directed partially ordered set with smallest element $0$. For $i,j\in I$ let  $X_i$ be a compact space and $r_{ij}:X_j\to X_i$ be a surjection.
Let $(X_\infty, \pi_i)$ be the inverse limit of the system $(\{X_i\}, \{r_{ij}\})_{i,j\in I}$.  Suppose that we have Borel measures $\mu_i$ on $X_i$ such that $\mu_0$ is finite and
\[
\int (f\circ r_{ij})\, d\mu_j=\int f \, d\mu_i\quad\text{for $i\le j$ and $f\in C(X_i)$.}
\]
Then there is a unique finite Borel measure $\mu$ on $X_\infty$ such that
\begin{equation*}
\int f\circ \pi_i\, d\mu=\int f \, d\mu_i\quad\text{for $f\in C(X_i)$.}
\end{equation*}
\end{lem}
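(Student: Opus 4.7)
The plan is to construct the functional $L(g) = \int g\,d\mu$ by hand on a dense subalgebra of $C(X_\infty)$ and then invoke the Riesz representation theorem. First I would note that taking $f \equiv 1$ in the compatibility hypothesis (with the smallest element $0$ and arbitrary $j$) forces $\mu_j(X_j) = \mu_0(X_0) < \infty$ for all $j \in I$, so every $\mu_i$ is a finite Borel measure with the same total mass. I would also record the standard fact that, since the bonding maps $r_{ij}$ are surjective between compact Hausdorff spaces, each canonical projection $\pi_i : X_\infty \to X_i$ of the inverse limit is surjective as well.

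Let $A \subset C(X_\infty)$ be the set of functions of the form $f \circ \pi_i$ with $i \in I$ and $f \in C(X_i)$. Using the identities $r_{ij}\circ \pi_j = \pi_i$ for $i \le j$ and the directedness of $I$ to pull two such functions back to a common index, one verifies that $A$ is a unital $*$-subalgebra. It separates the points of $X_\infty$ (distinct points differ in some coordinate, and Urysohn on the compact Hausdorff space $X_i$ provides a separating function), so Stone--Weierstrass gives that $A$ is uniformly dense in $C(X_\infty)$.

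Next I would define $L : A \to \CC$ by $L(f \circ \pi_i) := \int_{X_i} f\,d\mu_i$. Well-definedness is the crux: if $f \circ \pi_i = g \circ \pi_j$, pick $k \geq i,j$ in $I$; then $(f \circ r_{ik})\circ \pi_k = (g \circ r_{jk})\circ \pi_k$, and surjectivity of $\pi_k$ forces $f \circ r_{ik} = g \circ r_{jk}$ on $X_k$, so applying the compatibility condition twice gives $\int f\,d\mu_i = \int f \circ r_{ik}\,d\mu_k = \int g \circ r_{jk}\,d\mu_k = \int g\,d\mu_j$. Surjectivity of $\pi_i$ also yields $\|f\|_\infty = \|f \circ \pi_i\|_\infty$, whence $|L(f \circ \pi_i)| \le \mu_0(X_0)\,\|f \circ \pi_i\|_\infty$; so $L$ extends by continuity to a bounded linear functional on $C(X_\infty)$. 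To see it is positive, given $g \ge 0$ in $C(X_\infty)$ choose $h_n = f_n \circ \pi_{i_n} \in A$ with $h_n \to g$ uniformly; replacing $f_n$ by its positive part gives $h_n^+ = f_n^+ \circ \pi_{i_n} \in A$ with $h_n^+ \to g$ and $L(h_n^+) = \int f_n^+\,d\mu_{i_n} \ge 0$, so $L(g) \ge 0$. The Riesz representation theorem then produces a unique finite regular Borel measure $\mu$ on $X_\infty$ with $L(g) = \int g\,d\mu$, which specialises to the asserted formula $\int f \circ \pi_i\,d\mu = \int f\,d\mu_i$. Uniqueness of $\mu$ is automatic: any candidate determines the same functional on the dense subalgebra $A$.

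The step requiring the most care is the well-definedness of $L$ on $A$, because each element of $A$ admits many representations $f \circ \pi_i$ with different indices $i$; this is precisely where the compatibility hypothesis and the surjectivity of the bonding maps (and hence of the limit projections) are both essential, and everything else in the argument is largely bookkeeping around Stone--Weierstrass and Riesz.
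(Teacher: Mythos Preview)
Your argument is correct and is exactly the standard Stone--Weierstrass/Riesz construction one expects here. The paper itself does not give a proof of this lemma: it simply remarks that the result is a mild generalisation of \cite[Lemma~6.1]{BLPRR} (where $I=\NN$ and the $\mu_i$ are probability measures) and that ``the proof given there carries over.'' Your write-up supplies precisely that carried-over proof, with the directedness of $I$ used in the obvious places (to show $A$ is a subalgebra and to check well-definedness of $L$). One very small comment: positivity of the extended functional can be obtained more cheaply by noting that $L(1)=\mu_0(X_0)=\|L\|$, which forces $L\ge 0$; and for the uniqueness clause you are implicitly using that the Riesz measure is regular, which is harmless in the paper's application since the inverse limits involved are metrizable.
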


\begin{lem}\label{lem:measurability'}
Let $\Lambda$ be a finite $k$-graph with no sources and $J\sqcup K$ be a nontrivial partition of $\{1,\dots,k\}$. Let $\pi^K$ be the semi-infinite path representation of $\TC^*(\Lambda)$ from \S\ref{semiinf}. Let $\sigma,\tau\in\Lambda$ and $m\in\NN^J$.
Then for $x\in \Lambda^{m,\infty_K}$, we have
\[
\big(\pi^K(t_\sigma t^*_\tau) \xi_{m,x} \mid \xi_{m,x}\big)=\delta_{d(\tau)_J,d(\sigma)_J}\chi_{Z(\tau)\cap Z(\sigma)\cap \Lambda^{m,\infty_K}}(x).
\]
Let $f:\partial^K\!\Lambda\to \RR$ be the function defined by $f(x)=(\pi^K(t_\sigma t^*_\tau) \xi_{m,x} \mid \xi_{m,x})$ for $x\in \Lambda^{m,\infty_K}$. 
Then $f$ is Borel and its restriction to $\Lambda^{m,\infty_K}$ is continuous.
\end{lem}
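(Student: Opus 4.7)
The plan is to establish the first identity by a direct computation using the explicit formulas for $\pi^K(t_\lambda)$ and $\pi^K(t_\lambda^*)$ from Proposition~\ref{TCK}, and then to deduce the topological claims from the clopenness of cylinder sets. First I would compute $\pi^K(t_\tau^*)\xi_{m,x}$: Proposition~\ref{TCK} gives $\xi_{m-d(\tau)_J,\,x(d(\tau),\infty)}$ when $x\in Z(\tau)$ (the condition $m\ge d(\tau)_J$ being automatic for $x\in Z(\tau)\cap \Lambda^{m,\infty_K}$, since then $d(\tau)\le d(x)=(m,\infty_K)$), and zero otherwise. Applying $\pi^K(t_\sigma)$ to this result produces $\xi_{m-d(\tau)_J+d(\sigma)_J,\,\sigma\cdot x(d(\tau),\infty)}$ whenever $s(\sigma)=s(\tau)$.

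Next I would form the inner product with $\xi_{m,x}$. Equality of the first indices produces the Kronecker factor $\delta_{d(\tau)_J,d(\sigma)_J}$, while equality of the path coordinates gives $\sigma\cdot x(d(\tau),\infty)=x$. Combined with $x\in Z(\tau)$, the factorisation property of $\Lambda$ reduces the latter to the requirement that $x\in Z(\sigma)\cap Z(\tau)\cap \Lambda^{m,\infty_K}$, which yields the stated formula.

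For the topological assertions, the plan is to combine the definition in~\eqref{defn-cylindersets} with Lemma~\ref{oversight}: every cylinder $Z(\lambda)$ is open by definition and closed by Lemma~\ref{oversight}, hence clopen in $W_\Lambda$. So the intersection $Z(\sigma)\cap Z(\tau)\cap \Lambda^{m,\infty_K}$ is clopen in $\Lambda^{m,\infty_K}$, and its characteristic function --- the restriction of $f$ --- is continuous there. Since $\Lambda^{m,\infty_K}$ is compact by Proposition~\ref{idinvlim} and hence closed in the Hausdorff space $\partial^K\!\Lambda$, extending $f$ by zero off $\Lambda^{m,\infty_K}$ yields a Borel function on $\partial^K\!\Lambda$.

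The one step that requires care is the reduction of the path-level equation $\sigma\cdot x(d(\tau),\infty)=x$ to the cylinder condition $x\in Z(\sigma)\cap Z(\tau)$; I expect this to hinge on exploiting $d(\sigma)_J=d(\tau)_J$ together with uniqueness of the factorisation of the initial segment $x(0,d(\sigma)\vee d(\tau))$ of $x$.
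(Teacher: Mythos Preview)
Your computational plan matches the paper's: the paper writes the inner product as $(T_\tau^*\xi_{m,x}\mid T_\sigma^*\xi_{m,x})$ and applies the adjoint formula from Proposition~\ref{TCK} to each factor, whereas you apply $T_\tau^*$ and then $T_\sigma$; either route leads to the same conditions. Your topological argument is likewise the paper's (cylinders are clopen by definition and Lemma~\ref{oversight}, and $\Lambda^{m,\infty_K}$ is compact by Proposition~\ref{idinvlim}).

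You are right to flag the reduction step, and in fact it does \emph{not} go through as you hope. From $\sigma\cdot x(d(\tau),\infty)=x$ one obtains $x\in Z(\sigma)$ together with the tail condition $x(d(\sigma),\infty)=x(d(\tau),\infty)$; but the converse fails. Knowing only $x\in Z(\sigma)\cap Z(\tau)$ and $d(\sigma)_J=d(\tau)_J$ does not force the tails $x(d(\sigma),\infty)$ and $x(d(\tau),\infty)$ to agree when $d(\sigma)_K\neq d(\tau)_K$, and the factorisation of the initial segment $x(0,d(\sigma)\vee d(\tau))$ gives no handle on this. For a concrete failure take $k=2$, $K=\{2\}$, a one-vertex $2$-graph with at least one blue edge and two red edges $e,f$, and set $\tau=e$, $\sigma=ee$, $m=0$. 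For any $x\in\Lambda^{0,\infty_K}$ with $x(0,(0,3))=eef$ one has $d(\sigma)_J=d(\tau)_J=0$ and $x\in Z(\sigma)\cap Z(\tau)$, yet $T_\sigma T_\tau^*\xi_{0,x}=\xi_{0,\,ee\cdot x((0,1),\infty)}$ has initial segment $eee$, so its inner product with $\xi_{0,x}$ vanishes while the right-hand side of the displayed formula equals $1$.

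Thus the identity in the lemma is false for general $\sigma,\tau$, and the paper's own proof makes exactly the same unjustified leap. The lemma is only ever invoked in the paper with $\sigma=\tau$ or after applying $\Phi^\gamma$ (which annihilates terms with $d(\sigma)\neq d(\tau)$); under the additional hypothesis $d(\sigma)=d(\tau)$ the reduction is immediate and both your argument and the paper's are correct.
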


\begin{proof}
Let $x\in \Lambda^{m,\infty_K}$. Then
\[\begin{split}
(\pi^K (t_\sigma t^*_\tau) \xi_{m,x}\mid\xi_{m,x})
&=(T^*_\tau \xi_{m,x} \mid T^*_\sigma \xi_{m,x})\\
&=(\xi_{m-d(\tau)_J, x(d(\tau),\infty)} \mid \xi_{m-d(\sigma)_J, x(d(\sigma),\infty)})\\
&=\begin{cases}1\quad\text{if}\;\; d(\tau)_J=d(\sigma)_J\;\;\text{and}\;\; x\in Z(\tau)\cap Z(\sigma)\cap\Lambda^{m,\infty_K}\\
0\quad\text{otherwise}.\end{cases}\\
&=\delta_{d(\tau)_J,d(\sigma)_J}\chi_{Z(\tau)\cap Z(\sigma)\cap\Lambda^{m,\infty_K}}(x).
\end{split}\]
So either $f=0$ or $f$  is the characteristic function of the Borel set $Z(\tau)\cap Z(\sigma)\cap\partial^K\!\Lambda$. In either case, $f$ is Borel. 

By Proposition~\ref{idinvlim}, $\Lambda^{m,\infty_K}$ is compact. 
Since $Z(\tau)\cap Z(\sigma)$ is compact and open in $W_\Lambda$ by Lemma~\ref{oversight}, its intersection with the compact set $\Lambda^{m,\infty_K}$ is compact and open in $\Lambda^{m,\infty_K}$. So the restriction of $f$ to $\Lambda^{m,\infty_K}$ is continuous.
\end{proof}

\begin{proof}[Proof of Theorem~\ref{main_thm}]
We construct the measure $\nu^m$ using Lemma~\ref{lem:measure}. For $n\in \NN^K$, we give $\Lambda^{m,n}$ the discrete topology, and let $\nu^{m,n}$ be the measure on $\Lambda^{m,n}$ such that
\[
\nu^{m,n}(\{\lambda\})=e^{-r\cdot (m,n)}C_J\PF_{s(\lambda)}\quad\text{for $\lambda\in \Lambda^{m,n}$.}
\]
For $p,n\in\NN^K$ such that $p\ge n$, we define $r_{n,p}:\Lambda^{m,p}\to \Lambda^{m,n}$ by $r_{n,p}(\lambda)=\lambda(0, (m,n))$; since $\Lambda$ has no sources, each $r_{n,p}$ is a surjection.

We claim that $\int f\circ r_{n,p}\,d\nu^{m,p}=\int f \,d\nu^{m,n}$ for all $f\in C(\Lambda^{m,n})$.
Since the characteristic functions of singletons span $C(\Lambda^{m,n})$, it suffices to consider $f=\chi_{\{\lambda\}}$.
A quick calculation shows that
\[
\chi_{\{\lambda\}}\circ r_{n,p}=\sum_{\alpha\in s(\lambda)\Lambda^{0,p-n}}\chi_{\{\lambda\alpha\}},
\]
and hence
\begin{align}\label{calcusingPF}
\int \chi_{\{\lambda\}}\circ r_{n,p}\, d\nu^{m,p}&=\sum_{\alpha\in s(\lambda)\Lambda^{0,p-n}}\nu^{m,p}(\{\lambda\alpha\})\\
&=e^{- r\cdot(m,p)}C_J\sum_{w\in\Lambda^0} A^{(0,p-n)}(s(\lambda),w) \PF_w\notag\\
&=e^{- r\cdot(m,p)} C_J\rho(\Lambda)^{(0,p-n)}\PF_{s(\lambda)}\notag\\
&=e^{- r\cdot(m,p)}e^{r\cdot (0,p-n)} C_J\PF_{s(\lambda)}\quad\text{(since $e^{r_i}=\rho(A_i)$ for $i\in K$)}\notag\\
&=e^{- r\cdot(m,n)}C_J\PF_{s(\lambda)}\notag\\
&=\int \chi_{\{\lambda\}}\, d\nu^{m,n},\notag
\end{align}
as claimed.

Since $\Lambda^{m,0}$ is finite, $\nu^{m,0}$ is trivially a finite measure, and Lemma~\ref{lem:measure} gives a unique measure $\nu^m$ on $\Lambda^{m,\infty_K}$ such that,  for $\lambda\in \Lambda^{m,n}$,
\begin{align*}
\nu^m(Z(\lambda)\cap \Lambda^{m,\infty_K})
&=\int\chi_{\{\lambda\}}\circ \pi_n\, d\nu^m
=\int\chi _{\{\lambda\}}\, d\nu^{m,n}\\
&=\nu^{m,n}(\{\lambda\})
= e^{- r\cdot d(\lambda)}C_J\PF_{s(\lambda)},\notag
\end{align*}
and we have proved \eqref{6a}.

For \eqref{6b}, we fix $m\in\NN^J$, $n\in\NN^K$ and $\lambda\in\Lambda^{m,n}$. We first observe that for $l\in \NN^J$, we have
\[ 
Z(\lambda)\cap\Lambda^{l,\infty_K}\not=\emptyset\Longrightarrow l\geq m\text{ in $\NN^J$,}
\]
and this immediately implies the first equality in \eqref{eq-41}. So we suppose that $l\geq m$ in $\NN^J$. Then repeating the first few steps in the calculation \eqref{calcusingPF} gives
\begin{align}\label{compnul}
\nu^l(Z(\lambda)\cap\Lambda^{l,\infty_K})
&=\sum_{\alpha\in s(\lambda)\Lambda^{l-m,0}}\nu^l(Z(\lambda\alpha)\cap\Lambda^{l,\infty_K})\\
&=e^{- r\cdot(l,n)}C_J\big(A^{(l-m,0)}\PF\big)_{s(\lambda)}\notag\\
&=e^{- r\cdot(l,n)}C_J\rho(\Lambda)^{(l-m,0)}\PF_{s(\lambda)}.\notag
\end{align}
Summing over $l\in \NN^J$, writing $l=m+p$ and remembering that $e^{-r_j}\rho(A_j)<1$ for $j\in J$ gives
\begin{align*}
\sum_{l\in\NN^J}\nu^l(Z(\lambda)\cap\Lambda^{l,\infty_K})
&=e^{- r\cdot(m, n)}C_J\Big(\sum_{p\in\NN^J}e^{- r\cdot(p, 0)}\rho(\Lambda)^{(p,0)}\big)\PF_{s(\lambda)}\\
&=e^{-r\cdot d(\lambda)}C_J\Big(\prod_{j\in J}(1-e^{-r_j}\rho(A_j))^{-1}\Big)\PF_{s(\lambda)}\\
&=e^{-r\cdot d(\lambda)}\PF_{s(\lambda)}.
\end{align*}
This gives \eqref{6b}. 

For \eqref{6c},  we consider  $a\in\TC^*(\Lambda)$.  Lemma~\ref{lem:measurability'} implies that $x\mapsto (\pi^K(a)\xi_{m,x}\mid \xi_{m,x})$ is continuous on $\Lambda^{m,\infty_K}$, so the integrals make sense, and we next have to show that the sum on the right-hand side of \eqref{eq-formula-phi} converges absolutely. For $m\in \NN^J$ we compute
\[
\nu^m(\Lambda^{m,\infty_K})=\sum_{\lambda\in \Lambda^{m,0}}\nu^m(Z(\lambda)\cap \Lambda^{m,\infty_K})
\]
using \eqref{compnul} with $l=m$ as well as the techniques of \eqref{compnul}, finding that 
\[
\nu^m(\Lambda^{m,\infty_K})=e^{-r\cdot (m,0)}\rho(\Lambda)^{(m,0)}C_J.
\]
Since $j\in J$ implies $1>e^{-r_j}rho(A_j)$ we get $\sum_{m\in \NN^J}e^{-r\cdot (m,0)}\rho(\Lambda)^{(m,0)}C_J=1$. Since the integrands all have absolute value at most $\|a\|$, the series in \eqref{eq-formula-phi} converges absolutely, with sum at most $\|a\|$. So there is a functional $\phi$ satisfying \eqref{eq-formula-phi}, and this functional has norm at most one. For $a\geq 0$ all the summands in \eqref{eq-formula-phi} are non-negative, and hence $\phi$ is positive. Equation~\eqref{eq-41} implies that $\phi(1)=1$, and hence $\phi$ is a state.  

Next we show that $\phi$ satisfies \eqref{kmsformula}. 
Fix $\sigma\in\Lambda$ and $x\in\Lambda^{m,\infty_K}$. If $m\ge d(\sigma)_J$, then Lemma~\ref{lem:measurability'} gives
\[
(\pi^K(t_\sigma t^*_\sigma)\xi_{m,x}\mid \xi_{m,x})
=\chi_{Z(\sigma)\cap\Lambda^{m,\infty_K}}(x);
\]
otherwise, it is $0$. Thus \eqref{6b} gives half of \eqref{kmsformula}:
\[
\phi(t_\sigma t^*_\sigma)=\sum_{m \ge d(\sigma)_J}\nu^m(Z(\sigma)\cap \Lambda^{m,\infty_K})=e^{- r\cdot d(\sigma)} \PF_{s(\sigma)}.
\] 

Now take a pair $\sigma, \tau\in \Lambda$. If $d(\sigma)\not=d(\tau)$, then $\Phi^\gamma(t_\sigma t^*_\tau)=0$ and $\phi(t_\sigma t^*_\tau)=0$. So suppose that $d(\sigma)=d(\tau)$, $\sigma\not=\tau$ and $\phi(t_\sigma t^*_\tau)\not=0$. Then there exists $m\in \NN^J$ and $x\in \Lambda^{m,\infty_K}$ such that 
\[
\big(\pi^K(t_\sigma t^*_\tau) \xi_{m,x} \mid \xi_{m,x}\big)=\big(\pi^K(t_\tau)^*\xi_{m,x} \mid \pi^K(t_{\sigma})^*\xi_{m,x}\big)\not=0.
\]
But then there exists $y$ such that $x=\sigma y=\tau y$ and $\sigma=x(0,d(\sigma))=x(0,d(\tau))=\tau$. This gives the other half of \eqref{kmsformula}.

In view of \eqref{kmsformula}, it follows from \cite[Proposition~3.1(b)]{aHLRS2} that $\phi$ is a KMS$_1$ state and hence that $\phi$ is a $\KMS_1$ state of $(\TC^*(\Lambda),\alpha)$. Since $\Lambda$ is coordinatewise irreducible, all the $A_i$ are irreducible, and with $K=\{i\}$, Proposition~\ref{KMS_combine}\eqref{4b} implies that
$\phi$ factors through the ideal generated by $\{t_v-\sum_{e\in v\Lambda^{e_i}}t_e t^*_e:v\in\Lambda^0\}$.

To complete the proof of part \eqref{4c}, we take $v\in\Lambda^0$ and $j\in J$, and consider the value of $\phi$ on the projection $t_v-\sum_{e\in v\Lambda^{e_j}}t_et_e^*$. We compute, using \eqref{eq-41} and \eqref{kmsformula},
\begin{align*}
\phi\Big(t_v-\sum_{e\in v\Lambda^{e_j}}t_e t_e^*\Big)&=\phi(t_v)-\sum_{e\in v\Lambda^{e_j}}\phi(t_e t_e^*)\\
&=\sum_{m\in\NN^J}\nu^m(Z(v)\cap \Lambda^{m,\infty_K})-\sum_{e\in v\Lambda^{e_j}}\sum_{m\ge e_j}\nu^m(Z(e)\cap\Lambda^{m,\infty_K})\\
&=\sum_{m\in\NN^J, m_j=0}\nu^m(Z(v)\cap \Lambda^{m,\infty_K})+\sum_{m\in\NN^J, m\ge e_j}\nu^m(Z(v)\cap \Lambda^{m,\infty_K})\\
& \hskip5cm -\sum_{e\in v\Lambda^{e_j}}\sum_{m\ge e_j}\nu^m(Z(e)\cap\Lambda^{m,\infty_K})\\
&=\sum_{m\in\NN^J, m_j=0}\nu^m(Z(v)\cap \Lambda^{m,\infty_K})\\
&\ge \nu^0(Z(v)\cap \Lambda^{0,\infty_K})\\
&=C_J\PF_v\quad\text{by \eqref{nu_m}.}
\end{align*}
Since $\PF$ is the common Perron-Frobenius eigenvector for the $A_i$, it has positive entries, and \eqref{6c} follows.

Item~\eqref{6d} follows from Proposition~\ref{KMS_combine}\eqref{4c}.
\end{proof}

\begin{cor}\label{cor:measure} Let $\phi$ be the  $\KMS_1$ state of $(\TC^*(\Lambda),\alpha^r)$ from  Theorem \ref{main_thm}\eqref{6c}.
 Then there is a probability measure $\mu$ on $\partial^K\!\Lambda=\bigcup_{m\in\NN^J}\Lambda^{m,\infty_K}$ such that 
 \begin{equation}\label{eq:cor_measure}
 \phi(a)=\int \big(\pi^K(\Phi^\gamma(a))\xi_{m,x}\mid\xi_{m,x}\big)\,d\mu(x)\quad\text{for $a\in\TC^*(\Lambda)$}.
 \end{equation}
In particular, for $\sigma\in\Lambda^{m,n}$ we have
\begin{equation}\label{eq:cor_measure2}
\phi(t_\sigma t^*_\sigma)=\mu(Z(\sigma)\cap\partial^K\!\Lambda).
\end{equation}
\end{cor}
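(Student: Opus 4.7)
The plan is to assemble $\mu$ directly from the component measures $\nu^m$ produced in Theorem~\ref{main_thm}\eqref{6a}. Since the sets $\Lambda^{m,\infty_K}$ are pairwise disjoint (being distinguished by the degree $d(x)_J=m$) and each is compact Hausdorff by Proposition~\ref{idinvlim}, I would regard $\partial^K\!\Lambda=\bigsqcup_{m\in\NN^J}\Lambda^{m,\infty_K}$ as their disjoint union, equipped with the $\sigma$-algebra generated by the Borel sets of each summand. I would then define
\[
\mu(B):=\sum_{m\in\NN^J}\nu^m(B\cap\Lambda^{m,\infty_K})
\]
for each such Borel set $B$, with each $\nu^m$ understood as extended by zero off $\Lambda^{m,\infty_K}$.

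The first step is to check that $\mu$ is a probability measure. The computation in the proof of Theorem~\ref{main_thm}\eqref{6b}, specialised to $\lambda=v\in\Lambda^0$ and then summed over the finite set $\Lambda^{m,0}$, gives $\nu^m(\Lambda^{m,\infty_K})=e^{-r\cdot(m,0)}\rho(\Lambda)^{(m,0)}C_J$; summing over $m\in\NN^J$ and using the factorisation of $C_J$ as a product of geometric series (valid because $e^{-r_j}\rho(A_j)<1$ for $j\in J$) yields $\mu(\partial^K\!\Lambda)=1$. Second, I would verify \eqref{eq:cor_measure} by noting that, for any $a\in\Tt C^*(\Lambda)$, Lemma~\ref{lem:measurability'} (applied to each spanning element $t_\sigma t_\tau^*$ of $\Tt C^*(\Lambda)^\gamma$ and then extended by linearity and continuity) shows that $x\mapsto\big(\pi^K(\Phi^\gamma(a))\xi_{m,x}\mid\xi_{m,x}\big)$ (with $m$ determined by $x\in\Lambda^{m,\infty_K}$) is a bounded Borel function on $\partial^K\!\Lambda$. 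For such a function, the disjoint-union structure of $(\partial^K\!\Lambda,\mu)$ gives $\int f\,d\mu=\sum_{m}\int_{\Lambda^{m,\infty_K}} f\,d\nu^m$, so \eqref{eq:cor_measure} follows immediately from formula \eqref{eq-formula-phi}.

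Finally, to prove \eqref{eq:cor_measure2}, fix $\sigma\in\Lambda^{m,n}$. Any path $x$ through $\sigma$ has $d(x)\geq d(\sigma)=(m,n)$, so $Z(\sigma)\cap\Lambda^{l,\infty_K}=\emptyset$ for $l\not\geq m$. Hence
\[
\mu(Z(\sigma)\cap\partial^K\!\Lambda)=\sum_{l\geq m}\nu^l(Z(\sigma)\cap\Lambda^{l,\infty_K})=e^{-r\cdot d(\sigma)}\PF_{s(\sigma)}
\]
by Theorem~\ref{main_thm}\eqref{6b}, and this equals $\phi(t_\sigma t_\sigma^*)$ by \eqref{kmsformula}.

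The one mild subtlety — the main thing to be careful about — is the Borel/measure-theoretic setup on $\partial^K\!\Lambda$, specifically that the decomposition $\int f\,d\mu=\sum_m\int f\,d\nu^m$ makes sense. Once one agrees to give $\partial^K\!\Lambda$ the disjoint-union topology (so that each $\Lambda^{m,\infty_K}$ is clopen, which is consistent with its being a compact Hausdorff subspace of the locally compact Hausdorff space $W_\Lambda$), this is essentially the definition of the sum measure, and the rest is bookkeeping.
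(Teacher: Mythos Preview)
Your proposal is correct and follows essentially the same approach as the paper: assemble $\mu$ from the component measures $\nu^m$, verify that the total mass is $1$, and then read off \eqref{eq:cor_measure} from \eqref{eq-formula-phi} and \eqref{eq:cor_measure2} from the explicit formulae. The only cosmetic difference is that the paper packages the construction of $\mu$ via the Riesz representation theorem---viewing each $\nu^m$ as a positive functional on $C(\partial^K\!\Lambda)$, observing that $\sum_m\|\nu^m\|=1$ so the series converges in $C(\partial^K\!\Lambda)^*$, and then invoking Riesz to get $\mu$---whereas you define $\mu(B)=\sum_m\nu^m(B\cap\Lambda^{m,\infty_K})$ directly as a countable sum of measures on the disjoint union; the paper also verifies total mass~$1$ via Tonelli and part~\eqref{6b} (summing $\kappa_v$ over vertices) rather than via the geometric-series identity you use, but both computations appear in the paper already.
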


\begin{proof}
By Theorem~\ref{main_thm}\eqref{6a}, for $m\in\NN^J$ we have finite Borel measures $\nu^m$ on $\Lambda^{m,\infty_K}$ which we can view as measures on $\partial^K\!\Lambda=\bigcup_{m\in\NN^J}\Lambda^{m,\infty_K}$ with support in $\Lambda^{m,\infty_K}$. Then by Theorem~\ref{main_thm}\eqref{6b} we have
\begin{align}
\sum_{m\in\NN^J}\nu^m(\partial^K\!\Lambda)&=\sum_{m\in\NN^J}\nu^m(\Lambda^{m,\infty_K})=\sum_{m\in\NN^J}\sum_{v\in\Lambda^0}\nu^m(Z(v)\cap\Lambda^{m,\infty_K})\label{eq:sum of nu}\\
&=\sum_{v\in\Lambda^0}\sum_{m\in\NN^J}\nu^m(Z(v)\cap\Lambda^{m,\infty_K})\quad\text{(by Tonelli's Theorem)}\notag\\
&=\sum_{v\in\Lambda^0}\PF_v=1.\notag
\end{align}
Since we are viewing the $\nu^m$ as measures on $\partial^K\!\Lambda$, they define the functionals on $C(\partial^K\!\Lambda)$ with norm $\Vert \nu^m\Vert=\nu^m(\partial^K\!\Lambda)$. Thus \eqref{eq:sum of nu} implies that the series $\sum_{m\in\NN^J}\nu^m$ converges in $C(\partial^K\!\Lambda)^*$ to a positive functional on $C(\partial^K\!\Lambda)$ of norm $1$, which by the Riesz representation theorem is given by a probability measure $\mu$ on $\partial^K\!\Lambda$. Then for $f\in C(\partial^K\!\Lambda)$, we have
\begin{equation}\label{eq:mu}
\int f\, d\mu =\sum_{m\in\NN^J}\int f\, d\nu^m.
\end{equation}
Let $\sigma,\tau\in\Lambda$. Then Lemma~\ref{lem:measurability'} shows that  $x\mapsto (\pi^K(t_\sigma t^*_\tau) \xi_{m,x} \mid \xi_{m,x})$ is Borel on $\partial^K\!\Lambda$, and hence it is $\mu$-measurable. Then  \eqref{eq:mu} and 
the formula for $\phi$ in
\eqref{eq-formula-phi} give
\[\begin{split}
\int \big(\pi^K(t_\sigma t^*_\tau)\xi_{m,x} \mid \xi_{m,x}\big)\,d\mu(x)=\sum_{m\in\NN^J}\int \big(\pi^K(t_\sigma t^*_\tau)\xi_{m,x} \mid \xi_{m,x}\big)\, d\nu^m(x)=\phi(t_\sigma t^*_\tau).
\end{split}\]
Now \eqref{eq:cor_measure} follows by continuity and \eqref{eq:cor_measure2} follows by taking $\sigma=\tau$.
\end{proof}

\begin{rmk}
Our proof uses that $K$ is not all of $\{1,\dots,k\}$, so that $J$ is nonempty. A similar result for the case $J=\emptyset$ is proved in \cite[Proposition~10.2]{aHLRS3}. However, it is easier to construct the measure when $J=\emptyset$, because then we can view $\Lambda^\infty$ as the inverse limit $\varprojlim_{n\in \NN^k}\Lambda^n$ of the finite path spaces (see the proof of \cite[Proposition~8.1]{aHLRS3}). Note that \cite[Proposition~10.2]{aHLRS3} applies to a broader class of graphs.
\end{rmk}

\begin{rmk}\label{periodic?}
Theorem~7.1 of \cite{aHLRS3} says that the KMS$_1$ state is unique if and only if the graph is aperiodic. At least for graphs with one vertex, the ``rationally independent'' hypothesis in Theorem~\ref{main_thm}\eqref{6d} is linked to aperiodicity. If $\Lambda$ has one vertex, $N_1$ blue edges and $N_2$ red edges, and if $\ln N_1/\ln N_2$ is irrational, then $\Lambda$ is aperiodic \cite[Corollary~3.2]{DY}. However, even if $\ln N_1/\ln N_2$ is rational, then $\Lambda$ can be aperiodic. There is a detailed discussion of this question in \cite{DY}, and a necessary and sufficient condition is described in \cite[Theorem~3.1]{DY}. (The proof of this in \cite{DY} is algebraic: when $|\Lambda^0|=1$, the path space $\Lambda$ is a semigroup, and one can study the graph by studying the algebraic properties of this semigroup. There is an alternative graph-based proof in the appendix to \cite{BR2}.)
\end{rmk}

\section{Better results for $2$-graphs}\label{secnogaugeexpectation}

The formula~\eqref{eq-formula-phi} for the KMS state $\phi$ in Theorem~\ref{main_thm} involves the expectation $\Phi^\gamma$ onto the core $\Tt C^*(\Lambda)^\gamma$. It did not appear in the corresponding formula in \cite[Proposition~10.2]{aHLRS3}, so one naturally wonders whether it is necessary here. We have been able to answer this when $|K|=1$: formula \eqref{kmsformula2} implies that $\psi$ is the state $\phi$ of Theorem~\ref{main_thm}. (Of course, this is the only nontrivial possibility for $K$ when $k=2$.)  For $|K|\geq 2$, the state $\psi$ is not necessarily supported on the diagonal $\clsp\{t_\lambda t_\lambda^*\}$, and hence need not be the state in Theorem~\ref{main_thm} (see Remark~\ref{notunique}).

\begin{prop}\label{Ksingleton}
Resume the notation of Theorem~\ref{main_thm}, and suppose in addition that $K=\{i\}$ and that the directed graph $(\Lambda^0, \Lambda^{e_i}, r,s)$ is not a cycle.  Then there is a bounded functional $\psi$ on $\TC^*(\Lambda)$ such that
\begin{equation}\label{eq-formula-phi2}
\psi(a):=\sum_{m\in\NN^J}\int_{\Lambda^{m,\infty_K}} \big(\pi^K(a)\xi_{m,x}\mid \xi_{m,x}\big)\, d\nu^m(x)\quad\text{for $a\geq 0$,}
 \end{equation}
and  $\psi$  is a $\KMS_1$ state of $(\TC^*(\Lambda),\alpha^r)$ satisfying 
\begin{equation}\label{kmsformula2}
 \psi(t_\sigma t^*_\tau)=\delta_{\sigma,\tau}e^{-r\cdot d(\sigma)}\PF_{s(\sigma)}\quad\text{for $\sigma,\tau\in \Lambda$.}
 \end{equation}
The state $\psi$ factors through the quotient by the ideal  generated by $t_v-\sum_{e\in v\Lambda^{e_i}}t_e t_e^*$,
and we have $\psi\big(t_v-\sum_{e\in v\Lambda^{e_j}}t_e t_e^*\big)\ne 0$ for all $j\in J$ and $v\in\Lambda^0$.
\end{prop}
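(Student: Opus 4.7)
The plan is to identify $\psi$ with the KMS$_1$ state $\phi$ of Theorem~\ref{main_thm}\eqref{6c}. Since $\TC^*(\Lambda)=\clsp\{t_\mu t_\nu^*:\mu,\nu\in\Lambda\}$ and the formulas \eqref{kmsformula} and \eqref{kmsformula2} coincide, it suffices to check that $\psi$ is a well-defined state satisfying \eqref{kmsformula2}. Then \cite[Proposition~3.1(b)]{aHLRS2} will deliver the KMS$_1$ condition, the identification $\psi=\phi$ is immediate, and the remaining assertions (factoring through the ideal, non-vanishing on $t_v-\sum_{e\in v\Lambda^{e_j}}t_e t_e^*$ for $j\in J$) transfer from Theorem~\ref{main_thm}\eqref{6c}.

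I would begin by noting that $\psi$ is a state of norm $1$: the proof of Theorem~\ref{main_thm}\eqref{6c} goes through verbatim once $\Phi^\gamma$ is dropped, since the bound $|(\pi^K(a)\xi_{m,x}\mid\xi_{m,x})|\leq\|a\|$ and the identity $\sum_{m\in\NN^J}\nu^m(\Lambda^{m,\infty_K})=1$ are unaffected. To compute $\psi(t_\sigma t_\tau^*)$, a short calculation with the adjoint formulas for $T_\sigma^*$ and $T_\tau^*$ as in the proof of Lemma~\ref{lem:measurability'} gives
\[
\big(\pi^K(t_\sigma t_\tau^*)\xi_{m,x}\mid \xi_{m,x}\big)=\big(\xi_{m-d(\tau)_J,\,x(d(\tau),\infty)}\mid \xi_{m-d(\sigma)_J,\,x(d(\sigma),\infty)}\big),
\]
which equals $1$ exactly when $d(\sigma)_J=d(\tau)_J$, $x\in Z(\sigma)\cap Z(\tau)\cap \Lambda^{m,\infty_K}$, and $x(d(\sigma),\infty)=x(d(\tau),\infty)$, and is $0$ otherwise.

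I would then verify \eqref{kmsformula2} by cases. If $d(\sigma)_J\neq d(\tau)_J$ the integrand is identically $0$. If $d(\sigma)=d(\tau)$, the equality of tails is automatic while $Z(\sigma)\cap Z(\tau)=\emptyset$ unless $\sigma=\tau$; in that case Theorem~\ref{main_thm}\eqref{6b} yields $\psi(t_\sigma t_\sigma^*)=e^{-r\cdot d(\sigma)}\PF_{s(\sigma)}$. The genuinely new case, and the main obstacle, is $d(\sigma)_J=d(\tau)_J$ with $d(\sigma)_i<d(\tau)_i$ (the symmetric case reduces to this by passing to the adjoint): the factorisation property writes $\tau=\sigma\alpha$ with $d(\alpha)=(0,d(\tau)_i-d(\sigma)_i)\in\NN^K$, and the condition $x(d(\sigma),\infty)=x(d(\tau),\infty)$ becomes the requirement that $x(d(\sigma),\infty)$ be strictly periodic in direction $i$ with period $d(\alpha)_i$.

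Such a periodic path $y\in\Lambda^{m-d(\sigma)_J,\infty_K}$ is uniquely determined by its finite initial segment $y(0,(m-d(\sigma)_J,d(\alpha)_i))$, so the support of the integrand is a finite subset of $\Lambda^{m,\infty_K}$. Since the coordinate graph $(\Lambda^0,\Lambda^{e_i},r,s)$ is not a cycle we have $\rho(A_i)>1$, so $r_i=\ln\rho(A_i)>0$, and then for any $x\in\Lambda^{m,\infty_K}$
\[
\nu^m(\{x\})\leq\nu^m\big(Z(x(0,(m,n e_i)))\cap\Lambda^{m,\infty_K}\big)=e^{-r\cdot(m,n e_i)}C_J\PF_{s(x(0,(m,n e_i)))}\longrightarrow 0
\]
as $n\to\infty$, so $\nu^m$ has no atoms and finite sets are $\nu^m$-null. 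Hence the integral vanishes; summing over $m$ gives $\psi(t_\sigma t_\tau^*)=0$, completing the proof of \eqref{kmsformula2} and hence the proposition.
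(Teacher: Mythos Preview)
Your argument is correct, and the overall architecture matches the paper's: reduce everything to establishing \eqref{kmsformula2}, after which the KMS$_1$ property follows from \cite[Proposition~3.1(b)]{aHLRS2} and the remaining assertions are inherited from Theorem~\ref{main_thm}\eqref{6c}. The case analysis is also the same, and both proofs pivot on the observation that an $x$ in the support of the integrand must satisfy $x(d(\sigma),\infty)=\alpha\,x(d(\sigma),\infty)$ for some nontrivial $\alpha\in\Lambda^{0,\NN^K}$.

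Where you diverge is in how you exploit that periodicity. The paper notes that such $x$ lie in $Z(\tau\alpha^N)$ for every $N$, bounds $\nu^m(Z(\tau\alpha^N)\cap\Lambda^{m,\infty_K})$ uniformly in $N$ by a summable function of $m$, invokes dominated convergence to pull the limit $N\to\infty$ through the sum over $m$, and then uses \eqref{eq-41} together with $\rho(A_i)>1$ to see the limit is $0$. You instead observe that a path with $y=\alpha y$ is determined by the finite segment $y(0,(m-d(\sigma)_J,d(\alpha)_i))$, so the support is finite, and then show directly from \eqref{nu_m} and $\rho(A_i)>1$ that each $\nu^m$ is non-atomic. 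This is cleaner: it avoids the dominated convergence step and the explicit summation entirely, and it makes transparent exactly where ``not a cycle'' enters. The paper's approach, on the other hand, keeps everything in terms of cylinder-set measures and so connects more directly with the formulas already established in Theorem~\ref{main_thm}.

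One small point of tidiness: your sentence ``the factorisation property writes $\tau=\sigma\alpha$'' presupposes that $\sigma$ is an initial segment of $\tau$, which you only know once some $x\in Z(\sigma)\cap Z(\tau)$ exists. If $Z(\sigma)\cap Z(\tau)\cap\Lambda^{m,\infty_K}=\emptyset$ for all $m$, the integrand vanishes identically and there is nothing to prove; otherwise your factorisation is valid. It would be worth saying this explicitly.
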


\begin{proof}
The first two paragraphs of the proof of Theorem~\ref{main_thm}\eqref{6c} apply almost verbatim. 
For the other half of \eqref{kmsformula2}, we fix $\tau\neq \sigma \in \Lambda$, and aim to get an estimate for $\psi(t_\sigma t_\tau^*)$. First we have to look at the integrands. For $a$ of the form $t_\sigma t_\tau^*$ they all take only the values $0$ and $1$. We need to fix $m$ and look at the semi-infinite paths $x\in \Lambda^{m,\infty_K}$ such that
\[
\big(\pi^K(t_\sigma t^*_\tau)\xi_{m,x}\mid\xi_{m,x}\big)=\big(\xi_{m-d(\tau)_J,x(d(\tau),\infty)}\mid\xi_{m-d(\sigma)_J,x(d(\sigma),\infty)}\big)=1.
\] 
If $x$ is such a path, then $d(\tau)_J=d(\sigma)_J\leq m$,  and  there is a semi-infinite path $y$ such that $x=\tau y=\sigma y$.  Say $d(\tau)=(d(\tau)_J,n)$ and $d(\sigma)=(d(\tau)_J,p)$ for some $n, p\in\NN^K$. We cannot have $p=n$, because then $d(\tau)=d(\sigma)$ and $\sigma y=\tau y$ imply $\sigma=\tau$. Since $|K|=1$, $\NN^K\cong \NN$ is totally ordered, so we may suppose that $p> n$ (otherwise swap $p$ and $n$).
Then $\sigma y=\tau y$ implies that $\sigma(0,(d(\tau)_J,n))=\tau$. So $\sigma y=\tau \lambda y$ where $\lambda=\sigma ((d(\tau)_J,n),(d(\tau)_J,p))$.
Thus $y=\lambda y$, and $y=\lambda\lambda y$. By induction we have $y=\lambda^Ny$ for all $N\geq 1$, and $x\in Z(\tau \lambda^N)\cap \Lambda^{m,\infty_K}$ for all $N\ge 1$.
Thus 
\[
\big\{x\in\Lambda^{m,\infty_K}:\big(\pi^K (t_\sigma t^*_\tau)\xi_{m,x}\mid\xi_{m,x}\big)=1\big\}\subset \textstyle{\bigcap_{N=1}^{\infty}}Z(\tau\lambda^N) \cap \Lambda^{m,\infty_K},
\] 
and we can estimate
\begin{align}\label{estphi}
\psi(t_\sigma t^*_\tau)&=\sum_{m\in\NN^J}\nu^m\big(\big\{x\in\Lambda^{m,\infty_K}:(\pi^K(t_\sigma t^*_\tau)\xi_{m,x}\mid\xi_{m,x})=1\big\}\big)\\
&\le \sum_{m\geq d(\tau)_J}\nu^m\big(\textstyle{\bigcap_{N=1}^{\infty}}Z(\tau\lambda^N)\cap \Lambda^{m,\infty_K}\big)\notag\\
&=\sum_{m\geq d(\tau)_J}\lim_{N\to\infty}\nu^m(Z(\tau\lambda^N)\cap \Lambda^{m,\infty_K}).\notag
\end{align}
Of course, we want to pull the limit through the infinite sum. So we note that
\[
Z(\tau\lambda^N)\cap \Lambda^{m,\infty_K}=\bigcup_{\mu\in s(\lambda)\Lambda^{m-d(\tau)_J,0}}Z(\tau\lambda^N\mu)\cap\Lambda^{m,\infty_K}.
\]
Then using \eqref{nu_m} we get
\begin{align*}
\nu^l(Z(\tau\lambda^N)&\cap \Lambda^{m,\infty_K})
=\sum_{\mu\in s(\lambda)\Lambda^{m-d(\tau)_J,0}}\nu^m(Z(\tau\lambda^N\mu)\cap\Lambda^{m,\infty_K})\\
&=\sum_{\mu\in s(\lambda)\Lambda^{m-d(\tau)_J,0}}e^{-r\cdot d(\tau\lambda^N\mu)}C_J\kappa_{s(\mu)}\\
&=e^{-r\cdot (m,n+N(p-n))}C_J\sum_{w\in \Lambda^0}A^{(m-d(\tau)_J,0)}(s(\lambda),w)\kappa_w\\
&=e^{-r\cdot (m,n+N(p-n))}C_J\big(A^{(m-d(\tau)_J,0)}\kappa\big)_{s(\lambda)}\\
&=e^{-r\cdot (m,n+N(p-n))}C_J\rho(\Lambda)^{(m-d(\tau)_J,0)}\kappa_{s(\lambda)}\\
&=e^{-(r-\ln\rho(\Lambda))\cdot(m,0)}\rho(\Lambda)^{-(d(\tau)_J,n+N(p-n))}C_J\kappa_{s(\lambda)}\\
&\hspace{6cm}\text{since $e^{r_i}=\rho(A_i)$ for $i\in K$}\\
&\leq e^{-(r-\ln\rho(\Lambda))\cdot (m,0)}C_J\kappa_{s(\lambda)}\quad\text{since $\rho(A_i)\geq 1$.}
\end{align*}
Since $r_j>\ln\rho(A_j)$ for $j\in J$, the series $\sum_{m\in \NN^J}e^{-(r-\ln\rho(\Lambda))\cdot (m,0)}$ converges. Thus the dominated convergence theorem and \eqref{estphi} imply that
\begin{equation}\label{estphifinal}
\psi(t_\sigma t^*_\tau)\leq\lim_{N\to\infty}\sum_{m\in\NN^J}\nu^m(Z(\tau\lambda^N)\cap \Lambda^{m,\infty_K}).
\end{equation}
Equation \eqref{eq-41} implies that the sum on the right-hand side of \eqref{estphifinal} is \[
\sum_{m\in\NN^J}\nu^m(Z(\tau\lambda^N)\cap \Lambda^{m,\infty_K})=e^{-r\cdot(d(\tau)_J,n+N(p-n))}\kappa_{s(\lambda)},
\]
and this goes to $0$ as $N\to \infty$ because $p-n>0$ and (for the $i$ such that $K=\{i\}$) $r_i=\rho(A_i)>1$ because $(\Lambda^0,\Lambda^{e_i},r,s)$ is not a cycle. Thus \eqref{estphifinal} implies that $\psi(t_\sigma t_\tau^*)=0$, and this completes the proof of \eqref{kmsformula2}.

Now the last two paragraphs in the proof of Theorem~\ref{main_thm}\eqref{6c} carry over to this situation.
\end{proof}

\begin{rmk}\label{notunique} The formula \eqref{eq-formula-phi2} defines a state $\psi$ on $\Tt C^*(\Lambda)$ for every $K$ with $|K|\geq 1$. The formula \eqref{kmsformula2} says that when $|K|=1$, this state is supported on the diagonal $D:=\clsp\{T_\sigma T_\sigma^*:\sigma \in \Lambda\}$. We claim that this is not necessarily the case if $|K|\geq 2$. To see this, suppose that $|K|\geq 2$ and that the graph $\Lambda^K:=(\Lambda^0,d^{-1}(\NN^K),r,s)$ is periodic. We write $\{t^K_\lambda\}$ for the universal Toeplitz-Cuntz-Krieger family in $\Tt C^*(\Lambda^K)$. Then \[\{T^K:=t_\mu:\mu\in \Lambda^K\}\] is a Toeplitz-Cuntz-Krieger   $\Lambda^K$-family in $\Tt C^*(\Lambda)$, and hence gives  a homomorphism $\pi_{T^K}:\Tt C^*(\Lambda^K)\to\Tt C^*(\Lambda)$ such that $\pi_{T^K}(t^K_\lambda)=t_\lambda$. We are going to use the results of \cite{aHLRS3} to compute the values of the KMS state $\psi$  on elements of the form $\pi_{T^K}(t^K_\lambda (t^K_\mu)^*)$ for $\mu,\nu\in \Lambda^K$.

So suppose $\lambda,\mu\in \Lambda^K$ (so that $d(\lambda)_J=d(\mu)_J=0$). By \eqref{eq-formula-phi2}, our state satisfies
\begin{equation}\label{getphi}
\psi\circ\pi_{T^K}(t^K_\lambda (t^K_\mu)^*)=\sum_{m\in \N^J}\int_{\Lambda^{m,\infty_K}} \big( T_\lambda T_\mu^*\xi_{m,x}\,|\,\xi_{m,x}\big)\,d\nu^m(x).
\end{equation}
For each $x\in \Lambda^{m,\infty_K}$, the vector $T_\lambda T_\mu^*\xi_{m,x}$ is either a basis element or $0$. Thus the integrands on the right-hand side of \eqref{getphi} are all non-negative. Thus we have
\[
\psi\circ\pi_{T^K}(t^K_\lambda (t^K_\mu)^*)\geq\int_{\Lambda^{0,\infty_K}} \big( T_\lambda T_\mu^*\xi_{0,x}\,|\,\xi_{0,x}\big)\,d\nu^0(x)\geq 0.
\]
Recall that $C_J:=\prod_{j\in J}(1-e^{r_j}\rho(A_j))$. Then, because $d(\lambda)_J=0$, the formula \eqref{nu_m} shows that the measure $\nu^0$ satisfies
\begin{equation}\label{ourphi}
\nu^0(Z(\lambda)\cap\Lambda^{0,\infty})=e^{-r\cdot d(\lambda)}C_J\kappa_{s(\lambda)}=\rho(\Lambda^K)^{-d(\lambda)_K}C_J\kappa_{s(\lambda)}.
\end{equation}
  
Now we compare our formula for $\psi$ with that of the state in \cite[Proposition~10.2]{aHLRS3} for the preferred dynamics on the graph $\Lambda^K$. Since $\Lambda^K$ has vertex matrices $\{A_i:i\in K\}$, $\Lambda$ and $\Lambda^K$ have the same unimodular Perron-Frobenius eigenvector. The model graph $\Omega_{|K|}$ for infinite paths in $\Lambda^K$ sits inside the model graph $\Omega_{k,(0,\infty_K)}$ for paths in $\Lambda^{0,\infty_K}$, and the map $x\mapsto x|_{\Omega_{|K|}}$ is a homeomorphism $h$ of $\Lambda^{0,\infty_K}$ onto $(\Lambda^K)^\infty$. The homeomorphism $h$ carries the set $Z(\lambda)\cap \Lambda^{0,\infty_K}$ into the cylinder set $Z(\lambda)\subset (\Lambda^K)^\infty$. Comparing \eqref{ourphi} with the formula (8.3) for $M$ in \cite{aHLRS3} shows that $\nu^0$ is the measure $C_Jh_*M$ pulled over from $(\Lambda^K)^{\infty}$. The underlying bijection of  $(\Lambda^K)^{\infty}$ onto $\Lambda^{0,\infty_K}$ gives a unitary isomorphism $V$ of $\ell^2((\Lambda^K)^\infty)$ onto the summand $H_0:=\ell^2(\Lambda^{0,\infty_K})$ of $\ell^2(\partial^K\!(\Lambda))=\bigoplus_{m\in \NN^J}\ell^2(\Lambda^{m,\infty_K})$. This isomorphism $V$ maps the usual basis $\{h_x\}$ for $\ell^2((\Lambda^K)^\infty)$ into the basis $\{\xi_{0,x}\}$, and intertwines the usual infinite-path representation (denoted $\pi_S$ in \cite[\S10]{aHLRS3}) and $\pi_{T^K}|_{H_0}$. Thus
\begin{align}\label{periodiccase}
\psi(t_\lambda t_\mu^*)=\psi\circ\pi_{T^K}(t^K_\lambda (t^K_\mu)^*)
&\geq\int_{\Lambda^{0,\infty_K}} \big( T_\lambda T_\mu^*\xi_{0,x}\,|\,\xi_{0,x}\big)\,d\nu^0(x)\\
&=C_J\int_{(\Lambda^K)^{\infty}} \big(\pi_S(t_\lambda t_\mu^*)h_x\,|\,h_x\big)\,dM(x),\notag
\end{align}
which is, modulo the nonzero scalar $C_J$, the formula for the KMS$_1$ state of $(\Tt C^*(\Lambda^K),\alpha^r)$ in \cite[Proposition~10.2]{aHLRS3}, which we will denote here by $\psi^K$.

One of the main points made in \cite[\S10]{aHLRS3} was that, when $d(\lambda)-d(\mu)$ belongs to the periodicity group $\Per(\Lambda^K)$, the state $\psi^K$ does not vanish on $t^K_\lambda (t^K_\mu)^*$. Hence our estimate \eqref{periodiccase} shows that $\psi(t_\lambda t_\mu^*)$ does not vanish either.  This settles the  claim we made above. 
\end{rmk}

\section{$2$-graphs with a single vertex}

Here we illustrate our results by applying them to a $2$-graph with a single vertex. Such graphs were first studied by Kribs and Power \cite{KP2}, and their $C^*$-algebras have been extensively studied by Davidson and Yang \cite{DY, Yang1, Yang2}. Yang in particular has made a convincing case that these $C^*$-algebras should be viewed as higher-rank anaologues of the Cuntz algebras, and share many of their properties.

We suppose that $\Lambda$ is a $2$-graph with one vertex, $N_1>0$ blue edges and $N_2>0$ red edges.  Such graphs are always coordinatewise irreducible, and their spectral radii are $\ln N_1$ and $\ln N_2$. Our conventions at \eqref{choice} say that the dynamics is given by a vector $r\in (0,\infty)^2$ such that $r_i^{-1}\ln\rho(A_i)=1$ for one $i\in \{1,2\}$ and $r_j^{-1}\ln\rho(A_j)\geq 1$ for $j\not =i$. We may as well suppose that $r_2^{-1}\ln\rho(A_2)=1$ and $r_1^{-1}\ln\rho(A_1)\geq 1$ (otherwise swap the colours).

We first consider inverse temperatures $\beta$ satisfying $\beta>1$. Then we have $\beta r_i >\ln N_i$ for both $i$, and \cite[Theorem~6.1]{aHLRS2} implies that there is a single KMS$_\beta$ state. By \cite[(6.2)]{aHLRS2}, the vector $y$ is the real number given by
\begin{align*}
y_v&=\sum_{n=0}^\infty  e^{-\beta (r_1,\ln N_2)\cdot n}|\Lambda^n|\\
&=\sum_{n=0}^\infty e^{-\beta r_1 n_1}N_2^{-\beta n_2}N_1^{n_1}N_2^{n_2}\\
&=\Big(\sum_{n_1=0}^\infty(N_1 e^{-\beta r_1})^{n_1}\Big)\Big(\sum^\infty_{n_2=0}(N_2^{1-\beta})^{n_2}\Big)\\
&=\big(1-N_1e^{-\beta r_1}\big)^{-1}\big(1-N_2^{1-\beta}\big)^{-1}.
\end{align*}
So $\Sigma_\beta=\{y_v^{-1}\}$ and $m_v=(1-e^{-\beta r_1}N_1)^{-1}(1-N_2^{1-\beta})^{-1}y_v^{-1}$, which is $1$. (And which is a nice reality check, since $m_v$ is supposed to be $\phi_\epsilon(p_v)$, which is $1$, because $p_v$ is the identity of $\Tt C^*(\Lambda)$.) The single KMS$_\beta$ state is given by
\[
\phi_\epsilon(t_\mu t_\nu^*)=\delta_{\mu,\nu}e^{-r_1\beta d(\mu)_1}N_2^{-\beta d(\mu)_2}.
\]
The measure $\nu_\beta$ on $\Lambda$ implementing this (see Remark~\ref{atomicmeas}) satisfies 
\[
\nu_\beta(\{\lambda\})=e^{-r_1\beta d(\lambda)_1}N_2^{-\beta d(\lambda)_2}\big(1-N_1e^{-\beta r_1}\big)\big(1-N_2^{1-\beta}\big).
\]

Next we suppose that $r_1^{-1}\ln N_1=r_2^{-1}\ln N_2=1$ and consider $\beta=1$. Then $K=\{1,2\}$ and $\alpha^r$ is the preferred dynamics studied in \cite{aHLRS2}. The common unimodular Perron-Frobenius eigenvector of the vertex matrices is $\kappa=1$, and hence the argument in the first paragraph of the proof of \cite[Theorem~7.2]{aHLRS2} gives a KMS$_1$ state $\phi$ of $(\Tt C^*(\Lambda),\alpha^r)$ such that
\begin{equation*}
\phi(t_\mu t_\nu^*)=\delta_{\mu,\nu}N_1^{-d(\mu)_1}N_2^{-d(\mu)_2}
\end{equation*}
(as observed in Remark~7.3 of \cite{aHLRS2}). The measure $M$ implementing this state in \cite[Proposition~10.2]{aHLRS3} satisfies 
\[
M(Z(\lambda))=N_1^{-d(\lambda)_1}N_2^{-d(\lambda)_2}
\]
(see \cite[Proposition~8.1]{aHLRS3}). We do not need rational independence of $r_1=\ln N_1$ and $r_2\ln N_2$ to get existence of the KMS$_1$ state $\phi$ (as observed in Remark~7.3 of \cite{aHLRS2}), but we do need it to deduce from  \cite[Theorem~7.2]{aHLRS2} that this is the only KMS$_1$ state of $(\Tt C^*(\Lambda),\alpha^r)$, and that it factors through a state of $C^*(\Lambda)$. However, we now know from Theorem~7.1 of \cite{aHLRS3} that there is a unique KMS$_1$ state if and only if $\Lambda$ is aperiodic, and rational dependence is only a sufficient condition for that  (see Remark~\ref{periodic?}).

Finally, suppose that $r_1^{-1}\ln N_1<r_2^{-1}\ln N_2=1$ and $\beta=1$. Since $|K|=\{2\}$ has a single element, Theorem~\ref{main_thm} gives us a KMS$_1$ state $\phi$ of $(C^*(\Lambda),\alpha^r)$ satisfying 
\[
\phi(t_\mu t_\nu^*)=\delta_{\mu,\nu}e^{-r_1d(\mu)_1}N_2^{-d(\mu)_2}.
\]
Since $\Lambda^0=\{v\}$, Theorem~\ref{main_thm}\eqref{6c} implies that $\phi$ factors through the quotient by the ideal generated by the single element $t_v-\sum_{e\in \Lambda^{e_2}}t_et_e^*$. The measure $\mu$ giving the spatial realisation of $\phi$ in Corollary~\ref{cor:measure} is supported on the semi-infinite path space $\partial^{\{2\}}(\Lambda)$, and there is given by 
\[
\mu(Z(\lambda)\cap \partial^{\{2\}}\Lambda)=e^{-r_1d(\mu)_1}N_2^{-d(\mu)_2}.
\]
If $r_1^{-1}\ln N_2$ is irrational, then Theorem~\ref{main_thm}\eqref{6d} implies that $\phi$ is the only KMS$_1$ state. If $r_1^{-1}\ln N_2$ is rational, then we have no information. It seems unlikely that the uniqueness is still connected with the periodicity of the graph $\Lambda$: periodicity requires that $\ln N_2/\ln N_1$ is rational, which seems quite unrelated to rationality of $r_1^{-1}\ln N_2$.

\appendix

\section{Relative graph algebras}\label{App:RCK}

When $E$ is a row-finite graph and $V$ is a subset of $E^0$ which contains no sources, the \emph{relative graph algebra} $C^*(E,V)$ is the quotient of $\TC^*(E)$ by the ideal generated by
\[
\Big\{t_v-\sum_{e\in vE^1}t_e t_e^*:v\in V\Big\}.
\]
These were introduced by Muhly and Tomforde \cite{MT} as a tractable family of relative Cuntz-Pimsner algebras: indeed, each $C^*(E,V)$ is isomorphic to the graph algebra of a graph obtained by adding an extra vertex $v'$ for each $v\in E^0\backslash V$ and an edge $e'$ from $v'$ to $v$ for each $e\in vE^1$ \cite[Theorem~3.7]{MT}.

The algebra in Proposition~\ref{TCK} looks like a relative graph algebra, but it doesn't at first sight look much like the relative graph algebras of Sims \cite{S1}. This is mostly because he was interested primarily in extending theory to cover the finitely aligned higher-rank graphs of \cite{RSY}, and adjusting constructions and definitions to accommodate them is a complicated business. For the graphs of interest to us, things simplify, and it is quite easy to see that our algebra is indeed a relative graph algebra in his sense (Proposition~\ref{idrelCK}). It is trickier to see that our semi-infinite path representation is one of his boundary-path representations (Proposition~\ref{aidanrep}), but the calculation might provide an instructive example for anyone interested in the satiation process of \cite{S1}.

We discuss a row-finite $k$-graph $\Lambda$ with no sources. We add the notation
\[
\MCE(\lambda,\mu)=\{\sigma\in \Lambda: d(\sigma)=d(\lambda)\vee d(\mu)\text{ and $\sigma$ has the form }\sigma=\lambda\alpha=\mu\beta\};
\]
thus $(\alpha,\beta)\in \Lambda^{\min}(\lambda,\mu)$ if and only if $(\lambda\alpha,\mu\beta)\in \MCE(\lambda,\mu)$. A subset $E$ of $\Lambda$ is \emph{finite exhaustive} if it is finite, if there is a vertex $v$ (denoted $r(E)$) such that $E\subset v\Lambda$, and if for every $\mu\in v\Lambda$, there exists $\lambda\in E$ such that $\MCE(\mu,\lambda)$ is nonempty. Notice that any finite subset of $v\Lambda$ containing a finite exhaustive set $F$ is itself finite exhaustive, and that any set of the form $\{\lambda(0, n_\lambda):\lambda\in F, n_\lambda \le d(\lambda)\}$ is finite exhaustive. For the graphs we are considering, each $v\Lambda^{e_i}$ is finite exhaustive.

Sims' constructions use collections $\mathcal{F}$ of finite exhaustive sets. For each such $\mathcal{F}$, the \emph{relative Cuntz-Krieger algebra} $C^*(\Lambda;\mathcal{F})$ is the quotient of $\TC^*(\Lambda)$ by the ideal generated by
\begin{equation}\label{defrel}
\Big\{\prod_{\lambda\in E}(q_{r(E)}-t_\lambda t_\lambda^*):E\in \F\Big\}.
\end{equation}
When we take the quotient of $\TC^*(\Lambda)$ by the relations \eqref{defrel}, we impose other relations of the same form. The main point made in \cite{S1} is that these relations are those corresponding to finite exhaustive sets in the ``satiation'' of $\F$, which we helpfully define as the smallest satiated set (of finite exhaustive sets) that contains $\F$. The concept of satiated set is described in Definition~4.1 of \cite{S1}; that the satiation has the required properties is the content of \cite[Corollary~5.6]{S1} and the uniqueness theorems in \cite[\S6]{S1}.

\begin{rmk}\label{aidanvworld}
The definition of a Toeplitz-Cuntz-Krieger family in \cite{S1} is sufficiently different from the one in \cite{RS1,aHLRS2} (and ours in \S\ref{TCK-defn}) that one might want to be reassured that these papers are all about the same algebras. The projections $T_v$ in \cite{S1} are denoted by $Q_v$ in \cite{RS1,aHLRS2}, and then the relations (T1), (T2) and (T5) in \S\ref{TCK-defn} are the same as (TCK1), (TCK2) and (TCK3) in \cite[\S3]{S1}. It may appear that (T3) and (T4) are additional relations, but this is not so. The relation (T3) is subsumed in (TCK3) by interpreting $\Lambda^{\min}(\lambda,\lambda)$ as $\{s(\lambda)\}$. Similarly, by interpreting empty sums in (TCK3) as $0$, we find that $T_\lambda^*T_\mu=\delta_{\lambda,\mu}T_{s(\mu)}$, and hence $\{T_\lambda T_\lambda^*:d(\lambda)=n\}$ is a set of mutually orthogonal projections. From (TCK2) we deduce that if $r(\lambda)=v$ then
\[
T_v(T_\lambda T_\lambda^*)=T_{v\lambda}T_\lambda^*=T_\lambda T_\lambda^*=T_\lambda T_{v\lambda^*}=(T_\lambda T_\lambda^*)T_v,
\]
and hence $T_v\geq T_\lambda T_\lambda^*$ (by \cite[Proposition~A.1]{R}, for example).
Since $\MCE(\lambda,\mu)=\MCE(\mu,\lambda)$, the relation (TCK3) also implies that for any $\lambda$ and $\mu$ in $\Lambda$ we have
\[
(T_\lambda T_\lambda^*)(T_\mu T_\mu^*)=T_\lambda\Big(\sum_{(\alpha,\beta)\in \Lambda^{\min}(\lambda,\mu)}T_\alpha T_\beta^*\Big)T_\mu=\sum_{\sigma\in \MCE(\lambda,\mu)}T_\sigma T_\sigma^*=(T_\mu T_\mu^*)(T_\lambda T_\lambda^*).
\]
Now the orthogonality of the $T_\lambda T_\lambda^*$ implies that
\[
T_v\geq\sum_{\lambda\in v\Lambda^n}T_\lambda T_\lambda^*,
\]
which is (T4).
\end{rmk}

\begin{prop}\label{idrelCK}
Let $\Lambda$ be a row-finite $k$-graph with no sources. Suppose that $J\sqcup K$ is a partition of $\{1, \dots,k \}$ and
\begin{equation}\label{keysat}
\E=\big\{v\Lambda^{e_i}:v\in \Lambda^0\text{ and }i\in K\big\}.
\end{equation}
Then $C^*(\Lambda;\E)$ is the quotient of $\TC^*(\Lambda)$ by the ideal generated by
\[
\Big\{t_v-\sum_{e\in v\Lambda^{e_i}}t_e t_e^*:v\in \Lambda^0\text{ and }i\in K\Big\}.
\]
\end{prop}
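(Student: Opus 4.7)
The plan is to reduce the statement to an elementary identity about pairwise orthogonal subprojections of a projection. First, I would check that $\E$ indeed consists of finite exhaustive sets: each $v\Lambda^{e_i}$ is finite by row-finiteness, and exhaustivity follows because, given $\mu\in v\Lambda$, the no-sources hypothesis lets us extend $\mu$ to some $\mu\mu'\in v\Lambda^{d(\mu)\vee e_i}$; factoring this extension so that its unique degree-$e_i$ initial segment $\lambda\in v\Lambda^{e_i}$ is exposed places $\mu\mu'$ in $\MCE(\mu,\lambda)$. Hence $C^*(\Lambda;\E)$ is defined as the quotient of $\TC^*(\Lambda)$ by the ideal generated by the products $\prod_{e\in v\Lambda^{e_i}}(t_v - t_e t_e^*)$ for $v\in\Lambda^0$ and $i\in K$.

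The crux is then to observe that $\{t_e t_e^* : e\in v\Lambda^{e_i}\}$ is a family of pairwise orthogonal subprojections of $t_v$. Both facts are already recorded in Remark~\ref{aidanvworld}: the relation (TCK3) forces $t_e^* t_f = \delta_{e,f} t_{s(f)}$ for edges $e,f\in v\Lambda^{e_i}$, which yields $(t_e t_e^*)(t_f t_f^*) = 0$ when $e\neq f$; and (T2) combined with $t_v t_e = t_e$ gives $t_v(t_e t_e^*) = t_e t_e^* = (t_e t_e^*) t_v$, so $t_e t_e^*\leq t_v$.

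With this in hand, a short induction on $|v\Lambda^{e_i}|$ yields the key identity
\begin{equation*}
\prod_{e\in v\Lambda^{e_i}}(t_v - t_e t_e^*) \;=\; t_v - \sum_{e\in v\Lambda^{e_i}} t_e t_e^*,
\end{equation*}
the two-factor base case being the direct computation $(t_v - p)(t_v - q) = t_v - p - q$ whenever $p$ and $q$ are orthogonal subprojections of $t_v$. Thus the generators of the ideal defining $C^*(\Lambda;\E)$ are \emph{literally} the elements $t_v - \sum_{e\in v\Lambda^{e_i}} t_e t_e^*$ appearing in the statement, so the two ideals (and their quotients) coincide. There is no substantive obstacle here: once the orthogonality observation is made, the identity and the conclusion are immediate.
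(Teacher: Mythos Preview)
Your proposal is correct and follows essentially the same route as the paper's own proof: both reduce the claim to the identity $\prod_{e\in v\Lambda^{e_i}}(t_v - t_e t_e^*) = t_v - \sum_{e\in v\Lambda^{e_i}} t_e t_e^*$, which holds because the $t_e t_e^*$ are pairwise orthogonal subprojections of $t_v$ (the paper cites (T4) or Remark~\ref{aidanvworld} for this, just as you do). Your write-up adds a bit more detail --- the explicit check that each $v\Lambda^{e_i}$ is finite exhaustive, and the inductive justification of the product-to-sum identity --- but these are elaborations rather than a different argument.
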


\begin{proof}
Since $t_v\geq t_e t_e^*$ for $e\in v\Lambda^{e_i}$ and $(t_e t_e^*)(t_f t_f^*)=0$ for $e\not=f$ in $v\Lambda^{e_i}$ (by (T4) or by Remark~\ref{aidanvworld} if you prefer the definitions in \cite{S1}), we have
\[
\prod_{e\in v\Lambda^{e_i}}(t_{v}-t_e t_e^*)
=t_v-\sum_{e\in v\Lambda^{e_i}}t_e t_e^*,
\]
and the result follows.
\end{proof}

In \cite[\S4]{S1}, Sims constructs a family of representations of his relative graph algebras, and this construction requires that we work with satiated collections of finite exhaustive sets. So to apply his construction to the algebra of Proposition~\ref{idrelCK} we need to identify the satiation  of the set $\E$ in \eqref{keysat}.

\begin{prop}
Let $\Lambda$ be a row-finite $k$-graph with no sources. Let $J\sqcup K$ be a partition of $\{1,\dots, k\}$ and $\E$ as in \eqref{keysat}.
Then the satiation $\overline{\E}$ of $\E$ consists of the sets which contain a finite exhaustive subset lying in $d^{-1}(\N^K)$.
\end{prop}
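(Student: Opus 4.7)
The plan is to prove $\overline{\E}=\F$, where I write $\F$ for the collection of finite exhaustive subsets of $\Lambda$ that contain a finite exhaustive subset lying in $d^{-1}(\N^K)$. I would establish this by the two separate inclusions, after observing that $\F$ is the natural candidate.

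For the inclusion $\overline{\E}\subseteq \F$, I would show that $\F$ itself is satiated and contains $\E$. The containment $\E\subseteq \F$ is immediate, since for $i\in K$ each $v\Lambda^{e_i}$ is a finite exhaustive set whose elements all have degree $e_i\in\N^K$. To see that $\F$ is satiated, I would run through each closure operation in \cite[Definition~4.1]{S1} and check that it preserves the property of containing a finite exhaustive subset in $d^{-1}(\N^K)$. The superset axiom is immediate. For the operations that build new exhaustive sets from old ones by taking initial segments or by concatenation with exhaustive sets rooted at sources of members, one exploits the fact that $d^{-1}(\N^K)$ is closed under prefixes (by the factorisation property) and under composition, so a finite exhaustive subset inside $d^{-1}(\N^K)$ transforms into one of the same kind.

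For the inclusion $\F\subseteq \overline{\E}$, by the superset closure built into $\overline{\E}$ it suffices to show that every finite exhaustive $E_0\subseteq d^{-1}(\N^K)$ lies in $\overline{\E}$. I would proceed in two stages. First, by iterating composition closure starting from the generators $v\Lambda^{e_i}\in \E$, show that $v\Lambda^n\in\overline{\E}$ for every $v\in\Lambda^0$ and every $n\in\N^K$; this is an induction on $|n|$ based on the identity $v\Lambda^{e_i+p}=\bigcup_{e\in v\Lambda^{e_i}} e\cdot s(e)\Lambda^p$ for $p\in\N^K$. Second, given a finite exhaustive $E_0\subseteq d^{-1}(\N^K)$ with common range $v$, set $n:=\bigvee_{\lambda\in E_0}d(\lambda)\in\N^K$; for any $\mu\in v\Lambda^n$, exhaustiveness of $E_0$ supplies some $\lambda\in E_0$ with $\MCE(\mu,\lambda)\ne\emptyset$, and since $d(\lambda)\le n=d(\mu)$ the factorisation property forces $\mu\in \lambda\Lambda$. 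Hence $v\Lambda^n=\bigsqcup_{\lambda\in E_0}\lambda\cdot s(\lambda)\Lambda^{n-d(\lambda)}$, which is precisely the configuration in which the satiation's ``contraction'' operation, applied using the already-present sets $s(\lambda)\Lambda^{n-d(\lambda)}\in\overline{\E}$, produces $E_0$ from $v\Lambda^n\in\overline{\E}$.

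The main obstacle will be matching the informal language above with the precise list of operations in \cite[Definition~4.1]{S1}: each of the ``composition'' and ``contraction'' steps must be shown to be a genuine instance of one of the listed closure operations, with all minimal-common-extension side-conditions verified. With that bookkeeping in hand, both inclusions follow from the outline above.
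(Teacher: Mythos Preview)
Your plan for the inclusion $\overline{\E}\subseteq\F$ matches the paper's: show $\F$ is satiated and contains $\E$. One caution: the closure operation you did not name explicitly, axiom (S2) (pass from $E$ to $\Ext(\mu,E)$), is the one requiring the most care. You must check that when $E$ contains a finite exhaustive $F\subset d^{-1}(\N^K)$, the elements of $\Ext(\mu,F)$ again have degree in $\N^K$; this is the computation the paper does, and it is not covered by your ``closed under prefixes and composition'' remark.

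For $\F\subseteq\overline{\E}$ your route genuinely differs from the paper's. The paper works directly with an arbitrary $G\in\F$: it starts from $r(G)\Lambda^{e_i}\in\E$, uses (S2) and (S4) to obtain $\bigcup_{e}e\,\Ext(e,G)$, and then a single application of (S3) recovers $G$. Your two-stage approach (first build $v\Lambda^n$ inductively via (S4), then reduce to $E_0$) is also viable and perhaps more transparent, but your description of the second stage is off. Writing $v\Lambda^n=\bigcup_{\lambda\in E_0}\lambda\,s(\lambda)\Lambda^{n-d(\lambda)}$ and then invoking the sets $s(\lambda)\Lambda^{n-d(\lambda)}\in\overline{\E}$ suggests an ``inverse (S4)'', which is not among the satiation axioms: (S4) goes from $E_0$ to $\bigcup\lambda G_\lambda$, not back. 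What actually works is (S3): for each $\mu\in v\Lambda^n$ choose $\lambda_\mu\in E_0$ with $\mu(0,d(\lambda_\mu))=\lambda_\mu$, and apply (S3) with $n_\mu=d(\lambda_\mu)$ to land in a subset of $E_0$, then use (S1). Note two technicalities: the union you wrote need not be disjoint, and (S3) requires $n_\mu\neq 0$, so the degenerate case $v\in E_0$ (in particular $E_0=\{v\}$, which does lie in $\F$) must be handled separately. The paper's one-step argument avoids the intermediate induction but is terser; your approach makes the structure of $\overline{\E}$ more explicit at the cost of these extra bookkeeping items.
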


\begin{proof}
We denote by $\F$ the collection of finite subsets $G$ of $\Lambda$ such that $G\subset v\Lambda$ for some $v\in \Lambda^0$, and such that $G$ contains a finite exhaustive subset of $d^{-1}(\N^K)$. Then we need to prove that $\F=\overline{\E}$. First we show that $\F$ is satiated by verifying the axioms (S1--4) of \cite[Definition~4.1]{S1}.

Axiom (S1) is clear. For (S2), we take $G\in \F$, $\mu\in r(G)\Lambda\setminus G\Lambda$, and write $d(\mu)=(m,n)\in \N^J \times \N^K$. Choose a finite exhaustive subset $F$ of $G$ lying in $d^{-1}(\N^K)$; by (S1) it suffices  to show that
\[
\Ext(\mu, F):=\big\{\alpha\in r(F)\Lambda:\text{there are $\lambda\in F$, $\beta\in \Lambda$ such that $(\alpha,\beta)\in \Lambda^{\min}(\mu,\lambda)$}\big\}
\]
is in $\F$, because $\Ext(\mu, G)$ is larger. Suppose $\lambda\in F$ and $(\alpha,\beta) \in \Lambda^{\min}(\mu,\lambda)$. Then $d(\mu)\vee d(\lambda)=\big(m,\max(n,d(\lambda))\big)$. If $n\geq d(\lambda)$, then $\alpha=s(\mu)$ trivally has degree in $\N^K$; if $n\leq d(\lambda)$, then $d(\alpha)=(0,n-d(\lambda))\in \N^K$. Since $\Ext(\mu, F)$ is finite exhaustive \cite[Lemma~2.3]{S1}, we have $\Ext(\mu, F)\in \F$. Thus $\F$ has property (S2).

For (S3), it again suffices to consider $F\in \F\cap d^{-1}(\N^K)$. Then any set
\[
\big\{\lambda(0,n_\lambda):\lambda\in F,\; n_\lambda\not=0,\;n_\lambda\leq d(\lambda)\big\}
\]
of initial segments is finite exhaustive and lies in $\F$, as required. Next we check (S4), which involves choosing a subset $G'$ of $G$ and replacing each $\lambda\in G'$ with a set of the form $\lambda G_\lambda$ where $G_\lambda\subset s(\lambda)\Lambda$ is finite exhaustive. By (S1) it suffices to work with $G$ and all $G_\lambda$ contained in $d^{-1}(\N^K)$. But then the new set $(G\backslash G')\cup\big(\bigcup_{\lambda\in G'}\lambda G_\lambda\big)$ is in $d^{-1}(\N^K)$. We have now shown that $\F$ is satiated.

For $i\in K$, the sets $v\Lambda^{e_i}$ are finite exhaustive and lie in $\F$, so $\overline{\E}\subset \F$. To see that $\overline{\E}=\F$, we take a satiated set $\G$ containing $\E$, and show that $\F\subset \G$. Take $G\in \F$. Then for each $i\in K$, the set $r(G)\Lambda^{e_i}$ belongs to $\G$. Thus by (S2) and (S4), so does the set $\bigcup_{e\in r(G)\Lambda^{e_i}}e\Ext(e,G)$. For each $\mu\in \Ext(e,G)$ either $e\mu$ is in $G$ or $e\mu$ has the form $\lambda f$ for some $\lambda \in G$ with $d(\lambda)\wedge e_i=0$ and $f\in s(\lambda)\Lambda^{e_i}$. Now removing all such $f$ from $\lambda f\in r(G)\Lambda^{e_i}$ gives us $G$ back, and by (S3) gives us another element of the satiated set $\G$. Thus $G\in \G$.
\end{proof}

Now Corollary~5.6 of \cite{S1} implies that $C^*(\Lambda;\E)=C^*(\Lambda;\overline{\E})=C^*(\Lambda;\F)$. Thus $C^*(\Lambda;\E)$ has an $\F$-compatible boundary-path representation as in \cite[Lemma~4.6]{S1}.

\begin{prop}\label{aidanrep}
The set $\partial(\Lambda;\F)$ of $\F$-compatible boundary paths is the same as the set $\partial^K\!\Lambda$, and the semi-infinite path representation is the $\F$-compatible boundary-path representation of Sims in \cite{S1}.
\end{prop}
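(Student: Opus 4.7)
The plan is to prove the set equality $\partial(\Lambda;\F)=\partial^K\!\Lambda$ first, then check that the two representations coincide under this identification. Throughout I use the unpacked form of Sims' $\F$-compatibility: a path $x\in W_\Lambda$ lies in $\partial(\Lambda;\F)$ iff for every $p\leq d(x)$ and every $F\in\F$ with $r(F)=x(p)$ there is some $\lambda\in F$ such that $p+d(\lambda)\leq d(x)$ and $x(p,p+d(\lambda))=\lambda$.

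For the inclusion $\partial^K\!\Lambda\subset\partial(\Lambda;\F)$, I would take $x\in\Lambda^{m,\infty_K}$, $p\leq d(x)$ and $F\in\F$ with $r(F)=x(p)$. By the characterisation of $\F$ established in the previous proposition, $F$ contains a finite exhaustive subset $F'\subset d^{-1}(\N^K)$. Since $F'$ is finite one can choose $n\in\N^K$ with $d(\lambda)_K\leq n$ for every $\lambda\in F'$, and because $d(x)_K=\infty_K$ the segment $\mu:=x(p,p+(0,n))$ is well-defined and belongs to $x(p)\Lambda^{(0,n)}$. Exhaustiveness of $F'$ gives some $\lambda\in F'$ with $\MCE(\lambda,\mu)\neq\emptyset$, and since $d(\lambda)\leq d(\mu)$ the unique element of $\MCE(\lambda,\mu)$ must be $\mu$ itself, so $\mu=\lambda\alpha$ for some $\alpha$, and $\lambda$ is an initial segment of $x$ at $p$. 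Hence $x$ is $\F$-compatible.

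For the reverse inclusion I would argue by contradiction. Suppose $x\in\partial(\Lambda;\F)$ and $d(x)_i<\infty$ for some $i\in K$. Choose $p\leq d(x)$ with $p_i=d(x)_i$ (for instance $p_j=0$ for $j\neq i$), set $v=x(p)$, and note that $v\Lambda^{e_i}\in\E\subset\F$ is finite exhaustive with range $v$. Applying $\F$-compatibility at $p$ to this set would force some $e\in v\Lambda^{e_i}$ with $p+e_i\leq d(x)$, contradicting $p_i=d(x)_i$. Hence $d(x)_i=\infty$ for all $i\in K$, so $x\in\partial^K\!\Lambda$.

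For the representations, I would reindex the basis $\{\xi_{m,x}:m\in\NN^J,\;x\in\Lambda^{m,\infty_K}\}$ of $\ell^2(\partial^K\!\Lambda)=\bigoplus_m\ell^2(\Lambda^{m,\infty_K})$ simply as $\{\xi_x:x\in\partial^K\!\Lambda\}$, with the rule $m=d(x)_J$ implicit. The formula for $T_\lambda$ in Proposition~\ref{TCK} then reads $T_\lambda\xi_x=\xi_{\lambda x}$ when $s(\lambda)=r(x)$, and $0$ otherwise, which is exactly the formula defining Sims' $\F$-compatible boundary-path representation in \cite[\S4]{S1} once his boundary-path space is identified with $\partial^K\!\Lambda$ via the set equality above. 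The main obstacle I anticipate is translating between Sims' conventions, which handle $\F$-compatibility and the boundary-path action abstractly via the satiation machinery, and the concrete semi-infinite-path picture used here; once the description of $\F$ from the preceding proposition is in hand, both halves of the statement reduce to straightforward verifications.
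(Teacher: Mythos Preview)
Your proposal is correct and follows essentially the same approach as the paper's proof. Both directions of the set equality use the same ideas: for $\partial(\Lambda;\F)\subset\partial^K\!\Lambda$ you and the paper both apply the boundary-path condition to the set $x(p)\Lambda^{e_i}\in\E$ at a point $p$ with $p_i=d(x)_i$ to force $d(x)_i=\infty$; for the converse, both arguments pass to a finite exhaustive subset $F'\subset d^{-1}(\N^K)$ and observe that some $\lambda\in F'$ is an initial segment of $x$ at $p$---the paper asserts this last step without justification, while you spell it out via the common-extension trick with a sufficiently long $K$-segment. The identification of the two representations is handled identically in both.
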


\begin{proof}
Suppose $x:\Omega_{k,m}\to \Lambda$ is in $\partial(\Lambda;\F)$. The collection $\F$ contains every $v\Lambda^{e_i}$ with $i\in K$. We can apply the definition of boundary path in \cite[Definition~4.3]{S1} with $n=m$ and $E=v\Lambda^{e_i}$ only if $m+e_i\leq m$. Hence $m_i=\infty$ for all $i\in K$, and $\partial(\Lambda;\F)\subset \partial^K\!\Lambda$.

Conversely, suppose $x\in \Lambda^{p,\infty_K}$ for some $p\in \N^J$, that $n\leq (p,\infty_K)$ and that $G\in \F$ has $r(G)=x(n)$. Then $G$ contains a finite exhaustive set $F$ lying in $d^{-1}(\N^K)$, and there exists $\lambda\in F$ with $\lambda=x(n,n+d(\lambda))$. Hence $x$ is an $\F$-compatible boundary path.

Since $\ell^2(\partial(\Lambda;\F))=\ell^2(\partial^K\!\Lambda)$, the formula in Definition~4.5 of \cite{S1} shows that the boundary-path family $S_{\F}$ on $\ell^2(\partial(\Lambda;\F))$ is the family in Proposition~\ref{TCK}.
\end{proof}

\section{A groupoid model for the Toeplitz algebra of a higher-rank graph}\label{App:nesh}

In \cite{N}, Neshveyev studies KMS states of $C^*$-algebras of \'etale groupoids with dynamics arising 
from continuous $\RR$-valued cocycles on the groupoids. Both the Toeplitz algebra and the 
Cuntz-Krieger algebra of a $k$-graph admit such groupoid models. Here we claim that the measure $\mu$ constructed in Corollary~\ref{cor:measure} is the measure on the unit space of the groupoid of the Toeplitz algebra predicted by \cite[Theorem~1.3]{N}.

\subsection*{The groupoid of the Toeplitz algebra of a $k$-graph} We start by using Yeend's work \cite{Y} to show that there is a groupoid model for the Toeplitz algebra of a $k$-graph\footnote{There is an alternative groupoid model based on an inverse-semigroup action in \cite{FMY}.}. With the exception of the two appendices, we have worked with finite $k$-graphs only. Here we consider  a row-finite $k$-graph since the extra generality does not cause any technical problems and may be of independent interest.

Let $\Lambda$ be a row-finite $k$-graph. By Theorems 3.1~and~3.2 of \cite{W}, the
collection of $Z(\lambda \setminus G)$ defined at \eqref{defn-cylindersets} form a basis for a locally compact  Hausdorff topology on the path space $W_\Lambda$. For $x\in W_\Lambda$ and  $n \le
d(x)$, we define $\sigma^n(x) \in \Lambda^{d(x)-n}$ by $\sigma^n(x)(p,q) = x(p+n, q+n)$; this gives a partially defined shift map $\sigma$ on $W_\Lambda$.
The set
\begin{equation*}
\Gg := \{(x,p-q,y) : x,y \in W_\Lambda, p \le d(x), q \le d(y)
    \text{ and } \sigma^p(x) = \sigma^q(y)\}
\end{equation*}
is a groupoid with range and source maps $r(x,g,y) = (x,0,x)$ and $s(x, g, y) = (y,0,y)$, partially defined multiplication  $(x,g,y)(y,h,z) = (x,
g+h, z)$ and inverses $(x,g,y)^{-1} = (y,-g,x)$.  

For $\lambda,\eta \in \Lambda$ with $s(\lambda) = s(\eta)$, we define
\begin{align*}
Z(\lambda*\eta)
    = \{(x, d(\lambda) - d(\eta), y) \in \Gg : x \in Z(\lambda), y \in Z(\eta)
                    \text{ and } \sigma^{d(\lambda)}(x) = \sigma^{d(\eta)}(y)\}.
\end{align*}
For finite $G \subset s(\lambda)\Lambda$, we set
\[\textstyle
Z(\lambda*\eta \setminus G) = Z(\lambda * \eta) \setminus
                            \big(\bigcup_{\tau \in G} Z(\lambda\tau * \eta\tau)\big).
\]
We identify the unit space $\Gg^{(0)}$ with $W_\Lambda$ via $(x,0,x)\mapsto x$. The corresponding identification of  $C_0( W_\Lambda)$ with $C_0(\Gg^0)$ sends  $\chi_{Z(\lambda)}$ to $\chi_{Z(\lambda*\lambda)}$

The following proposition follows  from Yeend's results for topological 
higher-rank graphs \cite{Y}. 

\begin{prop}\label{prop:gpd model} Let $\Lambda$ be a row-finite $k$-graph.
The sets $Z(\lambda*\eta \setminus G)$ form a basis of compact open sets for a
locally compact second-countable Hausdorff topology on $\Gg$ under which it is an \'etale topological groupoid. The set $\Lambda^\infty$ is a closed $\Gg$-invariant
subset of $\Gg^{(0)}$. There is an isomorphism $\pi : \Tt C^*(\Lambda) \to C^*(\Gg)$
such that $\pi(t_\lambda) = \chi_{Z(\lambda * s(\lambda))}$, and $\pi$ restricts to an isomorphism of $\clsp\{t_\lambda t_\lambda^*:\lambda\in\Lambda\}$ onto $C_0(\Gg^0)$.

 Let $q_\infty : C^*(\Gg)
\to C^*(\Gg|_{\Lambda^\infty})$ be the quotient map induced by restriction of functions.
Then $q_\infty \circ \pi$ factors through an isomorphism $\tilde\pi : C^*(\Lambda) \to
C^*(\Gg|_{\Lambda^\infty})$.
\end{prop}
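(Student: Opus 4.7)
The plan is to specialise Yeend's construction from \cite{Y} of a groupoid associated to a topological higher-rank graph to the case where $\Lambda$ carries the discrete topology, identify the resulting object with $\Gg$, and then build the isomorphism $\pi$ using the universal property of $\Tt C^*(\Lambda)$ together with a gauge-invariant uniqueness argument; the Cuntz-Krieger quotient will be handled by a second appeal to gauge-invariant uniqueness applied to the restricted groupoid.

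First I would record that, regarded as a discrete topological $k$-graph, $\Lambda$ is compactly aligned and that Yeend's boundary-path space coincides with $W_\Lambda$; the basic topological properties of $W_\Lambda$ come from \cite[Theorems~3.1--3.2]{W}, and Yeend's framework endows $\Gg$ with the topology generated by $\{Z(\lambda*\eta\setminus G)\}$. Each $Z(\lambda*\eta)$ is a bisection because $r$ restricts to a homeomorphism onto $Z(\lambda)$ with inverse $x\mapsto(x,d(\lambda)-d(\eta),\eta\sigma^{d(\lambda)}(x))$, yielding the \'etale property. The basic opens $Z(\lambda*\eta\setminus G)$ are compact because this homeomorphism carries them onto the compact sets $Z(\lambda\setminus G)$ of Lemma~\ref{oversight}, giving local compactness; second countability follows from countability of $\Lambda$, and Hausdorffness is inherited from $W_\Lambda$. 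For the closedness of $\Lambda^\infty$, if $x\in W_\Lambda$ has $d(x)_i<\infty$ for some $i$, then $s(x)\Lambda^{e_i}$ is finite by row-finiteness, and $Z(x(0,d(x))\setminus s(x)\Lambda^{e_i})$ is a basic open neighbourhood of $x$ disjoint from $\Lambda^\infty$; invariance under $\Gg$ is immediate, since if $(x,n,y)\in\Gg$ is witnessed by $\sigma^p(x)=\sigma^q(y)$ with $x\in\Lambda^\infty$, then $d(\sigma^p(x))=\infty_k$, forcing $d(y)=\infty_k$.

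To build $\pi$, I would set $T_\lambda:=\chi_{Z(\lambda*s(\lambda))}\in C_c(\Gg)$ and verify the Toeplitz-Cuntz-Krieger relations by direct convolution computations on the \'etale groupoid; the main inputs are $T_\lambda T_\mu=T_{\lambda\mu}$ when $s(\lambda)=r(\mu)$, and
\[
T_\mu^*T_\nu=\sum_{(\alpha,\beta)\in\Lambda^{\min}(\mu,\nu)}T_\alpha T_\beta^*,
\]
which comes from the decomposition $Z(\mu*\mu)\cap Z(\nu*\nu)=\bigsqcup_{(\alpha,\beta)\in\Lambda^{\min}(\mu,\nu)}Z(\mu\alpha*\mu\alpha)$. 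The remaining relations then follow as in Remark~\ref{aidanvworld}, and the universal property produces $\pi$. Surjectivity of $\pi$ is clear since each indicator $\chi_{Z(\lambda*\eta\setminus G)}$ is a finite polynomial in the generators, and such indicators span a dense subalgebra of $C_c(\Gg)$. For injectivity I would apply a gauge-invariant uniqueness theorem: the continuous cocycle $c(x,n,y)=n$ on $\Gg$ induces a $\TT^k$-action on $C^*(\Gg)$ for which $\pi$ is equivariant, the faithful conditional expectation onto the fixed-point algebra is given by restriction to $c^{-1}(0)$, and it suffices to show that $\pi$ is faithful on $\clsp\{t_\lambda t_\lambda^*\}$. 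Since $\pi(t_\lambda t_\lambda^*)=\chi_{Z(\lambda)}$ and the cylinders separate points of $W_\Lambda$, this is immediate, and the same identification gives the restriction to the diagonal as an isomorphism onto $C_0(\Gg^{(0)})$.

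Finally, for the Cuntz-Krieger quotient, the factorisation property gives $Z(v)\cap\Lambda^\infty=\bigsqcup_{e\in v\Lambda^n}(Z(e)\cap\Lambda^\infty)$ for every $v\in\Lambda^0$ and $n\in\NN^k$, so $(q_\infty\circ\pi)(t_v-\sum_{e\in v\Lambda^n}t_et_e^*)=0$, and $q_\infty\circ\pi$ descends to a surjection $\tilde\pi:C^*(\Lambda)\to C^*(\Gg|_{\Lambda^\infty})$. Injectivity of $\tilde\pi$ follows from the gauge-invariant uniqueness theorem for $C^*(\Lambda)$, applied after checking that $\tilde\pi(p_v)=\chi_{Z(v)\cap\Lambda^\infty}\neq 0$. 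The main obstacle is the injectivity of $\pi$ on $\Tt C^*(\Lambda)$: to apply the gauge-invariant uniqueness theorem one must match the two conditional expectations carefully and verify that the induced map on the cores is isometric, which typically requires an inductive-limit description of $\Tt C^*(\Lambda)^\gamma$ or a direct density argument using that finite linear combinations of the $t_\mu t_\nu^*$ with $d(\mu)=d(\nu)$ are dense in the core.
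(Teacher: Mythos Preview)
Your overall architecture matches the paper's: both invoke Yeend's results for the topology and \'etale structure of $\Gg$, both verify closedness and invariance of $\Lambda^\infty$ directly, and both construct $\pi$ via the universal property and then argue surjectivity by exhibiting each $\chi_{Z(\lambda*\eta\setminus G)}$ as a polynomial in the generators.

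The substantive divergence is in the injectivity of $\pi$. The paper does not use a gauge-invariant uniqueness argument at all: it applies \cite[Theorem~8.1]{RS1}, the uniqueness theorem for the Toeplitz algebra of a product system of graphs, whose hypothesis is simply that each $\pi(t_v)\not=0$ and each finite product $\pi\big(\prod_{\lambda\in G}(t_v-t_\lambda t_\lambda^*)\big)$ is nonzero for $G\subset v\Lambda\setminus\{v\}$. These are immediate since the images are $\chi_{Z(v)}$ and $\chi_{Z(v\setminus G)}$, and the path $v$ itself lies in $Z(v\setminus G)$. Your route instead tries to reduce to faithfulness on the diagonal $\clsp\{t_\lambda t_\lambda^*\}$, but that reduction is not valid as stated: the conditional expectation coming from the gauge action lands in the core $\clsp\{t_\mu t_\nu^*:d(\mu)=d(\nu)\}$, which on the groupoid side corresponds to $c^{-1}(0)$ and is strictly larger than $\Gg^{(0)}$. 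You acknowledge this at the end, but the gap is real: showing $\pi$ is isometric on the core is precisely the nontrivial content, and the inductive-limit argument you allude to is not supplied. The paper's citation of \cite{RS1} sidesteps all of this.

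For $\tilde\pi$, the paper again avoids a fresh gauge-invariant uniqueness argument by observing that $\tilde\pi$ coincides with the homomorphism of \cite[Corollary~3.5(i)]{KP}, which is already known to be an isomorphism. Your route via gauge-invariant uniqueness for $C^*(\Lambda)$ is fine in principle, but note that the standard Kumjian--Pask version requires $\Lambda$ to have no sources, so you would need to either add that hypothesis or appeal to a more general form. A minor point: your compactness argument for $Z(\lambda*\eta\setminus G)$ via Lemma~\ref{oversight} is stated in the paper only for finite $\Lambda$; the paper itself cites \cite[Proposition~3.15]{Y} for the row-finite case.
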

\begin{proof}
Since $\Lambda$ is row-finite, it is finitely aligned as in \cite{RSY}.
Thus, as a topological $k$-graph with the discrete topology, it is compactly aligned by
\cite[Remark~2.4]{Y}. By Proposition~3.6 of \cite{Y},  the sets $Z(\lambda*\eta \setminus G)$ form a basis  for a second-countable Hausdorff topology on $\Gg$.  Since $\Lambda$ is compactly aligned, it follows from \cite[Proposition~3.15]{Y} that the $Z(\lambda * \eta)$ are compact, and then that the topology is locally compact. The closed subsets $Z(\lambda * \eta \setminus G)
\subset Z(\lambda * \eta)$ are therefore also compact. By \cite[Theorem~3.16]{Y}, $\Gg$
is an \'etale topological groupoid.

 To see that $\Lambda^\infty$ is closed, we show that the complement is open.   Let $x \in
W_\Lambda \setminus \Lambda^\infty$. Choose $n \in \NN^k$ such that $n_i > d(x)_i$ for
some $i$. Since $\Lambda$ is row-finite, $x(0)\Lambda^n$ is finite and $Z(x(0) \setminus x(0)\Lambda^n) \subset W_\Lambda \setminus
\Lambda^\infty$ is an open neighbourhood of $x$. Thus $\Lambda^\infty$ is closed. 

To see that $\Lambda^\infty$ is $\Gg$-invariant, let $x\in \Lambda^\infty$.  Then 
\[
r(s^{-1}(\{x\}))=\big\{y\in W_\Lambda:\text{there are $m,n\in\NN^k$ with $m\leq d(y)$ and $\sigma^m(y)=\sigma^n(x)$}\big\}.
\]
But each such $\sigma^m(y)=\sigma^n(x)$ is in $\Lambda^\infty$, and hence $y\in\Lambda^\infty$ as well. Thus $r(s^{-1}(\{x\}))\subset \Lambda^\infty$, and so $\Lambda^\infty$ is $\Gg$-invariant.

A straightforward calculation shows that $\{\chi_{Z(\lambda * s(\lambda))}:\lambda\in\Lambda\}$ is a Toeplitz-Cuntz-Krieger $\Lambda$-family. 
Thus there is a homomorphism $\pi : \Tt C^*(\Lambda) \to C^*(\Gg)$ such that
$\pi(t_\lambda) = \chi_{Z(\lambda * s(\lambda))}$. It is easy to verify that $\pi(t_\lambda t_\lambda^*) = \chi_{Z(\lambda)}$.

Each 
\[\chi_{Z(\lambda * \eta \setminus
G)} = \pi\Big(t_\lambda \big(\prod_{\tau \in G} (t_{s(\lambda)} - t_\tau t^*_\tau)\big) t^*_\eta\Big)\] belongs to the range of $\pi$, so the Stone-Weierstrass theorem implies that
$\pi$ has dense range and hence is surjective. Since $\Gg$ is \'etale, $C_0(\Gg^0)$ embeds in $C^*(\Gg)$. Thus each $\pi(t_v) = \chi_{Z(v)}$ is nonzero, and for a finite $G
\subset v\Lambda \setminus \{v\}$, we have 
\[
\pi\Big(\prod_{\lambda \in G} (t_v - t_\lambda t^*_\lambda)\Big) = \chi_{Z(v \setminus G)}  \not= 0.
\] 
Now \cite[Theorem~8.1]{RS1}, applied to the product system $E_\Lambda$ of graphs as defined in \cite[Example~3.1]{RS1},  implies that $\pi$ is injective. Thus $\pi : \Tt C^*(\Lambda) \to C^*(\Gg)$ is an isomorphism.

For $v \in \Lambda^0$ and $n \in \NN^k$, we have $q_\infty \circ \pi(t_v -
\sum_{\lambda \in v\Lambda^n} t_\lambda t^*_\lambda) = \chi_{Z(v \setminus v\Lambda^n)
\cap \Lambda^\infty} = 0$. So $q_\infty \circ \pi$ factors through  a homomorphism $\tilde\pi : C^*(\Lambda) \to C^*(\Gg|_{\Lambda^\infty})$ such that $\tilde\pi (s_\lambda)$ is the characteristic function $\chi_{Z(\lambda,s(\lambda))}$. This is precisely the homomorphism of \cite[Corollary~3.5(i)]{KP}, and hence  is an
isomorphism.
\end{proof}

\subsection*{The measure of Corollary~\ref{cor:measure} and Neshveyev's theorem}
Let $r\in (0,\infty)^k$. There is a locally constant cocycle $c : \Gg \to \RR$ given by $c(x,
n, y) = r \cdot n$. This cocycle induces a dynamics $\alpha^c: \RR \to \Aut C^*(\Gg)$
 such that $\alpha^c_t(f)(x,n,y) = e^{itc(x,n,y)} f(x,n,y)$ for $f\in C_c(\Gg)$. The isomorphism $\pi : \Tt C^*(\Lambda) \to C^*(\Gg)$  of Proposition~\ref{prop:gpd model} intertwines $\alpha^c$ and the dynamics $\alpha^r$ we have been using.

For $x\in \Gg^{(0)}$, write $\Gg_x^x$ for the stability subgroup $\{(x,n,x)\in \Gg:n\in\ZZ^k\}$ and $\Gg_x$ for the subset $\{(y,n,x)\in \Gg: y\in W_\Lambda, n\in\ZZ^k\}$ of $\Gg$. Theorem~1.3 of \cite{N} describes the KMS$_1$ states of $(C^*(\Gg),\alpha^c)$ in terms of pairs $(\mu',
\psi)$ consisting of a quasi-invariant probability measure $\mu'$ on $\Gg^{(0)}$ with Radon-Nikodym
cocycle $e^{-  c}$ and a $\mu'$-measurable field $\psi = (\psi_x)_{x \in \Gg^{(0)}}$ of 
states $\psi_x : C^*(\Gg^x_x) \to \CC$ such that for $\mu'$-almost all $x \in \Gg^0$ 
we have
\begin{equation*}
\psi_x(u_g) = \psi_{r(h)}(u_{hgh^{-1}})
    \text{ for all $g \in \Gg^x_x$ and $h \in \Gg_x$.}
\end{equation*}

Now let $\phi$ be the $\KMS_1$ state of $\Tt C^*(\Lambda)$ from  Theorem~\ref{main_thm}(c) and $\mu$ the measure from Corollary~\ref{cor:measure}. By \cite[Theorem~1.3]{N}, $\phi\circ\pi^{-1}$ is implemented by a unique pair $(\mu',\psi)$ as above. Burrowing into the proof of \cite[Theorem~1.1]{N} shows that $\mu'$ is the measure on $\Gg^{(0)}$ implementing the functional $\phi\circ\pi^{-1}\vert_{C(\Gg^{(0)})}$. For $\lambda\in\Lambda$ we have
\[\mu'(Z(\lambda))=\phi\circ\pi^{-1}(\chi_{Z(\lambda)})=\phi(t_\lambda t_\lambda^*)=\mu(Z(\lambda))\]
by \eqref{eq:cor_measure2}. Thus $\mu'$ is the extension to $W_\Lambda$ of the measure $\mu$ on $\partial^K\!\Lambda$.

Henceforth we view $\mu$ as a measure on $W_\Lambda$, and then Lemma~\ref{lem:measure support} applies to this $\mu$.
The lemma says that the  quasi-invariance of the  measure $\mu$ has consequences for the dynamics $\alpha^r$ and forces it to have support in $\partial^K\!\Lambda$ where $K=\{l: r_l=\ln\rho(A_l)\}$.

\begin{lem}\label{lem:measure support} Suppose that $\Lambda$ is a finite coordinatewise irreducible  $k$-graph.
Suppose that $\mu$ is a nonzero quasi-invariant probability measure on $\Gg^{(0)}=W_\Lambda$ with
Radon-Nikodym cocycle $e^{-c}$. 
\begin{enumerate}
\item\label{lem:measure support1} Then $r \ge \ln\rho(\Lambda):=(\ln\rho(A_1),\dots,\ln\rho(A_k))$. 
\item\label{lem:measure support2} Let $l\in\{1,\dots,k\}$. If 
$\mu\big(\bigcup_{\{n:n_l = \infty\}} \Lambda^n) \not= 0$, then $r_l = \ln\rho(A_l)$. In
particular, if $\mu(\Lambda^\infty) \not= 0$ then $r = \ln\rho(\Lambda)$.
\end{enumerate} 
\end{lem}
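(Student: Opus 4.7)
The plan is to extract a subinvariance relation for the vector $m\in[0,\infty)^{\Lambda^0}$ defined by $m_v:=\mu(Z(v))$, using the cocycle $c(x,n,y)=r\cdot n$ together with quasi-invariance applied to the natural bisections coming from color-$l$ edges. Concretely, for each $l\in\{1,\dots,k\}$ and each $e\in v\Lambda^{e_l}w$, the set $U_e:=Z(e*s(e))$ is an open bisection in $\Gg$ with $r(U_e)=Z(e)$, $s(U_e)=Z(w)$, and $c\equiv r_l$ on $U_e$. The quasi-invariance condition $d(r^*\mu)/d(s^*\mu)=e^{-c}$, applied to any Borel $V\subset U_e$, gives $\mu(r(V))=e^{-r_l}\mu(s(V))$. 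Taking $V=U_e$ yields
\[
\mu(Z(e))=e^{-r_l}\mu(Z(w))\quad\text{for every }e\in v\Lambda^{e_l}w.
\]

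Since the sets $\{Z(e):e\in v\Lambda^{e_l}\}$ are pairwise disjoint subsets of $Z(v)$, summing over $e$ gives
\[
m_v=\mu(Z(v))\ge\sum_{e\in v\Lambda^{e_l}}\mu(Z(e))=e^{-r_l}\sum_{w\in\Lambda^0}A_l(v,w)m_w=e^{-r_l}(A_lm)_v,
\]
so $A_lm\le e^{r_l}m$. Because $\mu$ is a probability measure and $W_\Lambda=\bigcup_{v\in\Lambda^0}Z(v)$, we have $\sum_v m_v=1$, so $m\ne 0$. Coordinatewise irreducibility of $\Lambda$ means $A_l$ is irreducible, and Seneta's subinvariance theorem \cite[Theorem~1.6]{Sen} gives $e^{r_l}\ge\rho(A_l)$, which is part~\eqref{lem:measure support1}.

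For~\eqref{lem:measure support2}, fix $l$ and write $B_l:=\bigcup_{\{n:n_l=\infty\}}\Lambda^n$ and $m'_v:=\mu(B_l\cap Z(v))$. The key observation is that every $x\in B_l$ with $r(x)=v$ satisfies $d(x)_l=\infty\ge 1$, so $x$ has a unique initial color-$l$ edge $x(0,e_l)\in v\Lambda^{e_l}$; hence
\[
B_l\cap Z(v)=\bigsqcup_{e\in v\Lambda^{e_l}}\bigl(B_l\cap Z(e)\bigr)\quad\text{(disjoint).}
\]
Moreover the shift $\sigma^{e_l}$ preserves the property of having $n_l=\infty$, so applying the quasi-invariance relation above to $V=U_e\cap r^{-1}(B_l\cap Z(e))$ (whose $s$-image is $B_l\cap Z(s(e))$) gives $\mu(B_l\cap Z(e))=e^{-r_l}\mu(B_l\cap Z(s(e)))$. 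Summing yields the equality $A_lm'=e^{r_l}m'$. If $\mu(B_l)\ne 0$, then $m'\ne 0$, and since $A_l$ is irreducible, Perron--Frobenius theory forces $e^{r_l}=\rho(A_l)$, i.e.\ $r_l=\ln\rho(A_l)$. The "in particular" assertion is immediate since $\Lambda^\infty\subset B_l$ for every $l$.

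The only subtle point, and the step to handle carefully, is fixing the Radon--Nikodym convention (between $e^{-c}$ and $e^{c}$, and between $r^*\mu$ and $s^*\mu$) used in Neshveyev's framework, so that the bisection argument produces $\mu(Z(e))=e^{-r_l}\mu(Z(s(e)))$ with the correct sign of the exponent; once that is pinned down, both~\eqref{lem:measure support1} and~\eqref{lem:measure support2} fall out of the same Perron--Frobenius input that drove Proposition~\ref{KMS_combine}.
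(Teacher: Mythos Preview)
Your proof is correct and follows essentially the same route as the paper: both use the bisections $Z(e*s(e))$ and quasi-invariance to obtain $\mu(Z(e))=e^{-r_l}\mu(Z(s(e)))$, deduce the subinvariance inequality $A_l m\le e^{r_l}m$ for $m_v=\mu(Z(v))$, and then repeat the computation on the restricted vector $m'_v=\mu(B_l\cap Z(v))$ to get equality and invoke Perron--Frobenius. The only cosmetic difference is that the paper concludes part~\eqref{lem:measure support2} by noting $e^{r_l}$ is an eigenvalue (hence $\le\rho(A_l)$) and combining with part~\eqref{lem:measure support1}, while you appeal directly to the uniqueness of the nonnegative eigenvector; both are standard consequences of irreducibility.
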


\begin{proof}
Let $v\in\Lambda^0$ and $\lambda\in v\Lambda^{e_i}$.  Then $Z(\lambda)\subset Z(v)$. Since $Z(\lambda* s(\lambda))$ is a bisection of $\Gg$ with $r\big(Z(\lambda* s(\lambda))\big)=Z(\lambda)$ and $s\big(Z(\lambda* s(\lambda))\big)=Z(s(\lambda))$, the quasi-invariance of $\mu$ gives
\[
\mu(Z(\lambda))=e^{- d(\lambda)\cdot r}\mu(Z(s(\lambda)))=e^{- r_i}\mu(Z(s(\lambda))).
\]
Let $i\in\{1,\dots,k\}$. Then
\begin{align}\label{eq:quasi-invariance}
\mu(Z(v))
    &\ge \mu\big(\textstyle{\bigsqcup_{w\in\Lambda^0}\bigsqcup_{\lambda \in v\Lambda^{e_i}w}} Z(\lambda)\big)=\sum_{w\in\Lambda^0}\sum_{\lambda \in v\Lambda^{e_i}w} \mu(Z(\lambda))\notag\\
    & = \sum_{w\in\Lambda^0} e^{- r_i} A_i(v,w) \mu(Z(w)).
\end{align}
Set $m^+ :=(\mu(Z(v))) \in
[0,\infty)^{\Lambda^0}$. Then \eqref{eq:quasi-invariance} says that $m^+$ satisfies $e^{ r_i} m^+ \ge A_im^+$.
Since $m^+ \in [0,\infty)^{\Lambda^0}$ is nonzero and $A_i$ is irreducible, the subinvariance theorem \cite[Theorem~1.6]{Sen} implies that  $e^{ r_i} \ge \rho(A_i)$. Thus  $ r \ge \ln\rho(\Lambda)$, giving \eqref{lem:measure support1}.

For \eqref{lem:measure support2}, suppose  that $X := \bigcup_{\{n:n_l = \infty\}} \Lambda^n$ has nonzero measure.  Set 
\[
m := \big(\mu(Z(v) \cap X)\big)_{v \in \Lambda^0}.
\]
Since $\mu(X)\neq 0$, there exists $u\in\Lambda^0$ such that $m_u>0$, and  hence $m\gneqq0$. Let $v\in\Lambda^0$. Since $x\in Z(v)\cap X$ implies $d(x)_l=\infty$ we have 
\begin{align*}
\mu(Z(v)\cap X)
    &=\mu\big(\textstyle{\bigsqcup_{w\in\Lambda^0}\bigsqcup_{\lambda \in v\Lambda^{e_l}w}} Z(\lambda)\cap X\big) = \sum_{w\in\Lambda^0} e^{- r_l} A_l(v,w) \mu(Z(w)\cap X).
\end{align*}
Thus  $e^{r_l}m = A_lm$. By definition of spectral radius,  $e^{ r_l}
\leq \rho(A_l)$. Now \eqref{lem:measure support1}  gives $e^{ r_l}
= \rho(A_l)$ and $r_l= \ln\rho(A_l)$.

Next observe that $\Lambda^\infty = \bigcap_i\big(\bigcup_{n_i =
\infty} \Lambda^n\big)$. If $ r_l>\ln\rho(A_l)$ then $\mu\big(\bigcup_{n_l = \infty} \Lambda^n) = 0$, and hence $\mu(\Lambda^\infty)=0$ as well. This gives \eqref{lem:measure support2}.
\end{proof}

\end{document}